\newtheorem{theorem}{Theorem}[section]
\newtheorem{lemma}[theorem]{Lemma}
\newenvironment{corollary}[1][]
 {
  \if\relax\detokenize{#1}\relax
  \else
    \ifcsname #1-used\endcsname
      \expandafter\xdef\csname #1-used\endcsname{\the\numexpr\csname #1-used\endcsname+1}%
    \else
      \expandafter\gdef\csname #1-used\endcsname{1}%
    \fi
    \renewcommand{\thecorollaryinner}{\ref{#1}.\csname #1-used\endcsname}%
  \fi
  \corollaryinner
 }
 {\endcorollaryinner}
\newcommand{\powerset}{\ensuremath{\mathcal{P}}\xspace}
\newcommand{\integerpoints}{\ensuremath{\mathcal{I}_\mathcal{X}}\xspace}
\newcommand{\goodcutpoints}{\ensuremath{\mathcal{GC}_\mathcal{X}}\xspace}
\newcommand{\intsupcutpoints}[1]{\ensuremath{\mathcal{ISC}^{#1}_\mathcal{X}}\xspace}
\newcommand{\objparalcutpoints}[1]{\ensuremath{\mathcal{OPC}^{#1}_\mathcal{X}}\xspace}
\newcommand{\goodcut}{\ensuremath{\mathcal{GC}}\xspace}
\newcommand{\objparalcut}[1]{\ensuremath{\mathcal{OPC}^{#1}}\xspace}
\newcommand{\intsupcut}[1]{\ensuremath{\mathcal{ISC}^{#1}}\xspace}
\newcommand{\coefficients}{\ensuremath{\boldsymbol{\alpha}}\xspace}
\newcommand{\objective}{\ensuremath{\mathbf{c}}\xspace}
\newcommand{\mip}{\ensuremath{\mathsf{P}(a,d)}\xspace}
\newcommand{\mipobjective}{\ensuremath{\mathbf{c}_{\mip}}\xspace}
\newcommand{\maxa}[1]{\ensuremath{{\normalfont \texttt{a}_{\text{max}}}(#1)}\xspace}
\newcommand{\lambdalb}[2]{\ensuremath{\lambda_{lb}(#1,#2)}\xspace}
\newcommand{\lambdaub}[2]{\ensuremath{\lambda_{ub}(#1,#2)}\xspace}
\newcommand{\region}{\ensuremath{\mathcal{R}_{\goodcut}}\xspace}
\newcommand{\regionfunc}[2]{\ensuremath{\texttt{r}_{\goodcut}(#1,#2)}\xspace}
\newcommand{\regionfuncnoargs}{\ensuremath{\texttt{r}_{\goodcut}}\xspace}
\newcommand{\interval}[2]{\ensuremath{\mathbf{I}(#1,#2)}\xspace}
\newcommand{\maxaprime}[2]{\ensuremath{\widehat{{\normalfont \texttt{a}_{\text{max}}}}(#1,#2)}\xspace}
\newcommand{\epsinterval}{\ensuremath{\hat{\epsilon}}\xspace}
\newcommand{\cutsadded}{\ensuremath{\mathcal{S}}\xspace}
\newcommand{\cuts}{\ensuremath{\mathcal{S}'}\xspace}
\newcommand{\lambdadis}{\ensuremath{\Lambda}\xspace}
\newcommand{\lambdadisfull}{\ensuremath{\{ \lambda_1, \ldots, \lambda_{|\lambdadis|}\}}\xspace}
\newcommand{\lambdadisgaps}{\ensuremath{\tilde{\Lambda}}\xspace}
\newcommand{\lambdadisgapsfull}{\ensuremath{\{ [0, \lambda_1) \cup (\lambda_1, \lambda_2) \cup \cdots \cup (\lambda_{n-1}, \lambda_n) \cup  (\lambda_{|\lambdadis|}, 1]\}}\xspace}
\newcommand{\reversemaxaprime}[2]{\ensuremath{\tilde{a_{max}}(#1,#2)}\xspace}
\newcommand{\reverseeps}{\ensuremath{\tilde{\epsilon}}\xspace}
\newcommand{\reals}{\ensuremath{\mathbb{R}}\xspace}
\newcommand{\positivereals}{\ensuremath{\mathbb{R}_{\geq 0}}\xspace}
\newcommand{\ints}{\ensuremath{\mathbb{Z}}\xspace}
\newcommand{\naturals}{\ensuremath{\mathbb{N}}\xspace}
\newcommand{\expectation}{\ensuremath{\mathbb{E}}\xspace}
\newcommand{\normal}[2]{\ensuremath{\mathcal{N}_{4}(#1,#2)}\xspace}
\newcommand{\lpoptimal}{\ensuremath{\mathbf{x}^{LP}}\xspace}
\newcommand{\lpoptimali}[1]{\ensuremath{x^{LP}_{#1}}\xspace}
\newcommand{\incumbent}{\ensuremath{\hat{\mathbf{x}}}\xspace}
\newcommand{\isp}[1]{\ensuremath{\texttt{isp}(#1)}\xspace}
\newcommand{\obp}[2]{\ensuremath{\texttt{obp}(#1,#2)}\xspace}
\newcommand{\eff}[2]{\ensuremath{\texttt{eff}(#1,#2)}\xspace}
\newcommand{\dcd}[3]{\ensuremath{\texttt{dcd}(#1,#2,#3)}\xspace}
\newcommand{\effs}[3]{\ensuremath{\texttt{eff'}(#1,#2,#3)}\xspace}
\newcommand{\dcds}[4]{\ensuremath{\texttt{dcd'}(#1,#2,#3,#4)}\xspace}
\def\hat{\mathaccent "705E\relax}
\title{Adaptive Cut Selection in Mixed-Integer Linear Programming}
\author{Mark Turner, Thorsten Koch, Felipe Serrano, Michael Winkler}
\date{\today}
\begin{document}

\author{ \href{https://orcid.org/0000-0001-7270-1496}{\includegraphics[scale=0.06]{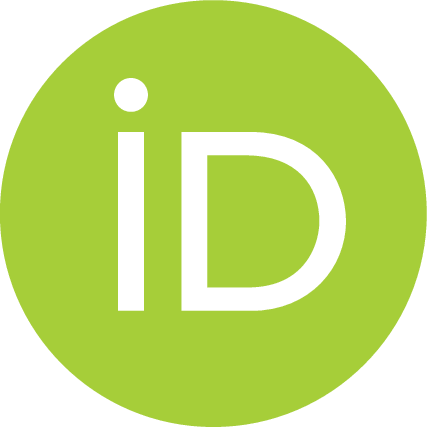}}\hspace{1mm}Mark Turner\thanks{Chair of Software and Algorithms for Discrete Optimization, Institute of Mathematics, Technische Universit{\"a}t Berlin, Straße des 17. Juni 135, 10623 Berlin, Germany}\hspace{2mm}\thanks{Zuse Institute Berlin, Department of Mathematical Optimization, Takustr. 7, 14195 Berlin} \\
	\texttt{turner@zib.de} \\
	\And
	\href{https://orcid.org/0000-0002-1967-0077}{\includegraphics[scale=0.06]{orcid_id_icon.eps}}\hspace{1mm}Thorsten Koch\footnotemark[1]\hspace{2mm}\footnotemark[2]
	\\
	\texttt{koch@zib.de} \\
	\And
	\href{https://orcid.org/0000-0002-7892-3951}{\includegraphics[scale=0.06]{orcid_id_icon.eps}}\hspace{1mm}Felipe Serrano\thanks{I$^2$DAMO GmbH, Englerallee 19, 14195 Berlin, Germany}\hspace{2mm}\footnotemark[2] \\
	\texttt{serrano@zib.de} \\
	\And
	Michael Winkler\thanks{Gurobi GmbH, Ulmenstr. 37-39, 60325 Frankfurt am Main, Germany}\hspace{2mm}\footnotemark[2] \\
	\texttt{winkler@gurobi.com}
}



\maketitle

\begin{abstract}
    Cutting plane selection is a subroutine used in all modern mixed-integer linear programming solvers with the goal of selecting a subset of generated cuts that induce optimal solver performance. These solvers have millions of parameter combinations, and so are excellent candidates for parameter tuning. Cut selection scoring rules are usually weighted sums of different measurements, where the weights are parameters. We present a parametric family of mixed-integer linear programs together with infinitely many family-wide valid cuts. Some of these cuts can induce integer optimal solutions directly after being applied, while others fail to do so even if an infinite amount are applied. We show for a specific cut selection rule, that any finite grid search of the parameter space will always miss all parameter values, which select integer optimal inducing cuts in an infinite amount of our problems. We propose a variation on the design of existing graph convolutional neural networks, adapting them to learn cut selection rule parameters. We present a reinforcement learning framework for selecting cuts, and train our design using said framework over MIPLIB 2017 and a neural network verification data set. Our framework and design show that adaptive cut selection does substantially improve performance over a diverse set of instances, but that finding a single function describing such a rule is difficult. Code for reproducing all experiments is available at \url{https://github.com/Opt-Mucca/Adaptive-Cutsel-MILP}.
\end{abstract}

\section{Introduction}

A Mixed-Integer Linear Program (MILP) is an optimisation problem that is classically defined as:
\begin{align}
    \underset{\mathbf{x}}{\text{argmin}}\{\mathbf{c}^{\intercal}\mathbf{x} \;\; | \;\; \mathbf{A}\mathbf{x} \leq \mathbf{b}, \;\; \mathbf{l} \leq \mathbf{x} \leq \mathbf{u}, \;\; \mathbf{x} \in \mathbb{Z}^{|\mathcal{J}|} \times \mathbb{R}^{n - |\mathcal{J}|} \} \label{eq:mip}
\end{align}
Here, $\mathbf{c} \in \reals^{n}$ is the objective coefficient vector, $\mathbf{A} \in \reals^{m \times n}$ is the constraint matrix, $\mathbf{b} \in \reals^{m}$ is the right hand side constraint vector, $\mathbf{l}, \mathbf{u} \in \reals \cup \{-\infty, \infty\}^{n}$ are the lower and upper variable bound vectors, and $\mathcal{J} \subseteq \{1 , \dots , n\}$ is the set of indices of integer variables. 

One of the main techniques for solving MILPs is the branch-and-cut algorithm, see  \cite{achterberg2007constraint} for an introduction. 
Generating cutting planes, abbreviated as \textit{cuts}, is a major part of this algorithm, and is one of the most powerful techniques for quickly solving MILPs to optimality, see \cite{achterberg2013mixed}. A cut is an inequality that does not remove any feasible solutions of \eqref{eq:mip} when added to the formulation. We restrict ourselves to linear cuts in this paper, and denote a cut as $\coefficients = (\alpha_0, \cdots, \alpha_n) \in \reals^{n+1}$, and denote the set of feasible solutions as \integerpoints, to formally define a cut in \eqref{eq:cut}.
\begin{align}
    \sum_{i=1}^{n}\alpha_i x_i \leq \alpha_0, \;\; \forall x \in \integerpoints, \;\; \text{where} \;\; \mathbf{x}=(x_{1}, \cdots, x_{n}) \label{eq:cut}
\end{align}
The purpose of cuts is to tighten the linear programming (LP) relaxation of \eqref{eq:mip}, where the LP relaxation is obtained by removing all integrality requirements. Commonly, cuts are found that separate the current feasible solution to the LP relaxation, referred to as \lpoptimal, from the tightened relaxation, and for this reason algorithms that find cuts are often called \textit{separators}. This property is defined as follows:
\begin{align}
    \sum_{i=1}^{n}\alpha_i \lpoptimali{i} > \alpha_0, \;\; \text{where} \;\; \lpoptimal = (\lpoptimali{1}, \dots, \lpoptimali{n}) \label{eq:separator}
\end{align}
Within modern MILP solvers, the cut aspect of the branch-and-cut algorithm is divided into cut generation and cut selection subproblems. The goal of cut generation is finding cuts that both tighten the LP relaxation at the current node and improve overall solver performance. The cut selection subproblem is then concerned with deciding which of the generated cuts to add to the formulation \eqref{eq:mip}. That is, given the set of generated cuts $\cuts = \{\coefficients_1, \cdots, \coefficients_{|\cuts|}\}$, find a subset $\cutsadded \subseteq \cuts$ to add to the formulation \eqref{eq:mip}.

We focus on the cut selection subproblem in this paper, where we motivate the need for instance-dependent cut selection rules as opposed to fixed rules, and introduce a reinforcement learning (RL) framework for learning parameters of such a rule. The cut selection subproblem is important, as adding either all or none of the generated cuts to the LP usually results in poor solver performance. This is due to the large computational burden of solving larger LPs at each node when all cuts are added, and the large increase in nodes needed to solve MILPs when no cuts are added. For a summary on MILPs we refer readers to \cite{achterberg2007constraint}, for cutting planes \cite{marchand2002cutting}, for cut selection \cite{wesselmann2012implementing}, and for reinforcement learning \cite{sutton2018reinforcement}.

The rest of the paper is organised as follows. In Section \ref{sec:related_work}, we summarise existing literature on learning cut selection. In Section \ref{sec:theorem}, with an expanded proof in Appendix \ref{sec:proof}, we motivate the need for adaptive cut selection by showing worst case performance of fixed cut selection rules. This section was inspired by \cite{balcan2018learning}, which proved complexity results for fixed branching rules. In Section \ref{sec:scip_cut} we summarise how cut selection is performed in the MILP solver SCIP \cite{scip8}. In Section \ref{sec:rl} we show how to formulate cut selection as a Markov decision process, and phrase cut selection as a reinforcement learning problem. This section was motivated by \cite{gasse2019exact}, which presented variable selection as a Markov decision process as well as experimental results of an imitation learning approach. Finally, in Section \ref{sec:experiments}, we present a thorough computational experiment on learning cut selector parameters that improve root node performance, and study the generalisation of these parameters to the larger solving process. All experiments are done over MIPLIB 2017 \cite{miplib} and a neural network verification data set \cite{nair2020solving} using the MILP solver SCIP version 8.0.1 \cite{scip8}.

\section{Related Work} \label{sec:related_work}

Several authors have proposed cut selection rules and performed several computational studies. The thesis \cite{achterberg2007constraint} presents a linear weighted sum cut selection rule, which drastically reduces solution time to optimality by selecting a reduced number of good cuts. This cut selection rule and algorithm, see \cite{scip8}, can still be considered the basis of what we use in this paper. A more in-depth guide to cutting plane management is given in \cite{wesselmann2012implementing}. Here, a large variety of cut measures are summarised and additional computational results given that show how a reduced subset of good cuts can drastically improve solution time. A further computational study, focusing on cut selection strategies for zero-half cuts, is presented in \cite{zerohalf}. They hypothesise that generating a large amount of cuts followed by heuristic selection strategy is more effective than generating a few deep cuts. Note that the solver and cut selection algorithms used in \cite{achterberg2007constraint}, \cite{wesselmann2012implementing}, and \cite{zerohalf} are different. More recently, \cite{dey2018theoretical} summarises the current state of separators and cut selection in the literature, and poses questions aimed to better develop the science of cut selection. The final remark of the paper ponders whether machine learning can be used to answer some of the posed questions. 

Recently, the intersection of mixed-integer programming and machine learning has received a lot of attention, specifically when it comes to branching, see \cite{balcan2018learning, gasse2019exact, nair2020solving} for examples. To the best of our knowledge, however, there are currently only four publications on the intersection of cut selection and machine learning. Firstly, \cite{balcan2021sample} shows how cut selection parameter spaces can be partitioned into regions, such that the highest ranking cut is invariant to parameter changes within the regions. These results are extended to the class of Chv\`{a}tal-Gomory cuts applied at the root, with a sample complexity guarantee of learning cut selection parameters w.r.t. the resultant branch and bound tree size. Secondly, \cite{tang2020reinforcement} presents a reinforcement learning approach using evolutionary strategies for ranking Gomory cuts via neural networks. They show that their method outperforms standard measures, e.g. max violation, and generalises to larger problem sizes within the same class. Thirdly, \cite{baltean2019scoring} train a neural network to rank linear cuts by expected objective value improvement when applied to a semi-definite relaxation. Their experiments show that substantial computational time can be saved when using their approximation, and that the gap after each cut selection round is very similar to that found when using the true objective value improvement. Most recently, \cite{huang2021learning} proposes a multiple instance learning approach for cut selection. They learn a scoring function parameterised as a neural network, which takes as input an aggregated feature vector over a bag of cuts. Their features are mostly composed of measures normally used to score cuts, e.g. norm violation. Cross entropy loss is used to train their network by labelling the bags of cuts before training starts. 

Our contribution to the literature is three-fold. First, we provide motivation for instance-dependent cut selection by proving the existence of a family of parametric MILPs together with an infinite amount of family-wide valid cuts. Some of these cuts can induce integer optimal solutions directly after being applied, while others fail to do so even if an infinite amount are applied. Using a basic cut selection strategy and a pure cutting plane approach, we show that any finite grid search of the cut selector's parameter space, will miss all parameter values, which select integer optimal inducing cuts in an infinite amount of our instances. An interactive version of this constructive proof is provided in Mathematica\textsuperscript{\tiny\textregistered} \cite{Mathematica}, and instance creation algorithms are provided using SCIP's Python API \cite{scip8,pyscipopt}. Second, we introduce a RL framework for learning instance-dependent cut selection rules, and present results on learning parameters to SCIP's default cut selection rule \cite{achterberg2007constraint} over MIPLIB 2017 \cite{miplib} and a neural network verification data set \cite{nair2020solving}. Third and finally, we implemented a new cut selector plugin, which is available from SCIP 8.0 \cite{scip8}, and enables users to include their own cut selection algorithms in the larger MILP solving process.

\section{Motivating Adaptive Cut Selection} \label{sec:theorem}

This section introduces a simplified cut scoring rule, and discusses how the parameters for such a rule are traditionally set in solvers. A theorem is then introduced that motivates the need for adaptive cut scoring rules, and is proven in Appendix \ref{sec:proof} using a simulated pure cutting plane approach.


Consider the following simplified version of SCIP's default cut scoring rule (see Section \ref{sec:scip_cut} for the default scoring rule):
\begin{align}
    \texttt{simple\_cut\_score}(\lambda, \coefficients, \objective) := \lambda * \isp{\coefficients} + (1- \lambda) * \obp{\coefficients}{\objective}, \quad \lambda \in [0,1], \coefficients \in \reals^{n+1}, \objective \in \reals^{n}
    \label{eq:simple_cut_rule}
\end{align}
Using the general MILP definition given in \eqref{eq:mip}, we define the cut measures integer support (\texttt{isp}) and objective parallelism (\texttt{obp}) as follows:
\begin{align}
    \isp{\coefficients}& := \frac{\sum_{i \in \mathcal{J}} \texttt{nonzero}(\alpha_i)}{\sum_{i=1}^{n} \texttt{nonzero}(\alpha_i)}  \text{, where} \quad \texttt{nonzero}(\alpha_i) = \left \{
    \begin{aligned}
    &0 && \text{if}\ \alpha_i = 0 \\ 
    &1 && \text{otherwise} 
    \end{aligned} \right. \label{eq:intsup_rule} \\
    \obp{\coefficients}{\objective}& := \vert \frac{\sum_{i=1}^{n} \alpha_i c_i}{\sqrt{\sum_{i=1}^{n} \alpha_{i}^{2}}\sqrt{\sum_{i=1}^{n} c_{i}^{2}}} \vert \label{eq:objparal_rule}
\end{align}

We now introduce Theorem \ref{thm:main}, which refers to the $\lambda$ parameter in \eqref{eq:simple_cut_rule}.

\begin{restatable}{theorem}{maintheorem}
Given a finite discretisation of $\lambda$, an infinite family of MILP instances together with an infinite amount of family-wide valid cuts can be constructed. Using a pure cutting plane approach and applying a single cut per selection round, the infinite family of instances do not solve to optimality for any value in the discretisation, but do solve to optimality for an infinite amount of alternative $\lambda$ values.
\label{thm:main}
\end{restatable}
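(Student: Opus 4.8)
The plan is to construct the family explicitly as small MILPs in the plane (or low dimension) where the structure of the cut scoring rule \eqref{eq:simple_cut_rule} is transparent. I would fix a base problem $\mathsf{P}(a,d)$ parametrised by integers $a,d$: the integer variables live in $\ints^2$ (or $\ints \times \reals$), the feasible region \integerpoints\ is arranged so that the LP optimum \lpoptimal\ is fractional, and crucially so that the pool of family-wide valid cuts splits into two qualitatively different types. Type one — call them the "integer-support" cuts — have nonzeros only on integer variables, hence $\isp{\coefficients}=1$, and are designed so that a single application of the right such cut tightens the LP relaxation enough to expose an integer optimum. Type two — the "objective-parallel" cuts — are nearly parallel to \objective, so $\obp{\coefficients}{\objective}$ is close to $1$, but they are weak: they shave the LP optimum only slightly, and iterating them forever (in the pure cutting-plane, one-cut-per-round model) converges to a still-fractional point, so the instance never solves. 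The family is indexed so that there are infinitely many instances of each "difficulty", and the parameters $a,d$ control how close the Type-two cuts get to being objective-parallel.

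Next I would compute, for a generic instance in the family, the \emph{score region structure}: for which $\lambda\in[0,1]$ does \texttt{simple\_cut\_score} rank a Type-one cut above every Type-two cut? Because $\isp$ of a Type-one cut is $1$ and its $\obp$ is some value $p_1<1$, while a Type-two cut has $\isp=s_2<1$ and $\obp=p_2$ with $p_2\to 1$ as the instance parameters grow, the scores are affine in $\lambda$ and the crossover happens at a threshold $\lambda^\ast = \lambda^\ast(a,d)$. The key quantitative claim will be that the set of "good" $\lambda$ values — those for which the solver picks a Type-one cut and hence solves the instance — is a subinterval $(\lambda_{lb}(a,d),\lambda_{ub}(a,d)]$ (or similar), and that by choosing the instance parameters appropriately this interval can be made to shrink around, and to avoid, any prescribed point. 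More precisely: given the finite discretisation \lambdadis, I choose a target $\lambda$ value in one of the open gaps \lambdadisgaps\ and build, for each instance in an infinite subfamily, parameters $a,d$ so that the good interval for that instance contains the target gap value but contains \emph{no} point of \lambdadis. Since there are only finitely many grid points and the gaps are open intervals of positive length, there is room to do this; and since the instance parameters range over an infinite set while the grid is fixed, we get infinitely many such instances simultaneously.

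The remaining pieces are then routine: (i) verify that every cut in the pool is genuinely family-wide valid for \eqref{eq:cut} — this is a direct check that no point of \integerpoints\ is cut off, uniform in the family parameters; (ii) verify the separation property \eqref{eq:separator}, i.e. that each candidate cut does cut off the current \lpoptimal, so the cutting-plane process actually uses them; (iii) verify the dynamics claim for Type-two cuts — that applying them one at a time produces a sequence of LP optima converging to a fractional limit, so "an infinite amount applied" still fails to solve, which I would do by writing down the recurrence for \lpoptimal\ after $k$ rounds and showing its limit is not in $\ints^{|\mathcal J|}\times\reals^{\,n-|\mathcal J|}$; and (iv) verify the one-round claim for Type-one cuts — that after adding one such cut the LP optimum is integer-feasible and optimal. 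I expect the main obstacle to be item (iii) together with the simultaneous-avoidance counting in the previous paragraph: one must design the geometry so that the Type-two cuts are simultaneously (a) objective-parallel enough to beat the Type-one cut for all grid values of $\lambda$, (b) valid, (c) separating at every round, and (d) provably non-terminating, and one must control all four properties by the \emph{same} two integer parameters $a,d$ while keeping the good-$\lambda$ interval in a prescribed gap. Balancing these competing requirements — essentially a careful choice of the right-hand sides and coefficients so the relevant inequalities line up — is where the real work of the proof lies; the rest is bookkeeping with affine functions of $\lambda$.
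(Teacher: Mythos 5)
There is a genuine gap, and it lies exactly at the step you call the ``key quantitative claim.'' With your two-type design --- good cuts having $\isp{\coefficients}=1$ and bad cuts having high $\obp{\coefficients}{\objective}$ but $\isp{\coefficients}=s_2<1$ --- the score difference between a good and a bad cut is affine in $\lambda$ with positive slope $(1-s_2)$, so the set of $\lambda$ values for which the good cut wins is a one-sided interval of the form $[\lambda^\ast(a,d),\,1]$, anchored at $\lambda=1$. You cannot ``shrink this interval around a prescribed point'' inside an interior gap of \lambdadisgaps: at $\lambda=1$ the scoring is pure integer support and your good cut (with maximal \texttt{isp}) always wins, so any discretisation containing $\lambda=1$ (or, more generally, any grid point above $\lambda^\ast$) solves the instance, contradicting the requirement that \emph{no} grid value solves it. Your construction can at best hide the threshold $\lambda^\ast$ in the final gap $(\lambda_{|\lambdadis|},1]$, and only when $1\notin\lambdadis$, so the avoidance argument for an arbitrary finite discretisation does not go through.

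The paper's construction resolves precisely this by offering \emph{three} cuts per round rather than two: a bad cut with maximal integer support ($\isp{}=1$), a bad cut with maximal objective parallelism, and a good cut with intermediate values of both ($\isp{\goodcut}=2/3$). The good cut then loses at \emph{both} extremes of $\lambda$ --- to the integer-support cut for large $\lambda$ and to the objective-parallelism cut for small $\lambda$ --- so the winning set for the good cut is a bounded interval $[\lambdalb{a}{d},\lambdaub{a}{d}]$ strictly inside $(0,1)$. The parameter $a$ (appearing only in the objective) shrinks this interval to a point as $a\to\maxa{d}$, and varying $d$ moves the collapsed point continuously, so by the intermediate value theorem the interval can be made arbitrarily small and placed inside any prescribed gap of the grid, yielding infinitely many instances avoiding every grid value while still admitting infinitely many solving $\lambda$'s. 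The rest of your outline (validity, separation at every round, non-termination of the weak cuts via right-hand sides converging to a limit, invariance of scores across rounds so a bad type keeps being selected) matches the paper's routine lemmas; it is the two-sided sandwiching of the good cut that is the missing idea, and without it the avoidance step fails.
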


The general purpose of Theorem \ref{thm:main} is to motivate the need for instance-dependent parameters in the cut selection subroutine. The typical approach for finding the best choice of cut selector parameters, see previous SCIP computational studies \cite{achterberg2007constraint, scip7, scip8}, is to perform a parameter sweep, most often a grid search. A grid search, however, leaves regions unexplored in the parameter space. In our simplified cut scoring rule \eqref{eq:simple_cut_rule}, we have a single parameter, namely $\lambda$, and these unexplored regions are simply intervals. We define \lambdadis, the set of values in the finite grid search of $\lambda$, as follows:
\begin{align*}
    \lambdadis := \lambdadisfull, \;\; \text{where} \;\; 0 \leq \lambda_i < \lambda_{i+1} \leq 1 \quad \forall i \in \{1, \cdots, n-1\}, \quad |\lambdadis| \in \naturals
\end{align*}
The set of unexplored intervals in the parameter space, denoted \lambdadisgaps, is then defined as:
\begin{align*}
    \lambdadisgaps: = \lambdadisgapsfull
\end{align*}

Our goal is to show that for any \lambdadis we can construct an infinite family of MILP instances from Theorem \ref{thm:main}. Together with our infinite amount of family-wide valid cuts and specific cut selection rule, we will show that the solving process does not finitely terminate for any choice of $\lambda$ outside of an interval $(\lambda_{lb}, \lambda_{ub}) \subset \lambdadisgaps$. In effect, this shows that using the same fixed $\lambda$ value over all problems in a MILP solver could result in incredibly poor performance for many problems. This is somewhat expected, as a fixed parameter cannot be expected to perform well on all possible instances, and moreover, cut selection is only a small subroutine in the much larger MILP solving process. Additionally, the instance space of MILPs is non-uniform, and good performance over certain problems may be highly desirable as they occur more frequently in practice. Nevertheless, Theorem \ref{thm:main} provides important motivation for adaptive cut selection. See Appendix \ref{sec:proof} for a complete proof.

\section{Cut Selection in SCIP}
\label{sec:scip_cut}
Until now we have motivated adaptive cut selection in a theoretical manner, by simulating poor performance of fixed cut selector rules in a pure cutting approach. Using this motivation, we now present results of how parameters of a cut selection scoring rule can be learnt, and made to adapt with the input instance. We begin with an introduction to cut selection in SCIP \cite{scip8}.

The official SCIP cut scoring rule \eqref{eq:cut_rule} that has been used since SCIP 6.0 is defined as:
\begin{align}
    \begin{split}
    \texttt{cut\_score}(\boldsymbol{\lambda}, \coefficients, \objective, \lpoptimal, \incumbent):= \lambda_1 * \dcd{\coefficients}{\lpoptimal}{\incumbent} + \lambda_2 * \eff{\coefficients}{\lpoptimal} + \lambda_3 * \isp{\coefficients} + \lambda_4 * \obp{\coefficients}{\objective}
    \label{eq:cut_rule} \\
    \lambda_1 + \lambda_2 + \lambda_3 + \lambda_4 = 1, \quad \lambda_i \geq 0 \quad \forall i \in \{1,2,3,4\}, \quad \boldsymbol{\lambda} = [\lambda_1, \lambda_2, \lambda_3, \lambda_4]
    \end{split}
\end{align}
The measures integer support (\texttt{isp}) and objective parallelism (\texttt{obp}) are defined in \eqref{eq:intsup_rule} and \eqref{eq:objparal_rule}. Using the general MILP definition \eqref{eq:mip}, letting \lpoptimal be the LP optimal solution of the current relaxation, and \incumbent be the current best incumbent solution, we define the cut measures directed cutoff distance (\texttt{dcd}) and efficacy (\texttt{eff}) as follows:
\begin{align}
    \dcd{\coefficients}{\lpoptimal}{\incumbent} &:= \frac{\sum_{i=1}^{n} \alpha_i \lpoptimali{i} - \alpha_{0}}{|\sum_{i=1}^{n} \alpha_i y_i|}, \; \text{where} \quad y = \frac{\incumbent - \lpoptimal}{||\incumbent - \lpoptimal||}\label{eq:dcd_rule} \\
    \eff{\coefficients}{\lpoptimal} &:= \frac{\sum_{i=1}^{n} \alpha_i \lpoptimali{i} - \alpha_0}{\sqrt{\alpha_{1}^{2} + ... + \alpha_{n}^{2}}} \label{eq:eff_rule}
\end{align}

We note that in SCIP the cut selector does not control how many times it itself is called, which candidate cuts are provided, nor the maximum amount of cuts we can apply each round. We reiterate that each call to the selection subroutine is called an \textit{iteration} or \textit{round}. Algorithm \ref{alg:cutsel} gives an outline of the SCIP cut selection rule.

\begin{algorithm}
\caption{SCIP Default Cut Selector (Summarised)} \label{alg:cutsel}
\DontPrintSemicolon
\SetKwInOut{Input}{Input}
\SetKwInOut{Output}{Return}
\Input{\textit{cuts} $\in \reals^{s_{1} \times n}$, \textit{forced\_cuts} $\in \reals^{s_{2} \times n}$, \textit{max\_cuts} $\in \ints_{\geq 0}$, ($s_{1},s_{2}) \in \ints_{\geq 0}^{2}$}
\Output{Sorted array of selected cuts, the amount of cuts selected}
\textit{n\_cuts} $\gets$ $s_{1}$ \tcp{Size of \textit{cuts} array} 
\For{\textit{forced\_cut} in \textit{forced\_cuts}}{
    \textit{cuts, n\_cuts} $\gets$ remove cuts from \textit{cuts} too parallel to \textit{forced\_cut}
}
\textit{n\_selected\_cuts} $\gets$ 0 \\
\textit{selected\_cuts} $\gets$ $\emptyset$ \\
\While{\textit{n\_cuts} $>$ 0 and \textit{max\_cuts} $>$ \textit{n\_selected\_cuts}}{
    \tcp{Scoring done with \eqref{eq:cut_rule}. If no primal, efficacy replaces cutoff distance} 
    \textit{best\_cut} $\gets$ select highest scoring cut remaining in \textit{cuts} \\
    \textit{selected\_cuts} $\gets$ \textit{selected\_cuts} $\cup$ \textit{ best\_cut} \\
    \textit{n\_selected\_cuts} $\gets$ \textit{n\_selected\_cuts} + 1 \\
    \textit{cuts, n\_cuts} $\gets$ remove cuts from \textit{cuts} too parallel to \textit{best\_cut} \\
}
\Return{\textit{forced\_cuts} $\cup$ \textit{selected\_cuts}, $s_2$ + \textit{n\_selected\_cuts}}
\end{algorithm}

The SCIP cut selector rule in Algorithm \ref{alg:cutsel} still follows the major principles presented in \cite{achterberg2007constraint}. Cuts are greedily added by the largest score according to the scoring rule \eqref{eq:cut_rule}. After a cut is added, all other candidate cuts that are deemed too parallel to the added cut are filtered out and can no longer be added to the formulation this round. Forced cuts, which are always added to the formulation, prefilter all candidate cuts for parallelism, and are most commonly one-dimensional cuts or user defined cuts. We note that Algorithm \ref{alg:cutsel} is a summarised version of the true algorithm, and has abstracted some procedures. Certain parameters have also been removed for the sake simplicity, such as those which determine when two cuts are too parallel. We further note that $\boldsymbol{\lambda} = \{0.0, 1.0, 0.1, 0.1\}$ as of SCIP 8.0.

Motivated by work from this paper, users can now define their own cut selection algorithms and include them in SCIP for all versions since SCIP 8.0 \cite{scip8}. Users can do this with a single function interface, bypassing the previous need to modify SCIP source code. For example, users can introduce a cut selection rule with an entirely new scoring rule that replaces \eqref{eq:cut_rule}, or introduce a new filtering mechanism that is not based exclusively on parallelism. We hope that this leads to additional research about cut selection algorithms in modern MILP solvers.

\section{Problem Representation and Solution Architecture}
\label{sec:rl}
We now present our approach for learning cut selector parameters for MILPs. In Subsection \ref{subsec:graph} we describe our encoding of a general MILP instance into a bipartite graph. Subsection \ref{subsec:rl} introduces a framework for posing cut selection parameter choices as a RL problem, with Subsection \ref{subsec:architecture} describing the graph convolutional neural network architecture used as our policy network. Subsection \ref{subsec:training} outlines the training method to update our policy network.

\subsection{Problem representation as a graph} \label{subsec:graph}

The current standard for deep learning representation of a general MILP instance is the constraint-variable bipartite graph as described in \cite{gasse2019exact}. Some extensions to this design have been proposed, see \cite{ding2020accelerating}, as well as alternative non graph embeddings, see \cite{miplib} and \cite{steever2020image}. We use the embedding as introduced in \cite{gasse2019exact} and the accompanying graph convolutional neural network (GCNN) design, albeit with the removal of all LP solution specific features and a different interpretation of the output. The construction process for the bipartite graph can be seen in Figure \ref{fig:bipartite_graph}. 

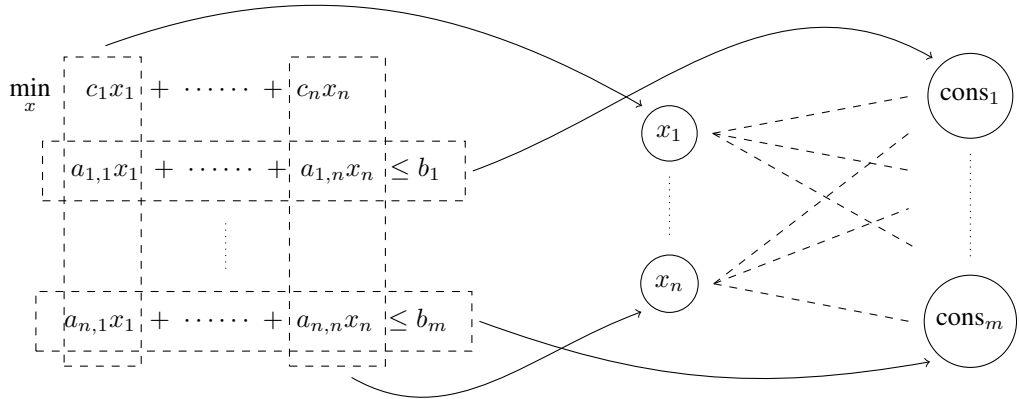
\begin{figure}
\centering
\begin{tikzpicture}
    \draw (2.5,-2) node (A) {$\underset{x}{\text{min}}$ \;\;\; $c_{1}x_{1} \;  + \; \cdots \cdots \; + \; c_{n} x_{n}$};
    \draw (3.5,-3) node (B) {$a_{1,1}x_{1} \; + \; \cdots \cdots \; + \; a_{1,n}x_{n} \; \leq b_{1}$};
    \draw (3.5,-5) node (C) {$a_{n,1}x_{1} \; + \; \cdots \cdots \; + \; a_{n,n}x_{n} \; \leq b_{m}$};
    
    \draw[dotted] ($(B.south)+(-0.4,-0.4)$) -- ($(C.north)+(-0.4,0.4)$);
    
    \draw[dashed] ($(B.north west)+(-0.25,0.1)$) rectangle ($(B.south east)+(0.2,-0.1)$);
    \draw[dashed] ($(C.north west)+(-0.25,0.1)$) rectangle ($(C.south east)+(0.2,-0.1)$);
    
    \draw[dashed] ($(A.north west)+(0.85,0.15)$) rectangle ($(C.south west)+(1.15,-0.3)$);
    \draw[dashed] ($(A.north east)+(-0.95,0.15)$) rectangle ($(C.south east)+(-0.95,-0.3)$);
    
    \node[draw,circle] (x1) at (9,-2.5) {$x_{1}$};
    \node[draw,circle] (xn) at (9,-4.5) {$x_{n}$};
    
    \node[draw,circle] (c1) at (13,-2) {$\text{cons}_{1}$};
    \node[draw,circle] (cm) at (13,-5) {$\text{cons}_{m}$};
    
    \draw[dotted] ($(x1.south)+(0, -0.2)$) -- ($(xn.north) + (0, 0.2)$);
    \draw[dotted] ($(c1.south)+(0, -0.2)$) -- ($(cm.north) + (0, 0.2)$);
    
    \draw[dashed] ($(x1.east)+(0.2,0)$) -- ($(c1.west)+(-0.2,0)$);
    \draw[dashed] ($(x1.east)+(0.2,0)$) -- ($(c1.west)+(-0.2,-1)$);
    \draw[dashed] ($(x1.east)+(0.2,0)$) -- ($(c1.west)+(-0.2,-2)$);
    
    \draw[dashed] ($(xn.east)+(0.2,0)$) -- ($(cm.west)+(-0.2,0)$);
    \draw[dashed] ($(xn.east)+(0.2,0)$) -- ($(cm.west)+(-0.2,1.5)$);
    \draw[dashed] ($(xn.east)+(0.2,0)$) -- ($(cm.west)+(-0.2,2.5)$);
    
    \draw[->] ($(A.north west)+(1.4,0.3)$) to [out=20,in=150] ($(x1.north west)+(-0.1,0.1)$);
    \draw[->] ($(C.south east)+(-1.4,-0.45)$) to [out=330,in=200] ($(xn.south west)+(-0.1,-0.1)$);
    
    \draw[->] ($(B.east)+(0.3,0)$) to [out=20,in=150] ($(c1.north west)+(-0.1,0.1)$);
    \draw[->] ($(C.east)+(0.3,0)$) to [out=340,in=190] ($(cm.south west)+(-0.1,-0.1)$);
\end{tikzpicture}
\caption{A visualisation of the variable-constraint bipartite graph construction from a MILP.}
\label{fig:bipartite_graph}
\end{figure}

The bipartite graph representation can be written as $G = \{\mathbf{V},\mathbf{C},\mathbf{E}\} \in \mathcal{G}$, where $\mathcal{G}$ is the set of all bipartite graph representations of MILP instances. $\mathbf{V} \in \reals^{n \times 7}$ is the feature matrix of nodes on one side of the graph, which correspond one-to-one with the variables (columns) in the MILP. $\mathbf{C} \in \reals^{m \times 7}$ is the feature matrix of nodes on the other side, and correspond one-to-one with the constraints (rows) in the MILP. An edge $(i,j) \in \mathbf{E}$ exists when the variable represented by $x_i$ has non-zero coefficient in constraint $\text{cons}_j$, where $i \in \{1, \dots , n\}$ and $j \in \{1, \dots , m\}$. We abuse notation slightly and say that $\mathbf{E} \in \reals^{m \times n \times 1}$, where $\mathbf{E}$ is the edge feature tensor. Note that we do not extend our MILP representation after every round of cuts is added due to using single-step learning, see Subsection \ref{subsec:rl}. The representation is extendable however to multi-step learning, where the added cuts could become constraints. The exact set of features can be seen in Table \ref{tab:features}.
\begin{table}
\centering
\begin{tabular}{ p{1.3cm} p{8.5cm} p{3cm} }
\hline
Tensor & Features & Value Range \\
\hline
 & Normalised objective coefficient  & [-1, 1] \\
$\mathbf{V}$ & Normalised lower bound $|$ upper bound & $\{-2, [-1,1], 2\}^{2}$ \\
 & Type: binary $|$ integer $|$ continuous $|$ implicit integer & one-hot encoding \\
\hline
 \multirow{3}{*}{$\mathbf{C}$}& Absolute objective parallelism (cosine similarity) & [0,1] \\
 & Normalised RHS per constraint & [-1, 1] \\
 & Type: linear $|$ logicor $|$  knapsack $|$ setppc $|$ varbound & one-hot encoding \\
\hline
$\mathbf{E}$ & Normalised coefficients per constraint & [-1, 1] \\
\hline
\end{tabular}
\caption{Feature descriptions of variable (column) feature matrix $\mathbf{V}$, constraint (row) feature matrix $\mathbf{C}$, and edge feature tensor $\mathbf{E}$.}
\label{tab:features}
\end{table}

\subsection{Reinforcement Learning Framework} \label{subsec:rl}
We formulate our problem as a single step Markov decision process. The initial state of our environment is $s_{0}=G^{0}=G$. An agent takes an action $a_{0} \in \reals^{4}$, resulting in an instant reward $\texttt{r}(s_{0},a_{0}) \in \reals$, and deterministically transitions to a terminal state $s_{1} = G^{N_{r}}$, $N_{r} \in \ints$. The action taken, $a_{0}$, is dictated by a policy $\pi_{\theta}(a_{0}|s_{0})$ that maps any initial state to a distribution over our action space, i.e. $a_{0} \sim \pi_{\theta}(\cdot|s_{0})$. 

The MILP solver in this framework is our environment, and the cut selector our agent. Let $N_{r}$ be the number of paired separation and cut selection rounds we wish to apply, and $G^{i} \in \mathcal{G}$ be the bipartite graph representation of $G \in \mathcal{G}$ after $i$ rounds have been applied. The action $a_{0} \in \reals^{4}$ is the choice of cut selector parameters $\{\lambda_1 , \lambda_2 , \lambda_3 , \lambda_4\}$ followed by $N_{r}$ paired separation rounds. Applying action $a_{0}$ to state $s_{0}$ results in a deterministic transition to $s_{1} = G^{N_{r}}$, defined by the function $\texttt{f} : \mathcal{G} \times \reals^{4} \xrightarrow{} \mathcal{G}$. 

The baseline function, $\texttt{b}(s_{0}) : \mathcal{G} \xrightarrow{} \reals,$ maps an initial state $s_{0}$ to the primal-dual difference of the LP solution of $\texttt{f}(s_{0},a') \in \mathcal{G}$, where the solver is run with standard cut selector parameters, $a' \in \reals^{4}$, and some pre-loaded primal solution. The primal-dual difference in this experiment can be thought of as a strict dual bound improvement, as the pre-loaded primal cannot be improved upon without a provable optimal solution itself. The pre-loaded primal also serves to make directed cutoff distance active from the beginning of the solving process. We do note that this is different to the normal solve process and introduces some bias, most notably for directed cutoff distance. Let $\texttt{g}_{a_{0}}(s_{0})$ be the primal-dual difference of the LP solution of $\texttt{f}(s_{0},a_{0})$ if $a_{0}$ are the cut selector parameter values used. The reward $\texttt{r}(s_{0},a_{0})$ can then be defined as:
\begin{align*}
    \texttt{r}(s_{0},a_{0}) := \frac{\texttt{b}(s_{0})-\texttt{g}_{a_{0}}(s_{0})}{|\texttt{b}(s_{0})| + 10^{-8}}
\end{align*}

Let $(s_{0}, a_{0}, s_{1}) \in \mathcal{G} \times \reals^{4} \times \mathcal{G}$ be a trajectory, also called a roll out in the literature. The goal of reinforcement learning is to maximise the expected reward over all trajectories. That is, we want to find $\theta$ that parameterises:
\begin{align}
    \underset{\theta}{\text{argmax}} \underset{(s_{0}, a_{0}, s_{1}) \sim \pi_{\theta}}{\text{\expectation}} [\texttt{r}(s_{0}, a_{0},s_{1})] \; = \;\underset{\theta}{\text{argmax}} \int_{s_{1} \in \mathcal{G}}\int_{(s_{0},a_{0}) \in \texttt{f}^{-1}(s_{1})} p(s_{0})\pi_{\theta}(a_{0}|s_{0})\texttt{r}(s_{0},a_{0}) \; ds_{0} da_{0} ds_{1} 
    \label{eq:reinforce}
\end{align}
Here, $p(s_{0})$ is the density function on instances $s \in \mathcal{G}$ evaluated at $s=s_{0}$. The pre-image $\texttt{f}^{-1}(s_{1}): \mathcal{G} \xrightarrow{} \reals^{4} \times \mathcal{G}$ is defined as:
\begin{align*}
    \texttt{f}^{-1}(\mathcal{G}') := \{ (s_{0},a_{0}) \in \mathcal{G} \times \reals^{4} \; | \; \texttt{f}(s_{0},a_{0}) \in \mathcal{G}' \}, \quad \mathcal{G}' \subseteq \mathcal{G}
\end{align*}
We note that equation \eqref{eq:reinforce} varies from the standard definition as seen in \cite{sutton2018reinforcement}, and those presented in similar research \cite{gasse2019exact, tang2020reinforcement}, as our action space is continuous. Additionally, as the set $\mathcal{G}$ is infinite and we do not know the density function $p(s)$, we use sample average approximation, creating a uniform distribution around our input data set.

\subsection{Policy Architecture} \label{subsec:architecture}

Our policy network, $\pi_{\theta}(\cdot|s_{0} \in \mathcal{G})$, is parameterised as a graph convolutional neural network, and follows the general design as in \cite{gasse2019exact}, where $\theta$ fully describes the complete set of weights and biases in the GCNN. The changes in design are that we use 32 dimensional convolutions instead of 64 due to our lower dimensional input, and output a 4 dimensional vector as we are interested in cut selector parameters. This technique of using the constraint-variable graph as an embedding for graph neural networks has gained recent popularity, see \cite{cappart2021combinatorial} for an overview of applications in combinatorial optimisation. 

Our policy network takes as input the constraint-variable bipartite graph representation $s_{0} = \{\mathbf{V}, \mathbf{C}, \mathbf{E} \}$. Two staggered half-convolutions are then applied, with messages being passed from the embedding $\mathbf{V}$ to $\mathbf{C}$ and then back. The result is a bipartite graph with the same topology but new feature matrices. Our policy is then obtained by normalising feature values over all variable nodes and averaging the result into a vector $\mu \in \reals^{4}$. This vector $\mu \in \reals^{4}$ represents the mean of a multivariate normal distribution, \normal{\mu}{\gamma I}, where $\gamma \in \reals$. We note that having the GCNN only output the mean was a design choice to simplify the learning process, and that our design can be extended to also output $\gamma$ or additional distribution information. Any sample from the distribution \normal{\mu}{\gamma I} can be considered an action $a_{0} \in \reals^{4}$, which represents $\{\lambda_{1}, \lambda_{2}, \lambda_{3}, \lambda_{4}\}$ with the non-negativity constraints relaxed. Figure \ref{fig:architecture} provides an overview of this architecture. For a walk-through of the GCNN, see Appendix \ref{sec:forward_pass}.

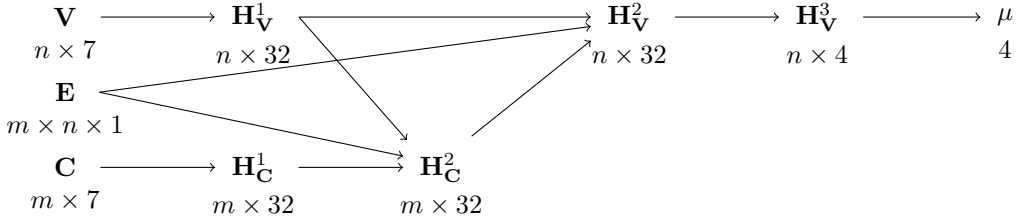
\begin{figure}
\centering
\begin{tikzpicture}
    \draw (2.5,1) node (V) {$\mathbf{V}$};
    \draw (2.5,0) node (E) {$\mathbf{E}$};
    \draw (2.5,-1) node (C) {$\mathbf{C}$};
    \draw ($(V.south)+(0,-0.2)$) node {$n\times 7$};
    \draw ($(E.south)+(0,-0.2)$) node {$m\times n \times 1$};
    \draw ($(C.south)+(0,-0.2)$) node {$m\times 7$};
    
    \draw (5,1) node (V1) {$\mathbf{H}_{\mathbf{V}}^{1}$};
    \draw (5,-1) node (C1) {$\mathbf{H}_{\mathbf{C}}^{1}$};
    \draw ($(V1.south)+(0,-0.2)$) node {$n\times 32$};
    \draw ($(C1.south)+(0,-0.2)$) node {$m\times 32$};
    
    \draw (7.5,-1) node (C2) {$\mathbf{H}_{\mathbf{C}}^{2}$};
    \draw ($(C2.south)+(0,-0.2)$) node {$m\times 32$};
    
    \draw (10,1) node (V2) {$\mathbf{H}_{\mathbf{V}}^{2}$};
    \draw ($(V2.south)+(0,-0.2)$) node {$n\times 32$};
    
    \draw (12.5,1) node (V3) {$\mathbf{H}_{\mathbf{V}}^{3}$};
    \draw ($(V3.south)+(0,-0.2)$) node {$n\times 4$};
    
    \draw (15,1) node (mu) {$\mu$};
    \draw ($(mu.south)+(0,-0.2)$) node {$4$};

    \draw[->] ($(V.east)+(0.2,0)$) -- ($(V1.west)+(-0.1,0)$);
    \draw[->] ($(C.east)+(0.2,0)$) -- ($(C1.west)+(-0.1,0)$);
    
    \draw[->] ($(V1.east)+(0.2,0)$) -- ($(C2.north west)+(-0.05,0.05)$);
    \draw[->] ($(E.east)+(0.2,0)$) -- ($(C2.west)+(-0.1,0.15)$);
    \draw[->] ($(C1.east)+(0.2,0)$) -- ($(C2.west)+(-0.1,0)$);
    
    \draw[->] ($(V1.east)+(0.2,0)$) -- ($(V2.west)+(-0.1,0)$);
    \draw[->] ($(E.east)+(0.2,0)$) -- ($(V2.west)+(-0.1,-0.125)$);
    \draw[->] ($(C2.north east)+(0,0.1)$) -- ($(V2.south west)+(-0.1,0)$);
    
    \draw[->] ($(V2.east)+(0.2,0)$) -- ($(V3.west)+(-0.1,0)$);
    \draw[->] ($(V3.east)+(0.2,0)$) -- ($(mu.west)+(-0.1,0)$);

\end{tikzpicture}
\caption{The architecture of policy network  $\pi_{\theta}(a_{0}|s_{0})$. $\mathbf{H}$ represent hidden layers of the network.}
\label{fig:architecture}
\end{figure}

\subsection{Training Method} \label{subsec:training}

To train our GCNN we use policy gradient methods, specifically the REINFORCE algorithm with baseline and gaussian exploration, see \cite{sutton2018reinforcement} for an overview. An outline of the algorithm is given in Algorithm \ref{alg:reinforce}. 

\begin{algorithm}
\caption{Batch REINFORCE}\label{alg:reinforce}
\DontPrintSemicolon
\SetKwInOut{Input}{Input}
\Input{Policy network $\pi_{\theta}$, MILP instances $batch$, $n_{\text{samples}} \in \naturals$, $N_{r} \in \naturals$}
$\mathcal{L}$ $\gets$ 0 \\
\For{$s_{0}$ in batch}{
    $\mu \gets \pi_{\theta}(\cdot|s_{0})$ \tcp{Note that $\pi_{\theta}(\cdot|s_{0})$ is technically $\normal{\mu}{\gamma I}$}
    \For{i in $\{1, \dots, n_{\text{samples}} \}$}{
        $a_{0} \gets$ sample $\normal{\mu}{\gamma I}$\;
        $s_{1} \gets$ Apply $N_{r}$ rounds of separation and cut selection to $s_{0}$\;
        $r \gets$ Relative dual bound improvement of $s_{1}$ to some baseline \;
        $\mathcal{L}$ $\gets$ $\mathcal{L}$ + ($-r$ $\times$ $log(\pi_{\theta}(a_{0}|s_{0})))$ \tcp{Use log probability for numeric stability}
    }
}
$\theta \gets \theta + \nabla_{\theta}\mathcal{L}$ \tcp{We use the Adam update rule in practice \cite{adam}}
\end{algorithm}

Algorithm \ref{alg:reinforce} is used to update the weights and biases, $\theta$, of our GCNN, $\pi_{\theta}(\cdot|s_{0} \in \mathcal{G})$. It does this for a batch of instances by minimising $\mathcal{L}$, referred to as the loss function, see \cite{goodfellow2016deep}. We used default parameter settings in the Adam update rule, aside from a learning rate with value $5 \times 10^{-4}$. Our training approach is performed offline, and only the final GCNN is used for evaluation. 

\section{Experiments}
\label{sec:experiments}

We use MIPLIB 2017\footnote{MIPLIB 2017 -- The Mixed Integer Programming Library \url{https://miplib.zib.de/}.} \cite{miplib} as our first data set, which we simply refer to it as MIPLIB, and a set of neural network verification instances\footnote{\url{https://github.com/deepmind/deepmind-research/tree/master/neural_mip_solving}} \cite{nair2020solving} as our second data set, which we refer to as NN-Verification. For all subsections we run experiments on instances that have gone through SCIP's default presolve, see \cite{achterberg2020presolve} for an overview on presolve techniques. Each individual run on a presolved instance consists of a single round of presolve (to remove fixed variables), then solving the root node, using 50 separation rounds with a limit of 10 cuts per round. Propagation, heuristics, and restarts are disabled for the runs, with a slightly modified version of SCIP's cut selector in Algorithm \ref{alg:cutsel} being used, where $\boldsymbol{\lambda}$ is defined by the user for each run. A pre-loaded MIP start is also provided, which is the best solution found within 600s when solved with default settings. In the case of less than 10 cuts being selected due to parallelism filtering, the highest filtered scoring cuts are added until the 10 cut per round limit is reached, or no more cuts exist. We believe these conditions best represent a sandbox environment that allows cut selection to be the largest influence on solver performance. Additionally, all results are obtained by averaging results over the SCIP random seeds $\{1, 2, 3\}$. All code for reproducing experiments can be found at \url{https://github.com/Opt-Mucca/Adaptive-Cutsel-MILP}. 

\begin{table}[h]
\centering
\resizebox{\columnwidth}{!}{%
\begin{tabular}{lll}
Criteria & \% MIPLIB & \% NN-Verification \\
\hline
Tags: \textit{feasibility}, \textit{numerics}, \textit{infeasible}, \textit{no solution} & 4.5\%, 17.5\%, 2.8\%, 0.9\% & - \\
Unbounded objective, MIPLIB solution unavailable & 0.9\%, 2.6\% & - \\
Presolve longer than 300s under default conditions & 3.6\% & 0\% \\
No feasible solution found in 600s under default conditions & 10.9\% & 0.2\% \\
Solved to optimality at root & 13.7\% & 0\% \\
Root solve longer than 20s & 22.5\% & 7.1\% \\
Too few cuts applied (< 250) & 7.5\% & 26.4\% \\
Primal-dual difference < 0.5 & 0.9\% & 40.5\% \\
LP errors & 0.5\% & 0.1\% \\
\hline
\end{tabular}
}
\caption{Percentage of instances removed from MIPLIB and NN-Verification data sets.}
\label{tab:instance_filtering}
\end{table}

The modification of SCIP's default cut selector for our experiment is done to standardise the range of the individual cut measures, simplifying the learning process of those measure's coefficients. The measures \texttt{isp} and \texttt{obp} for any cut are in the range $[0,1]$, while the measures \texttt{eff} and \texttt{dcd}, following the assumption that \lpoptimal is separated, are in the range $[0, \infty)$. We therefore substitute \texttt{eff} and \texttt{dcd} in the default SCIP cut scoring rule by the following normalised measures \texttt{eff'} and \texttt{dcd'}:
\begin{align}
    \effs{\coefficients}{\lpoptimal}{\cuts} := \big( \frac{\texttt{log}(\eff{\coefficients}{\lpoptimal} + 1)}{\texttt{log}(\texttt{max}_{\boldsymbol{\alpha'} \in \cuts}\{\eff{\boldsymbol{\alpha'}}{\lpoptimal}\} + 1)} \big)^{2} \label{eq:eff_rule_new}\\
    \dcds{\coefficients}{\lpoptimal}{\incumbent}{\cuts} := \big( \frac{\texttt{log}(\dcd{\coefficients}{\lpoptimal}{\incumbent} + 1)}{\texttt{log}(\texttt{max}_{\boldsymbol{\alpha'} \in \cuts}\{\dcd{\boldsymbol{\alpha'}}{\lpoptimal}{\incumbent}\} + 1)} \big)^{2} \label{eq:dcd_rule_new}
\end{align}

For all experiments SCIP 8.0.1 \cite{scip8} is used, with PySCIPOpt \cite{pyscipopt} as the API, and Gurobi 9.5.1 \cite{gurobi} as the LP solver. PyTorch 1.7.0 \cite{pytorch} and PyTorch-Geometric 2.0.1 \cite{pytorchgeometric} are used to model the GCNN. All experiments for MIPLIB are run on a cluster equipped with Intel Xeon E5-2670 v2 CPUs with 2.50GHz and 128GB main memory, and for NN-Verification on a cluster equipped with Intel Xeon E5-2690 v4 CPUs with 2.60GHz and 128GB main memory.

For instance selection we discard instances from both instance sets that satisfy any of the criteria in Table \ref{tab:instance_filtering}. To minimise bias, instances were discarded if any criteria were triggered in an individual run on any seed under default condition or those tested in Experiment \ref{subsec:lb_experiment}. We believe that these conditions focus on instances where a good selection strategy of cuts can improve the dual bound in a reasonable amount of time. We note that improving the dual bound is a proxy for overall solver performance, and does not necessarily result in improved solution time. We additionally note that only 1000 randomly selected instances from the NN-Verification data set were used as opposed to the entire data set. All instance sets following instance filtering are split into training-test subsets subject to a 80-20 split. 

\begin{figure}[h]
    \centering
    \includegraphics[width=0.775\textwidth]{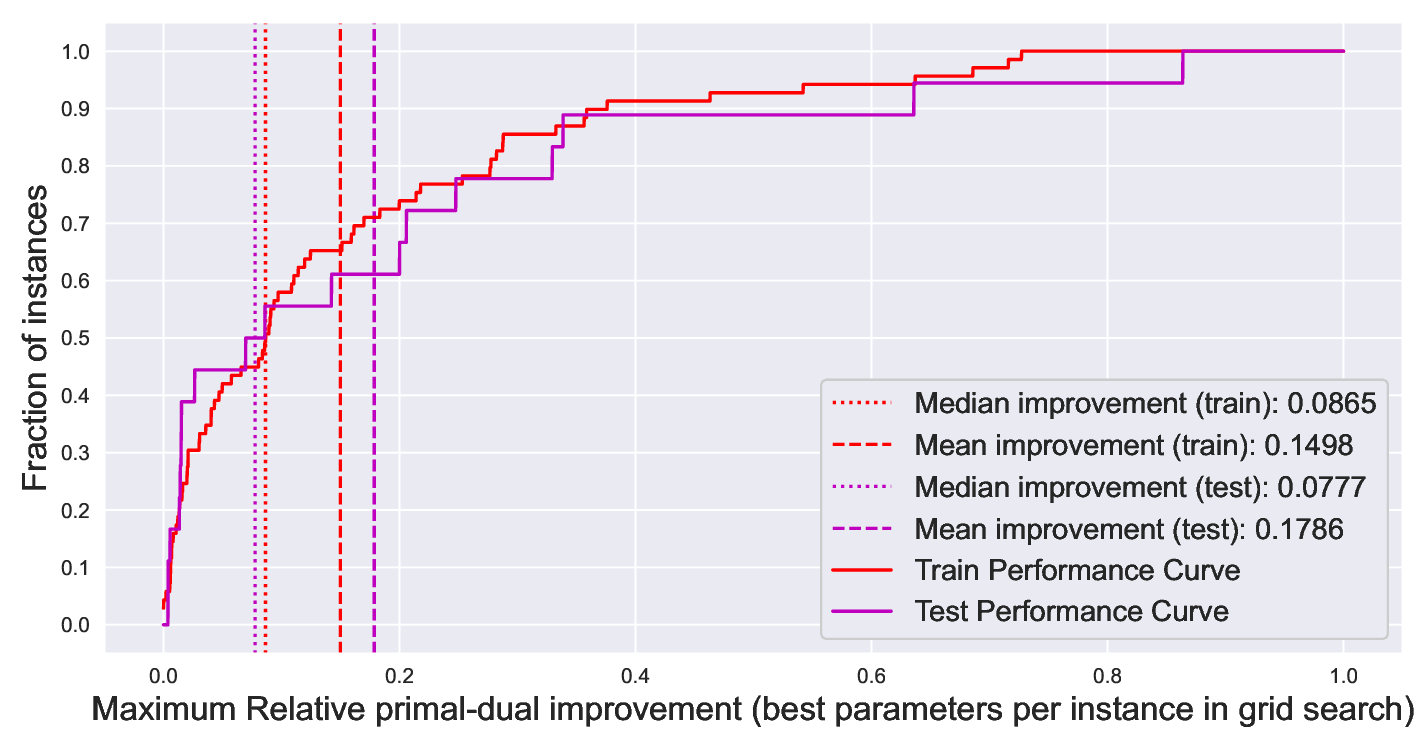} \\
    \includegraphics[width=0.775\textwidth]{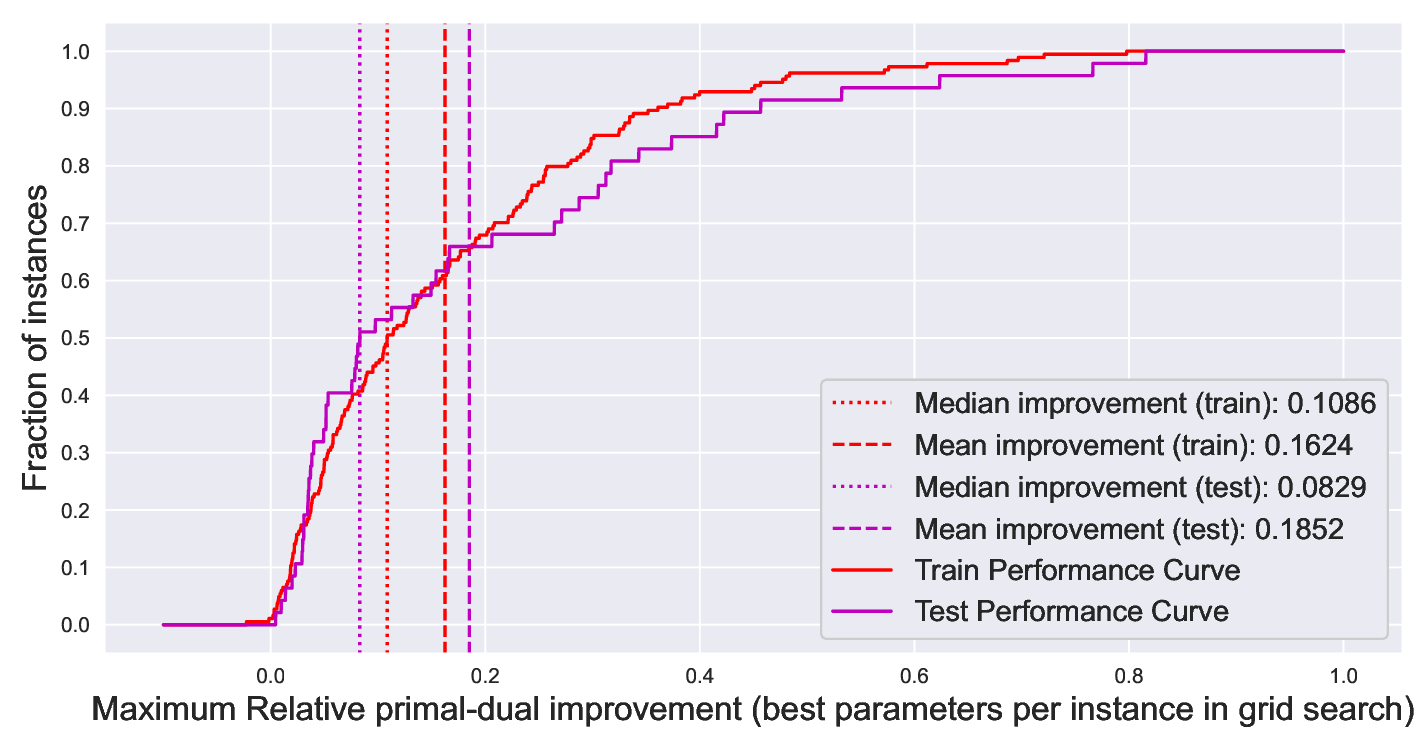}
    \caption{Relative improvement of best choice parameters compared to default parameters in Experiment \ref{subsec:lb_experiment}. (top) MIPLIB. (bottom) NN-Verification.}
    \label{fig:cumulative_grid_search}
\end{figure}

\subsection{Lower Bounding Potential Improvement}
\label{subsec:lb_experiment}

To begin our experiments, we first perform a grid search to give a lower bound on the potential improvement that adaptive cut selection can provide. We generate all parameter scenarios satisfying the following condition: 
\begin{align*}
    \sum_{i=1}^{4} \lambda_{i} = 1, \;\; \text{where} \;\; \lambda_{i} = \frac{\beta_{i}}{10}, \quad \beta_{i} \in \mathbb{N}, \quad \forall i \in \{1,2,3,4\}
\end{align*}
Recall that $\lambda_{i}$ for all $ i \in \{1,2,3,4\}$ are respectively multipliers of the cut scoring measures normalised directed cutoff distance (\texttt{dcd'}), normalised efficacy (\texttt{eff'}), integer support (\texttt{isp}), and objective parallelism (\texttt{obp}).

We solve the root node for all instances and parameter choices, and store the cut selector parameters that result in the smallest primal-dual difference, as well as their relative primal-dual difference improvement compared to that when using default cut selector parameter values. We remove all instances where the worst case parameter choice compared to the best case parameter choice differ by a relative primal-dual difference performance of less than 0.1\%. Additionally, we remove instances where a quarter or more of the parameter choices result in the identical best performance. These removals are made due to the sparse learning opportunities provided by the instances, as the best case performance is minimally different from the worst, or the best case performance is too common. This results in an additional 2.5\% and 0.2\% of instances being removed for MIPLIB, leaving 87 (8.2\%) instances remaining. For NN-Verification no additional instances are removed under these conditions, leaving 231 instances (23.1\%) instances remaining.
We note that all criteria for instance removal in Table \ref{tab:instance_filtering} were performed using the grid search results as well as those under default conditions to ensure no bias throughout instance selection. 

We conclude from the results presented in Figure \ref{fig:cumulative_grid_search} that there exists notable amounts of improvement potential per instance from better cut selection rules. Specifically, we observe that the median relative primal-dual difference improvement compared to standard conditions is at least $7.7\%$ over the training and test sets of both MIPLIB and NN-Verification. We consider this difference very large considering at most 500 cuts (50 rounds of 10 cuts) are added, with this value being only a lower bound on potential improvement as the results come from a grid search of the parameter space. Instance specific results for MIPLIB are available in Appendix \ref{sec:appendix_experiments}.

We draw attention to the aggregated best performing parameter results from the grid search in Table \ref{tab:grid_search}. We see in both data sets that a distance based metric has the largest mean value, being $\lambda_{1}$ (multiplier of \texttt{dcd'}) for MIPLIB and $\lambda_{2}$ (multiplier of \texttt{eff'}) for NN-Verification. We also see that $\lambda_{3}$ and $\lambda_{4}$ take on much larger mean values than those in the default SCIP scoring rule, where they have value 0.1, suggesting that the measures \texttt{isp} and \texttt{obp} are not only useful in distance dominated scoring rules. These are aggregated results, however, and we note that they best summarise how every measure can be useful for some instances, further motivating the potential of instance-dependent based cut selection. We stress that this motivation is also true for the homogeneous NN-Verification, where all parameters are still useful.

\begin{table}[h]
\centering
\begin{tabular}{lccccccc}
& \multicolumn{3}{c}{MIPLIB} && \multicolumn{3}{c}{NN-Verification} \\
\cline{2-4} \cline{6-8}
Parameter & Mean & Median & Std Deviation && Mean & Median & Std Deviation \\
\hline
$\lambda_{1}$ (\texttt{dcd}) & 0.312 & 0.200 & 0.252 && 0.223 & 0.200 & 0.204\\
$\lambda_{2}$ (\texttt{eff}) & 0.177 & 0.100 & 0.181 && 0.315 & 0.300 & 0.216 \\
$\lambda_{3}$ (\texttt{isp}) & 0.232 & 0.200 & 0.226 && 0.220 & 0.200 & 0.214 \\
$\lambda_{4}$ (\texttt{obp}) & 0.279 & 0.200 & 0.248 && 0.241 & 0.200 & 0.218 \\
\hline
\end{tabular}
\caption{Statistics of best choice parameters per instance (train + test) in Experiment \ref{subsec:lb_experiment}.}
\label{tab:grid_search}
\end{table}

\subsection{Random Seed Initialisation} \label{subsec:random_seed}
Let $\theta_{i}$ be the initialised weights and biases using random seed $i$, where $i \in \naturals$. To minimise the bias of our initialised policy with respect to $\boldsymbol{\lambda}=(\lambda_{1}, \lambda_{2}, \lambda_{3}, \lambda_{4})$, the random seed that satisfies \eqref{eq:rand_seed} is used throughout our experiments.
\begin{align}
    \underset{i \in \{0,...,999\}}{\text{argmin}} \sum_{s_{0}} \lVert \expectation [\pi_{\theta_{i}}(\cdot | s_{0})] - [\frac{1}{4},\frac{1}{4},\frac{1}{4},\frac{1}{4}] \rVert_{1} \label{eq:rand_seed}
\end{align}
We believe this random seed minimises bias as the GCNN initially outputs approximately equal values over the data set, allowing the GCNN to best decide the importance of each parameter. This was motivated from the observation that some random initialisations resulted in a cut measure always having an output value of 0 starting from the untrained GCNN. Different random seeds were used for the MIPLIB and NN-Verification experiments, and the random seeds were found using combined training and test sets. 

\begin{figure}[h]
\centering
\includegraphics[width=0.775\textwidth]{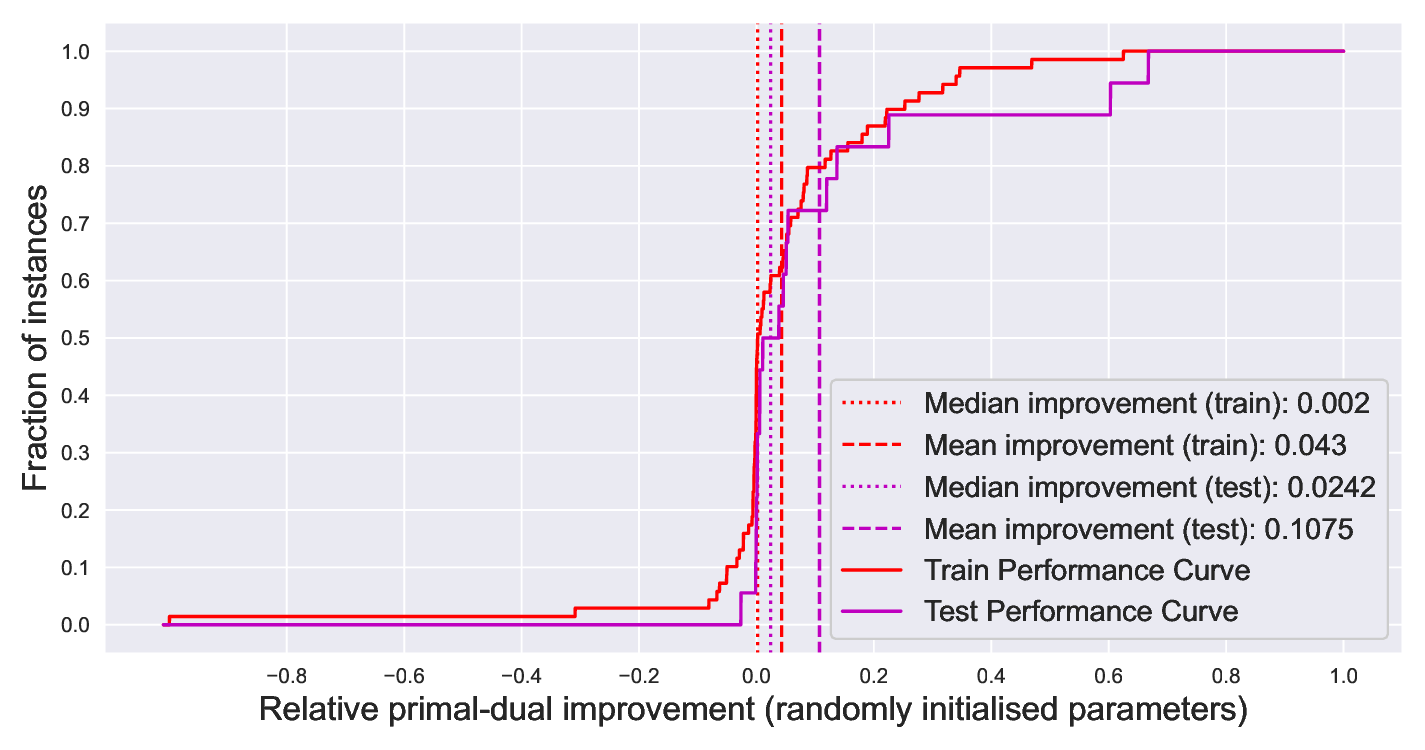} \\
\includegraphics[width=0.775\textwidth]{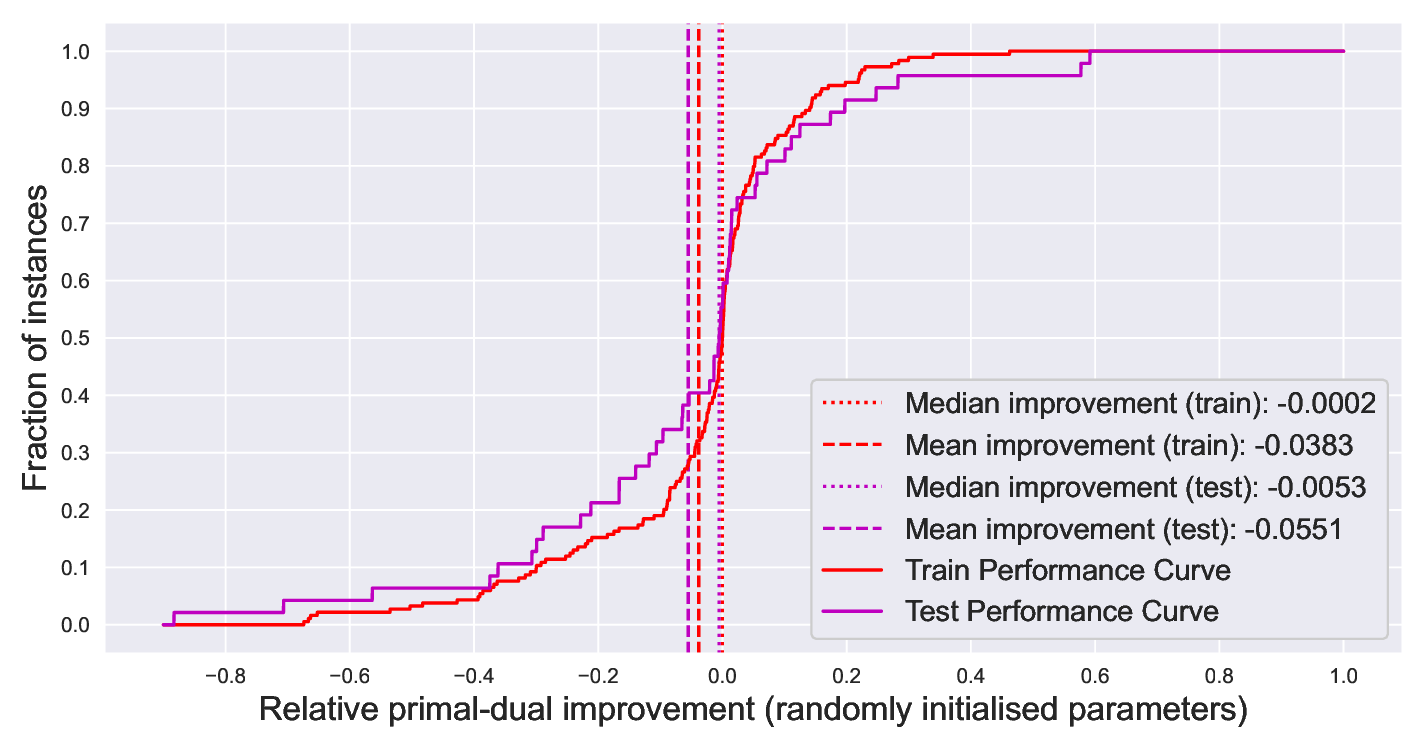}
\caption{Relative primal-dual difference improvement from random initialised parameters compared to default parameters in Experiment \ref{subsec:random_seed}. (top) MIPLIB. (bottom) NN-Verification.}
\label{fig:cumulative_random_seed}
\end{figure}

The performance of the randomly initialised GCNN can be seen in Figure \ref{fig:cumulative_random_seed} and Table \ref{tab:random_seed}, with instance specific results available in Appendix \ref{sec:appendix_experiments}. We observe a larger than expected mean improvement over the test set for MIPLIB, however from the size of the test set and the much lower median improvement, conclude that it's the result of outliers. Surprisingly, the median and mean relative improvement for training and test sets for MIPLIB are positive, while they are negative for NN-Verification. We believe that the positive, albeit small, performance improvement of our random initialisation over default SCIP on MIPLIB is from our slight modification of the cut scoring rule with \texttt{eff'} and \texttt{dcd'}. For NN-verification, we believe the negative performance comes from the decrease in $\lambda_{2}$ (the multiplier of \texttt{eff'}), which is weighted highly in default SCIP, and is important for the instance set according to results in Experiment \ref{subsec:lb_experiment}.

\begin{table}[h]
\centering
\begin{tabular}{lccccccc}
& \multicolumn{3}{c}{MIPLIB} && \multicolumn{3}{c}{NN-Verification} \\
\cline{2-4} \cline{6-8}
Parameter & Mean & Median & Std Deviation && Mean & Median & Std Deviation \\
\hline
$\lambda_{1}$ (\texttt{dcd}) & 0.253 & 0.253 & 0.006 && 0.253 & 0.254 & 0.005\\
$\lambda_{2}$ (\texttt{eff}) & 0.294 & 0.296 & 0.012 && 0.215 & 0.214 & 0.006 \\
$\lambda_{3}$ (\texttt{isp}) & 0.306 & 0.300 & 0.018 && 0.279 & 0.279 & 0.002 \\
$\lambda_{4}$ (\texttt{obp}) & 0.147 & 0.149 & 0.011 && 0.253 & 0.253 & 0.007 \\
\hline
\end{tabular}
\caption{Statistics of random initialised parameters per instance (train + test) in Experiment \ref{subsec:random_seed}.}
\label{tab:random_seed}
\end{table}

\subsection{Standard Learning Method}
\label{subsec:smac}

Before we attempt to determine the capability of our RL framework, policy architecture, and training method, we first design an experiment using SMAC (Sequential Model Algorithm Configuration)\footnote{\url{https://github.com/automl/SMAC3}}, see \cite{smac}. SMAC is a standard package in the field of algorithm configuration, and is largely based on Bayesian optimisation. Unlike our approach, which returns instance-dependent cut selector parameters, SMAC will return a single set of parameter values that works over the entire instance set. It can therefore be thought of as a more intelligent approach than traditional grid searches, which have been used to define SCIP default parameter values. We therefore aim to outperform SMAC given the adaptive advantage of our algorithm. 

We use SMAC4BB, which is targeted at low dimensional and continuous black box functions, and provide SCIP 8.0.1's default values for $\boldsymbol{\lambda}$. We run 250 epochs of SMAC (the same as we will in Experiment \ref{subsec:adaptive_root}), however we note that our approach requires additional solver calls due to taking more than one sample of cut selector parameters from the generated distributions during training. The function that SMAC attempts to minimise is the average primal-dual difference over all instance-seed pairs relative to that produced by SCIP with default cut selector parameter values. 

\begin{figure}[h]
\centering
\includegraphics[width=0.775\textwidth]{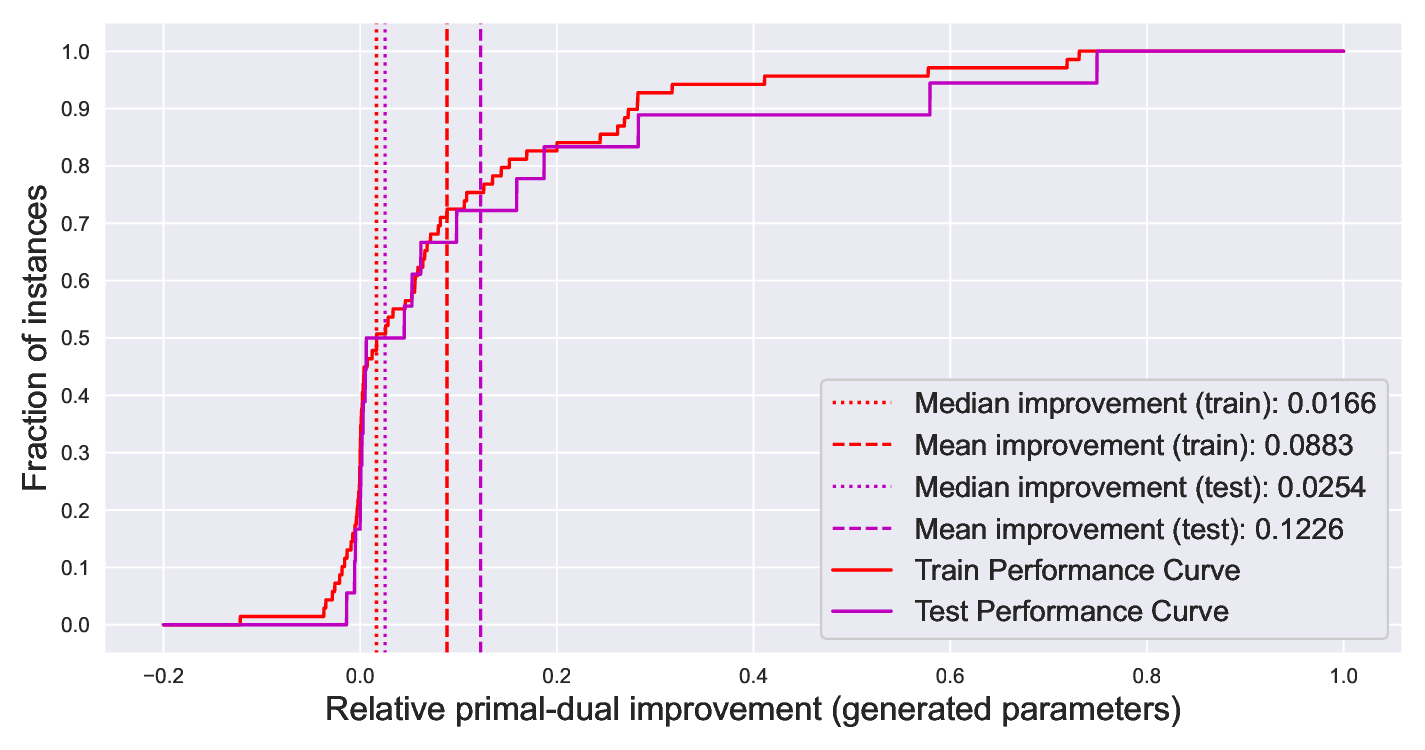} \\
\includegraphics[width=0.775\textwidth]{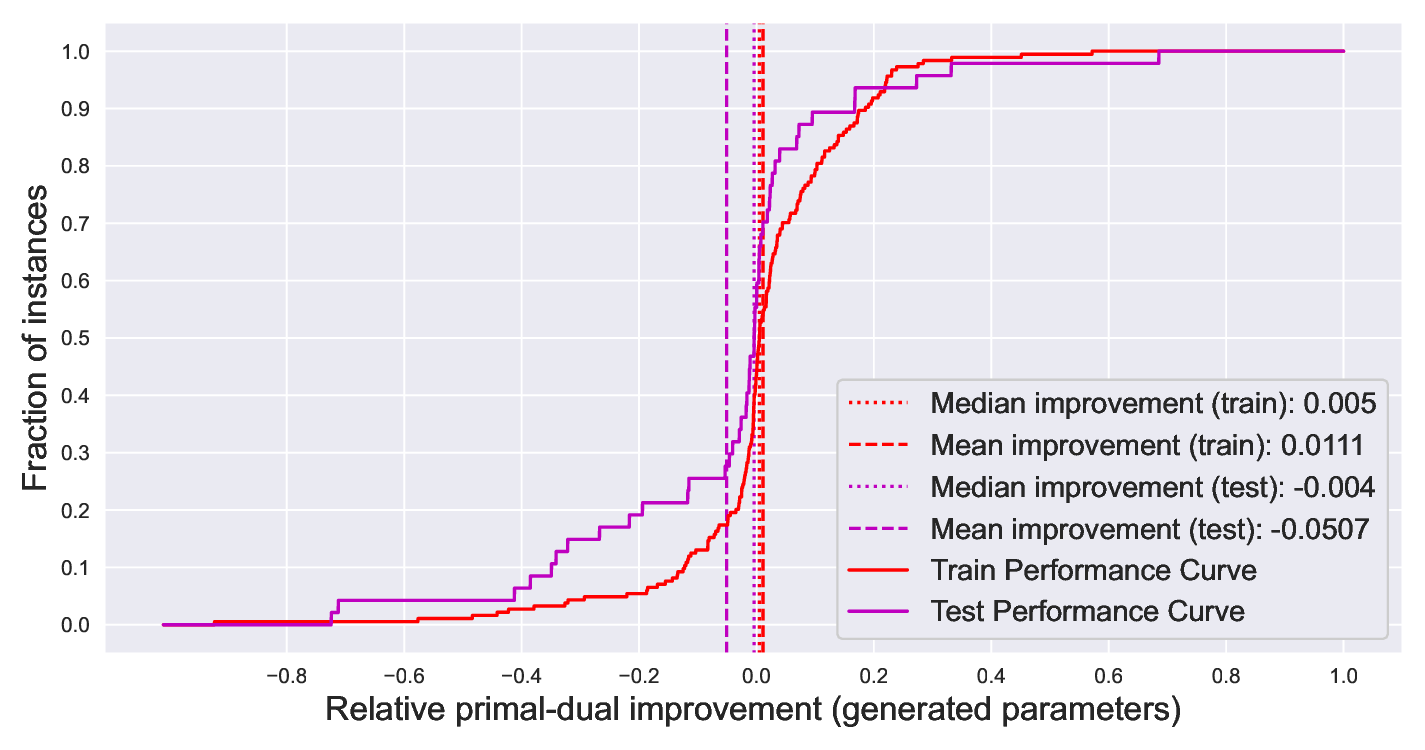}
\caption{Relative primal-dual difference improvement compared to default SCIP from generated parameters in Experiment \ref{subsec:smac}. (top) MIPLIB. (bottom) NN-Verification.}
\label{fig:cumulative_smac}
\end{figure}

We observe an increase in performance over MIPLIB after using SMAC compared to that of the random initialisation as seen in Figure \ref{fig:cumulative_smac}. The median improvement over default SCIP for the training set increases to 1.6\% from 0.2\%, and to 2.5\% from 2.4\% for the test set, with the mean improvement over both sets increasing by at least 2\%. For NN-Verification, we only observe a median increase to 0.5\% from -0.02\% for the training set and -0.4\% from -0.5\% for the test set, with the mean performance of each set increasing by less than 2\%. From the best found constant parameter choices generated by SMAC as displayed in Table \ref{tab:smac}, we conclude that an efficacy dominated cut scoring rule, such as default SCIP, is likely the best choice for NN-Verification if restricted to a non-adaptive rule.

\begin{table}[h]
\centering
\begin{tabular}{lccccc}
& MIPLIB &&&& NN-Verification \\
\cline{2-3} \cline {5-6}
Parameter & Constant &&&& Constant \\
\hline
$\lambda_{1}$ (\texttt{dcd}) & 0.600 &&&& 0.065  \\
$\lambda_{2}$ (\texttt{eff}) & 0.124 &&&& 0.633 \\
$\lambda_{3}$ (\texttt{isp}) & 0.175 &&&& 0.301 \\
$\lambda_{4}$ (\texttt{obp}) & 0.100 &&&& 0.000 \\
\hline
\end{tabular}
\caption{Statistics of generated constant parameters in Experiment \ref{subsec:smac}}
\label{tab:smac}
\end{table}


\subsection{Learning Adaptive Parameters} 
\label{subsec:adaptive_root}

\begin{figure}[h]
\centering
\includegraphics[width=0.775\textwidth]{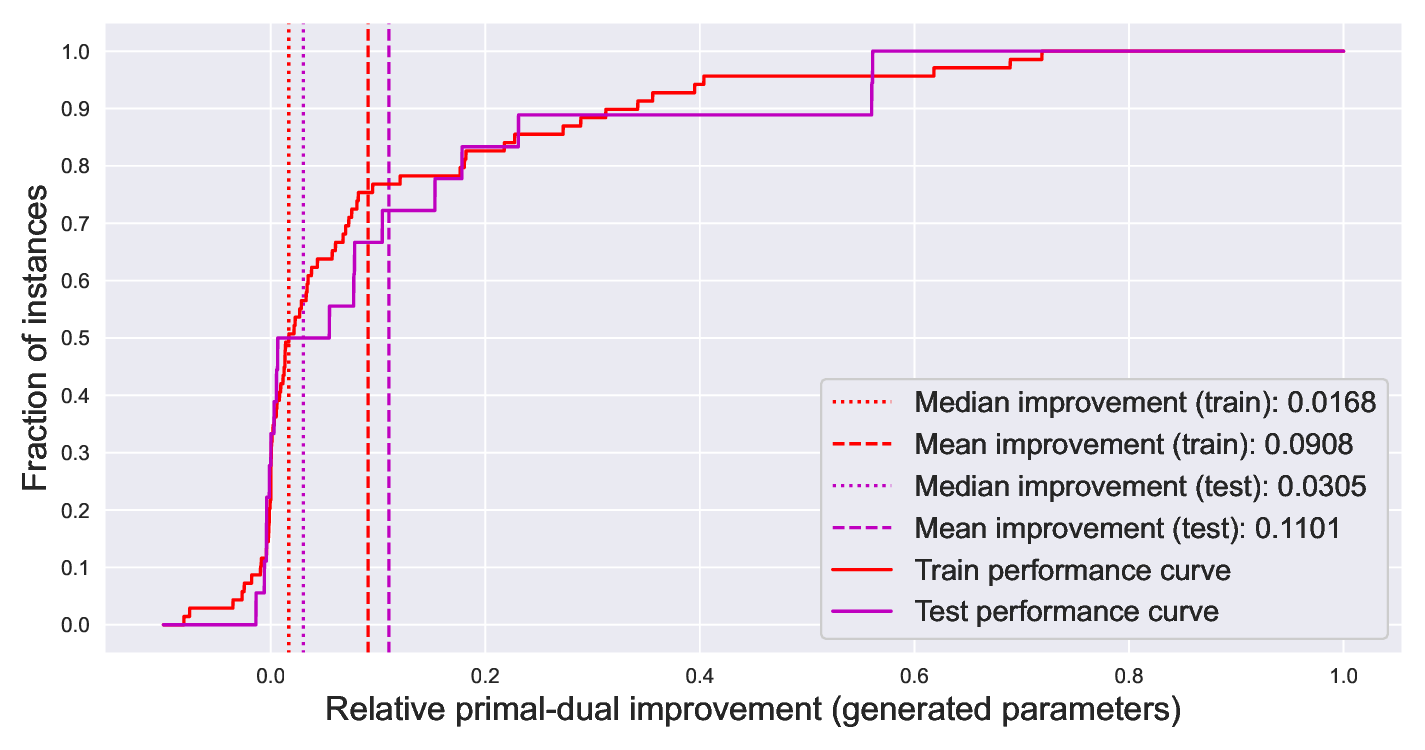} \\
\includegraphics[width=0.775\textwidth]{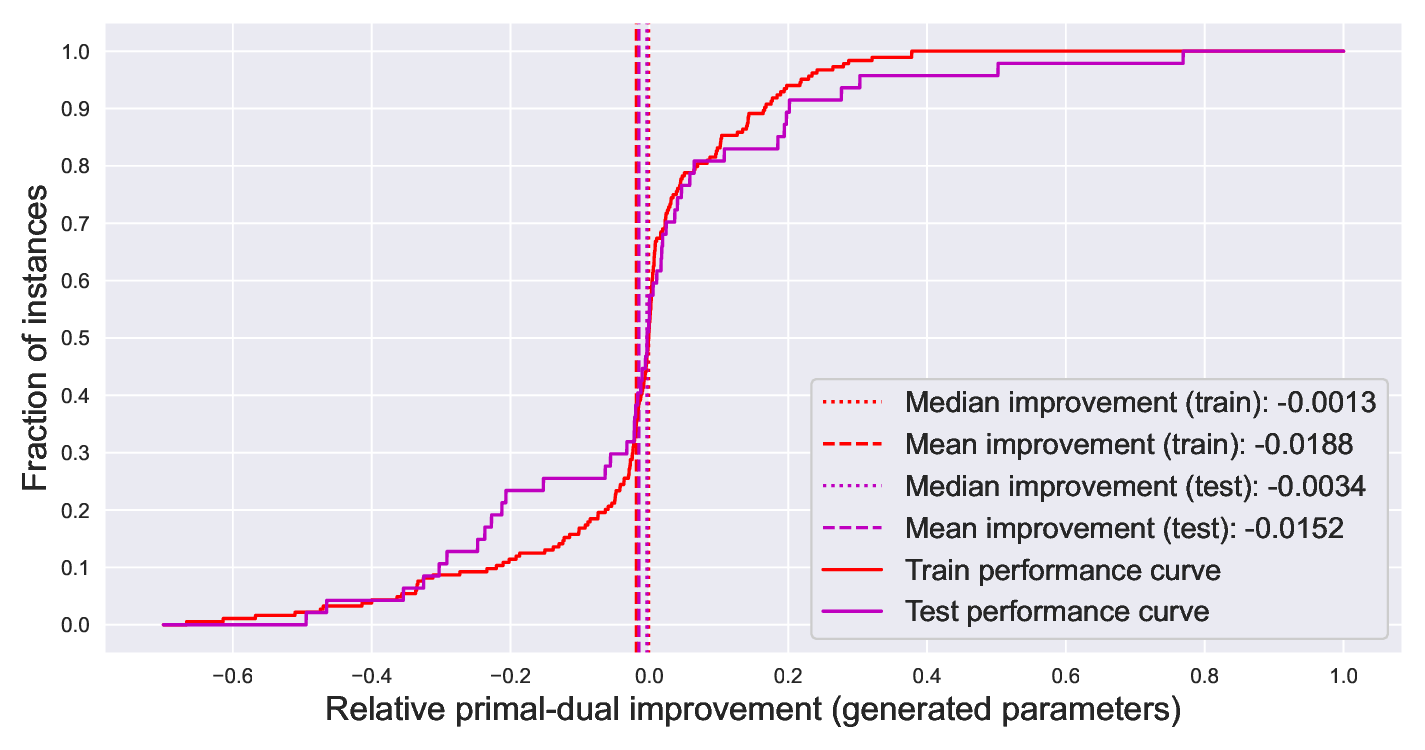}
\caption{Relative primal-dual difference improvement from generated parameters compared to default SCIP parameters in Experiment \ref{subsec:adaptive_root}. (top) MIPLIB. (bottom) NN-Verification.}
\label{fig:cumulative_full}
\end{figure}

We now show the performance of our RL framework, policy architecture, and training method compared to default SCIP parameter choices over MIPLIB and NN-Verification. To do so, we run 5000 iterations of Algorithm \ref{alg:reinforce} (250 epochs), with $n_\text{samples}$ set to 20, and allocate 10\% of instances from the training set per batch. $\gamma$ of the multivariate normal distribution, \normal{\mu}{\gamma I}, is defined by the following, where $n_{\text{epochs}}$ is the total amount of iterations of Algorithm \ref{alg:reinforce} and $i_{\text{epoch}}$ is the current epoch:
\begin{align} \label{eq:gamma}
    \texttt{\gamma}(i_{\text{epoch}}, n_{\text{epochs}}) := 0.01 - \frac{0.009 * i_{\text{epoch}}}{n_{\text{epochs}}}, \;\; i_{\text{epoch}}, n_{\text{epochs}} \in \naturals, \;\; i_{\text{epoch}} \leq n_{\text{epochs}}
\end{align}
We note that $\gamma$ represents one of many opportunities, such as the GCNN structural design and training algorithm, where a substantial amount of additional effort could be invested to (over)tune the learning experiment. We also note that a forward pass of the trained network takes on average less than 0.1s over both data sets, see Table \ref{tab:inference_time} in Appendix \ref{sec:appendix_experiments}, and that updating the GCNN is negligible w.r.t. time compared to solving the MILPs.

The randomly initialised GCNN over the training set of MIPLIB has a median relative primal-dual difference improvement of $0.2\%$ over default as seen in Figure \ref{fig:cumulative_random_seed}, compared to the $1.7\%$ of our MIPLIB trained GCNN as seen in Figure \ref{fig:cumulative_full}. This improvement is minimally better than the $1.6\%$ improvement over default from SMAC in Figure \ref{fig:cumulative_smac}, with our approach slightly improving over SMAC for the training set of MIPLIB, and performing comparably over the test set, having better median improvement and worse mean. These results suggest that our approach works, in that it is comparable with other standard approaches, and can provide improvement over default parameter choices, but that it is unable to capture the full extent of performance improvement that is shown to exist in Experiment \ref{subsec:lb_experiment}. Interestingly, we note that over MIPLIB, Experiments \ref{subsec:lb_experiment} - \ref{subsec:adaptive_root} all on average set $\lambda_{1}$ (the multiplier for \texttt{dcd}) to be the largest coefficient, as seen in Tables \ref{tab:grid_search}, \ref{tab:smac}, and \ref{tab:full}. This is in contrast to the default parameter values used in SCIP 8.0, where the multiplier is set to 0. We believe this difference is strengthened by our computational setup, where we provide SCIP a good initial starting solution. This starting solution is often optimal and better than what initial heuristics would produce. Additionally, as we only add cuts to the root node, the distance to the cut in the direction of the primal solution will reliably point inside the feasible region. This is not the case when the search space has been partitioned like in branch and bound. We also note that over all experiments and learning techniques, for the heterogeneous data set MIPLIB, every cut measure is useful for some instances. For specific instance results, see Appendix \ref{sec:appendix_experiments}.

For NN-Verification, we see from the data presented in Figure \ref{fig:cumulative_full} that our framework, similar to SMAC, failed to perform on the homogeneous data set and capture the performance improvement that was shown to exist in Experiment \ref{subsec:lb_experiment}. We performed comparably to the random initialisation from which training began, and converged to an efficacy dominated scoring rule featuring integer support, see Table \ref{tab:full}, in a near identical manner to the constant scoring rule learned by SMAC. Interestingly, we see that the standard deviation for all measures is near 0, meaning that our framework converged to a constant output. We believe that this is due to a local minima existing for an efficacy dominated model, with default parameters being quite good, and our restriction to static based features, i.e. those available before the first LP solve, being insufficiently diverse for the more homogeneous NN-Verification.

\begin{table}[h]
\centering
\begin{tabular}{lccccccc}
& \multicolumn{3}{c}{MIPLIB} && \multicolumn{3}{c}{NN-Verification} \\
\cline{2-4} \cline{6-8}
Parameter & Mean & Median & Std Deviation && Mean & Median & Std Deviation \\
\hline
$\lambda_{1}$ (\texttt{dcd}) & 0.414 & 0.385 & 0.094 && 0.000 & 0.000 & 0.000\\
$\lambda_{2}$ (\texttt{eff}) & 0.171 & 0.157 & 0.058 && 0.592 & 0.591 & 0.002 \\
$\lambda_{3}$ (\texttt{isp}) & 0.232 & 0.244 & 0.052 && 0.408 & 0.408 & 0.002 \\
$\lambda_{4}$ (\texttt{obp}) & 0.183 & 0.211 & 0.072 && 0.000 & 0.000 & 0.000 \\
\hline
\end{tabular}
\caption{Statistics of generated parameters per instance (train + test) in Experiment \ref{subsec:adaptive_root}}
\label{tab:full}
\end{table}

\subsection{Generalisation to Branch and Bound}
\label{subsec:tree_experiment}

Until this point we have focused on root node restricted experiments and used the primal-dual difference as a surrogate for solver performance. We now deploy the best instance-dependent parameter values from the grid search in Experiment \ref{subsec:lb_experiment} and from our approach in Experiment \ref{subsec:adaptive_root} to the full solving process. We keep the same sandbox environment that we have used until this point, however we no longer limit ourselves to the root node, and we set a time limit of 7200s. 

\begin{table}[h]
\begin{subtable}{1\textwidth}
\centering
\begin{tabular}{lccccccc}
& \multicolumn{3}{c}{MIPLIB} && \multicolumn{3}{c}{NN-Verification} \\
\cline{2-4} \cline{6-8}
Metric & Instance-Seeds & \% Wins & \% Ties && Instances & \% Win & \% Ties \\
\hline
Time & 81 & 60.49 & 0.00 && 675 & 56.74 & 0.00 \\
Nodes & 73 & 58.90 & 10.96 && 628 & 56.21 & 1.59 \\
Dual bound & 180 & 80.05 & 1.66 && 18 & 72.22 & 0.00 \\
\hline
\end{tabular}
\caption{Generalisation to branch and bound of instance-dependent parameters from Experiment \ref{subsec:lb_experiment}}
\label{tab:bb_grid_search}
\end{subtable}
\bigskip
\begin{subtable}{1\textwidth}
\centering
\begin{tabular}{lccccccc}
& \multicolumn{3}{c}{MIPLIB} && \multicolumn{3}{c}{NN-Verification} \\
\cline{2-4} \cline{6-8}
Metric & Instance-Seeds & \% Wins & \% Ties && Instances & \% Win & \% Ties \\
\hline
Time & 81 & 53.09 & 0.00 && 673 & 50.37 & 0.00 \\
Nodes & 74 & 41.89 & 12.16 && 632 & 47.78 & 1.74 \\
Dual bound & 180 & 66.11 & 2.22 && 20 & 50.00 & 0.00 \\
\hline
\end{tabular}
\caption{Generalisation to branch and bound of instance-dependent parameters from Experiment \ref{subsec:adaptive_root}.}
\label{tab:bb_adaptive}
\end{subtable}
\caption{Results of generated cut selection parameters compared to SCIP default parameters. \textit{Time} is a comparison of the solution time of a run, \textit{Nodes} the number of nodes, and \textit{Dual bound} the dual bound when the time limit is hit. For \textit{Time}, instance-seed pairs are considered when at most one of the two runs (default parameters and generated parameters) hit the time limit. For \textit{Nodes}, instance-seed pairs that always solved to optimality are considered, and for \textit{Dual bound} instance-seed pairs that always hit the time limit are considered. The columns \textit{Wins} and \textit{Ties} are the percentage of instance-seed pairs for which the generated parameters outperformed or respectively tied with the default parameters under the given metric.}
\label{tab:branch_and_bound}
\end{table}

From the results presented in Table \ref{tab:branch_and_bound}, specifically Table \ref{tab:bb_grid_search}, we see that instance-dependent cut selector parameters that induce good root node performance do generalise to the larger solving process. This follows from the best grid search instance-dependent parameter values from Experiment \ref{subsec:lb_experiment} clearly outperforming the default parameter choice. We thus believe that in general, the primal-dual difference (or gap) after applying cuts is an adequate surrogate of overall solver performance for a given set of parameters. We note, however, that there exists many solution paths where this statement is not true, and many instances where dual bound progression at the root is a poor surrogate. The improvement generalisation was not as clear for our framework as observable in Table \ref{tab:bb_adaptive}, with our framework outperforming both data sets in terms of time, and losing in terms of nodes. Interestingly, over MIPLIB instance-seed pairs that time out, our framework has a better dual bound than default 66.11\% of the time.

\section{Conclusion} \label{sec:conclusion}
We presented a parametric family of MILPs together with infinitely many family-wide valid cuts. We showed for a specific cut selection rule, that any finite grid search of the parameter space will always miss all parameter values, which select integer optimal inducing cuts in an infinite amount of our instances. We then presented a reinforcement learning framework for learning cut selection parameters, and phrased cut selection in MILP as a Markov decision process. By representing MILP instances as a bipartite graph, we used policy gradient methods to train a graph convolutional neural network. 

The framework generates good performing, albeit sub-optimal, parameter values for a modified variant of SCIP's default cut scoring rule over MIPLIB 2017, with the performance being comparable to standard learning techniques, and clearly better than the random initialisation. Our framework, however, was subject to mode collapse over the NN-Verification data set, and failed to generate a diverse and well performing set of instance-dependent cut selector parameter values.

Results from our grid search experiments showed that there is a large amount of potential improvements to be made in adaptive cut selection, with a median relative primal-dual difference improvement of 7.77\% over MIPLIB and 8.29\% over NN-Verification with only 50 rounds of 10 cuts. The generalisation of these best performing instance-dependent parameter values to branch and bound then revealed a correlation between the primal-dual difference after cut rounds and overall solver performance in terms of both solution time and number of nodes. 

We suggest three key areas of further research for those wanting to build on this research. Firstly, there is a dire need for more instance sets that are sufficiently diverse, non-trivial, yet not overly difficult. Secondly, throughout this paper we restricted ourselves to individual cut measures already featured in SCIP's default rule. Further research could explore rules containing non-linear combinations of additional measures. Third and finally, we suggest that a focus on the larger selection algorithm could lead to further improved performance. For all experiments the separator algorithm's parameters were set to constant values, and we ignored other cut selector related parameters, and restricted ourselves to parallelism based filtering method. We end by noting that a major contribution of this work, the new cut selector plugin for SCIP, enables the last two key areas of further research via easy inclusion of custom cut selection algorithms in a modern MILP solver.

\section*{Acknowledgements} \label{sec:funding}
The work for this article has been conducted in the Research Campus MODAL funded by the German Federal Ministry of Education and Research (BMBF) (fund numbers 05M14ZAM, 05M20ZBM). The described research activities are funded by the Federal Ministry for Economic Affairs and Energy within the project UNSEEN (ID: 03EI1004-C).

\bibliographystyle{unsrt}
\bibliography{mybib}

\newpage
\appendix

\section{Proof of Theorem \ref{thm:main} from Section \ref{sec:theorem}}
\label{sec:proof}

For the following theorem, we will simulate a pure cutting plane approach to solving MILPs using scoring rule \eqref{eq:simple_cut_rule}. We will use custom MILPs, cutting planes, and select exactly one cut per round. Each call to the selection subroutine is called an \textit{iteration} or \textit{round}. The theorem is intended to show how a fixed cut selection rule can consistently choose ``bad'' cuts. 

\maintheorem*

The parametric MILP we use to represent our infinite family of instances is defined as follows, where $a \in \positivereals$ and $d \in [0,1]$:
\begin{align*}
\begin{split}
    \mip := \begin{cases}
    \quad \text{min} & \quad x_{1} - (10+d)x_{2} - ax_{3} \\
    & -\frac{1}{2}x_{2} + 3x_{3} \leq 0 \\
    & -x_{3} \leq 0 \\
    \quad \text{s.t.} & -\frac{1}{2}x_{1} + \frac{1}{2}x_{2} - \frac{7}{2}x_{3} \leq 0 \\
    & \frac{1}{2}x_{1} + \frac{3}{2}x_{3} \leq \frac{1}{2} \\
    & x_1 \in \ints, \quad x_2 \in \reals, \quad x_3 \in \{0,1\}
    \end{cases}
\end{split}
\end{align*}

The polytope of our MILPs LP relaxation is the convex hull of the following points:
\begin{align}
    \mathcal{X} := \{ (0,0,0), (1,0,0), (1,1,0), (\frac{-1}{2}, 3, \frac{1}{2}) \}
\end{align}

The convex hull of $\mathcal{X}$ is a 3-simplex, or alternatively a tetrahedron, see Figure \ref{fig:feasible_region} for a visualisation. For such a feasible region, we can exhaustively write out all integer feasible solutions:

\begin{figure}[h]
    \centering
    \includegraphics[scale=0.75]{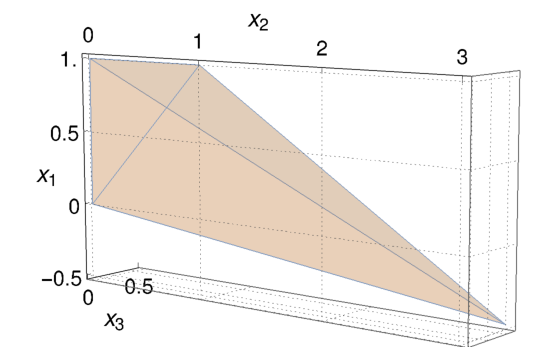}
    \caption{The feasible region of the LP relaxation of \mip.}
    \label{fig:feasible_region}
\end{figure}

\begin{lemma}\label{lem:int_feas_points}
The integer feasible set of \mip, $(a,d) \in \positivereals \times [0,1]$ is:
\begin{align*}
    \{ (0,0,0) \} \cup \{(1,x_2,0) :\forall x_2 \in [0,1]\}
\end{align*}
\end{lemma}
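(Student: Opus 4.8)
The plan is to work directly from the inequality description of \mip rather than from the vertex set $\mathcal{X}$, since the objective coefficients $a,d$ are irrelevant to feasibility and every integer feasible point must satisfy the four listed constraints together with $x_1 \in \ints$ and $x_3 \in \{0,1\}$. The argument is a short case split on $x_3$.

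First I would pin down $x_3$. The constraint $-x_3 \leq 0$ gives $x_3 \geq 0$, so the only alternative to $x_3 = 0$ is $x_3 = 1$. Assuming $x_3 = 1$, the constraint $\frac{1}{2}x_1 + \frac{3}{2}x_3 \leq \frac{1}{2}$ forces $x_1 \leq -2$, the constraint $-\frac{1}{2}x_2 + 3x_3 \leq 0$ forces $x_2 \geq 6$, and $-\frac{1}{2}x_1 + \frac{1}{2}x_2 - \frac{7}{2}x_3 \leq 0$ forces $x_2 \leq x_1 + 7 \leq 5$, a contradiction. Hence $x_3 = 0$ for every integer feasible point.

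Next, substituting $x_3 = 0$ collapses the remaining constraints to $x_2 \geq 0$ (from $-\frac{1}{2}x_2 \leq 0$), $x_2 \leq x_1$ (from $-\frac{1}{2}x_1 + \frac{1}{2}x_2 \leq 0$), and $x_1 \leq 1$ (from $\frac{1}{2}x_1 \leq \frac{1}{2}$), i.e.\ $0 \leq x_2 \leq x_1 \leq 1$. In particular $x_1 \in [0,1]$, and since $x_1 \in \ints$ we get $x_1 \in \{0,1\}$. If $x_1 = 0$ then $0 \leq x_2 \leq 0$, so $x_2 = 0$ and the point is $(0,0,0)$. If $x_1 = 1$ then $x_2$ ranges freely over $[0,1]$, since $x_2$ carries no integrality requirement, giving exactly the segment $\{(1,x_2,0) : x_2 \in [0,1]\}$. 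This establishes that the integer feasible set is contained in $\{(0,0,0)\} \cup \{(1,x_2,0) : x_2 \in [0,1]\}$.

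Finally I would verify the reverse inclusion by substituting each candidate back into the four constraints and the integrality conditions: for $(0,0,0)$ all left-hand sides equal $0$; for $(1,x_2,0)$ with $x_2 \in [0,1]$ they evaluate to $-\frac{1}{2}x_2 \leq 0$, $0 \leq 0$, $\frac{1}{2}(x_2 - 1) \leq 0$, and $\frac{1}{2} \leq \frac{1}{2}$, all satisfied, while $x_1 = 1 \in \ints$ and $x_3 = 0 \in \{0,1\}$. Since none of this used $a$ or $d$, the characterisation holds for all $(a,d) \in \positivereals \times [0,1]$. There is no substantial obstacle here; the only points requiring care are remembering that $x_2$ is continuous, so the integer feasible set is a line segment rather than a finite point set, and explicitly excluding the $x_3 = 1$ case rather than assuming it away from the picture.
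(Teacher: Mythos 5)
Your proof is correct, and it is complete at every step: the case $x_3=1$ is genuinely infeasible for the reasons you give ($x_1\le -2$, $x_2\ge 6$, $x_2\le x_1+7\le 5$), and the case $x_3=0$ collapses to $0\le x_2\le x_1\le 1$ with $x_1\in\ints$, which yields exactly $\{(0,0,0)\}\cup\{(1,x_2,0):x_2\in[0,1]\}$; the reverse-inclusion check is also right. Your route differs from the paper's in a useful way: the paper gives no explicit proof of this lemma, instead asserting that the LP relaxation is the convex hull of the four listed points $\mathcal{X}$ and that the integer feasible solutions can then be ``exhaustively written out'' by inspecting that tetrahedron (only the face $x_3=0$ can contain points with $x_3\in\{0,1\}$, and on that triangular face integrality of $x_1$ leaves the vertex $(0,0,0)$ and the edge $x_1=1$). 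Your argument works directly from the inequality description of \mip, so it does not depend on first verifying the vertex description of the relaxation, and it makes explicit the two points the geometric shortcut glosses over: that $x_3=1$ must be ruled out by the constraints rather than by the picture, and that $x_2$ is continuous, so the feasible set contains a whole segment rather than finitely many points. You also correctly note that $a$ and $d$ appear only in the objective, which is why the statement holds uniformly over $(a,d)\in\positivereals\times[0,1]$.
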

As we are dealing with linear constraints and objectives, we know that $(1,x_2,0)$, where $0<x_2<1$ cannot be optimal without both $(1,1,0)$ and $(1,0,0)$ being also optimal. We therefore simplify the integer feasible set, \integerpoints, to:
\begin{align}
    \integerpoints := \{ (0,0,0), (1,0,0), (1,1,0)\}
\end{align}

At each iteration of adding cuts we will always present exactly three candidate cuts. We name these cuts as follows:
\begin{itemize}
    \item The `good cut', denoted \goodcut: Applying this cut immediately results in the next LP solution being integer optimal.
    \item The `integer support cut', denoted \intsupcut{n}: Applying this cut will result in a new LP solution barely better than the previous iteration. The cut has very high integer support as the name suggests, and would be selected if $\lambda$ from \eqref{eq:simple_cut_rule} is set to a high value. The superscript $n$ refers to the iteration number.
    \item The `objective parallelism cut', denoted \objparalcut{n}: Applying this cut will also result in a new LP solution barely better than the previous iteration. The cut has very high objective parallelism, and would be selected if $\lambda$ from \eqref{eq:simple_cut_rule} is set to a low value. The superscript $n$ refers to the iteration number.
\end{itemize} 

The cuts are defined as follows, where \goodcut has an additional property of it being selected in the case of a scoring tie: 
\begin{align}
    \goodcut:& -10x_1 + 10x_2 + x_3 \leq 0 \label{eq:gc} \\
    \intsupcut{n}:&  -x_1 + x_3 \leq 1 - \epsilon_n \label{eq:isc} \\
    \objparalcut{n}:&  -x_1 + 10x_2 \leq \frac{61}{2} - \epsilon_n \label{eq:opc} 
\end{align}

We use $\epsilon_n$ here to denote a small shift of the cut, with a greater $\epsilon_n$ resulting in a deeper cut. We define $\epsilon_n$ as follows: 
\begin{align}
    \begin{split}
    0< \epsilon_i &< \epsilon_{i+1} \quad \forall i \in \mathbb{N} \\
    \lim_{n\xrightarrow{}\infty} &\epsilon_n = 0.1
    \end{split}
\end{align}

A better overview of the proof of Theorem \ref{thm:main} can now be imagined. At each cut selection round we present three cuts, where for high values of $\lambda$, \intsupcut{n} is selected, and for low values \objparalcut{n} is selected. As the scoring rule \eqref{eq:simple_cut_rule} is linear w.r.t. $\lambda$, our aim is to controllably sandwich the intermediate values of $\lambda$ that will select \goodcut. Specifically, for any given $\lambdadis$, we aim to construct an infinite amount of parameter values for $a$ and $d$ s.t. the intermediate values of $\lambda$ all belong to $\lambdadisgaps$.

\begin{lemma} \label{lem:vertex_set}
The vertex set of the LP relaxation of $\mip, (a,d) \in \reals_{\geq 0} \times [0,1]$, after having individually applied cuts \eqref{eq:gc}, \eqref{eq:isc}, or \eqref{eq:opc} are, respectively: \\
\begin{align}
\goodcutpoints := \integerpoints \cup \{ (\frac{61}{91}, \frac{60}{91}, \frac{10}{91}) \} \\
\intsupcutpoints{n} := \integerpoints \cup \{ (\frac{-1}{2} + \frac{3\epsilon_n}{4}, 3-\frac{3\epsilon_n}{2}, \frac{1}{2} -\frac{\epsilon_n}{4}), (\frac{-1}{2} + \frac{3\epsilon_n}{4}, 3-\epsilon_n, \frac{1}{2} -\frac{\epsilon_n}{4}), (\frac{-1}{2} + \frac{\epsilon_n}{2}, 3-3\epsilon_n, \frac{1}{2} -\frac{\epsilon_n}{2}) \} \\
\objparalcutpoints{n} := \integerpoints \cup \{ (\frac{-1}{2} + \frac{\epsilon_n}{21}, 3 - \frac{2\epsilon_n}{21}, \frac{1}{2} - \frac{\epsilon_n}{63}), (\frac{-1}{2} + \frac{3\epsilon_n}{43}, 3 - \frac{4\epsilon_n}{43}, \frac{1}{2} - \frac{\epsilon_n}{43}), (\frac{-1}{2} + \frac{\epsilon_n}{61}, 3 - \frac{6\epsilon_n}{61}, \frac{1}{2} - \frac{\epsilon_n}{61}) \}
\end{align}
\end{lemma}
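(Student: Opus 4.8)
This lemma is a purely polyhedral computation. Write $P := \mathrm{conv}(\mathcal{X})$ for the LP-relaxation polytope of \mip; for a cut $\coefficients$ let $H^-$ denote the corresponding halfspace and $H$ its bounding hyperplane. The plan is to describe the vertex set of $P \cap H^-$ for each of $\goodcut$, $\intsupcut{n}$, $\objparalcut{n}$ in turn. First I would record the combinatorics of $P$: since $\mathcal{X}$ is a set of four affinely independent points and $P$ is their $3$-simplex (as already stated), every pair of points of $\mathcal{X}$ spans an edge, so $P$ has precisely the $\binom{4}{2}=6$ edges joining vertices of $\mathcal{X}$; noting that each defining inequality of \mip is tight at exactly three of the four points of $\mathcal{X}$ also confirms that those four inequalities are exactly the facets of $P$.

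Second, for each of the three cuts I evaluate (left-hand side) $-$ (right-hand side) at the four points of $\mathcal{X}$. A direct calculation gives, in every case, that exactly one vertex --- namely $(-\tfrac12,3,\tfrac12)$ --- violates the cut, while the three points of \integerpoints satisfy it; more precisely, for \goodcut the points $(0,0,0)$ and $(1,1,0)$ lie \emph{on} $H$ whereas $(1,0,0)$ lies strictly inside $H^-$, and for $\intsupcut{n}$ and $\objparalcut{n}$ all three points of \integerpoints lie strictly inside $H^-$ (this uses $0<\epsilon_n<0.1$).

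Third, I apply the standard description of $P\cap H^-$: its vertices are the vertices of $P$ that satisfy the cut, together with, for each edge $[u,v]$ of $P$ with $u$ strictly inside $H^-$ and $v$ strictly violating it, the point $[u,v]\cap H$. By the previous step, for \goodcut the only such transversal edge is the one from $(-\tfrac12,3,\tfrac12)$ to $(1,0,0)$, so $\goodcutpoints$ equals \integerpoints plus a single new vertex, and parametrising that edge and solving one linear equation in the parameter yields $(\tfrac{61}{91},\tfrac{60}{91},\tfrac{10}{91})$. For $\intsupcut{n}$ and $\objparalcut{n}$, all three edges incident to $(-\tfrac12,3,\tfrac12)$ are transversal, so each of $\intsupcutpoints{n}$ and $\objparalcutpoints{n}$ equals \integerpoints together with three new vertices; parametrising each of those edges outward from $(-\tfrac12,3,\tfrac12)$ and solving the single resulting linear equation --- whose solution is affine in $\epsilon_n$ and lies strictly between the endpoints since $\epsilon_n<0.1$ --- reproduces exactly the three listed points in each case.

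None of this is hard; the work is routine linear algebra. The one point needing care --- and the only thing I would call an obstacle --- is \emph{completeness}: verifying that the stated lists exhaust the vertices and that no spurious vertices appear. This is handled by the first (combinatorial) step: all edges of $P$ not incident to $(-\tfrac12,3,\tfrac12)$ have both endpoints inside $H^-$, hence contribute nothing new, and in the \goodcut case the edge $[(0,0,0),(1,1,0)]$ lies entirely in $H$, so $P\cap H^-$ remains a $3$-simplex with exactly the four listed vertices rather than acquiring a sixth --- which is precisely why the vertex classification in the second step must distinguish ``on $H$'' from ``strictly inside $H^-$''.
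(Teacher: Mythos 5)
Your proof is correct, and I verified the arithmetic: the cut value at $(-\tfrac12,3,\tfrac12)$ exceeds the right-hand side for each of \goodcut, \intsupcut{n}, \objparalcut{n}, while the three points of \integerpoints satisfy each cut (with $(0,0,0)$ and $(1,1,0)$ lying exactly on the \goodcut hyperplane), and parametrising the edges out of $(-\tfrac12,3,\tfrac12)$ reproduces exactly the listed intersection points, e.g.\ $t=\tfrac{20}{91}$ on the edge to $(1,0,0)$ gives $(\tfrac{61}{91},\tfrac{60}{91},\tfrac{10}{91})$. The paper's own proof is a one-line appeal to direct computation --- add each cut to \mip and recompute the vertex set, a calculation the authors carry out symbolically in Mathematica --- so in substance you are performing the same vertex computation, but you replace the black-box computation with an explicit polyhedral argument: the relaxation is a $3$-simplex whose four inequalities are facets, only the fractional vertex is cut off, and the new vertices arise precisely as intersections of the cut hyperplane with the edges that strictly cross it. What your route buys is a hand-checkable completeness guarantee (no listed vertex is spurious and none is missing, including the boundary case where the \goodcut hyperplane contains the edge $[(0,0,0),(1,1,0)]$ and hence contributes no new vertex), which the paper leaves implicit in the software output; what the paper's route buys is brevity and uniformity with the other Mathematica-verified lemmas. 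One small point worth making explicit if you write this up: a transversal-edge intersection point is indeed a vertex of the truncated polytope because it satisfies with equality the two facet inequalities defining the edge plus the cut, and these three normals are linearly independent since the cut strictly separates the edge's endpoints.
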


\begin{proof}
Apply \goodcut, \intsupcut{n}, and \objparalcut{n} to \mip individually and then compute the vertices of the convex hull of the LP relaxation.
\end{proof}


We note that because both integer support and objective parallelism do not depend on the current LP solution, iteratively applying deeper cuts of the same kind would leave the cut's scores unchanged. Thus, provided they do not separate any integer points and continue to cut off the LP solution, deeper cuts of the same kind can be recursively applied. This is why both \intsupcut{n} and \objparalcut{n} have a superscript. 

After applying a cut of one kind, e.g. \intsupcut{n}, we cannot always simply increment $n$ in the other cut, e.g. \objparalcut{n+1}. This is because \objparalcut{n+1} does not guarantee separation of the now new LP solution in problem $\mip \cap \intsupcut{n}$ for all sequences of $\{\epsilon_n, \epsilon_{n+1}\}$. Instead, we create a variant of \objparalcut{n+1}, namely $\widehat{\objparalcut{n+1}}$, which will always entirely remove the facet of the LP created by adding \intsupcut{n} independent of how the series of $\epsilon_n$ values increase. Once again, we note that as only the RHS values are changing, the score for all cuts within the same type remain unchanged, and thus no two cuts from different types can be applied. We have proven our results using Mathematica \cite{Mathematica}, and a complete notebook containing step-by-step instructions can be found at \url{https://github.com/Opt-Mucca/Adaptive-Cutsel-MILP}. Below we will outline the necessary cumulative lemmas to prove Theorem \ref{thm:main}, and summarise the calculations we have taken to achieve each step. 

\begin{lemma} \label{lem:dom_obj_cut}
Having applied the cut \intsupcut{n} to $\mip, (a,d) \in \reals_{\geq 0} \times [0,1]$, a new facet is created. Applying either \goodcut or a deeper variant of \objparalcut{n+1} cuts off that facet. The deeper variant, denoted $\widehat{\objparalcut{n+1}}$, which differs from \objparalcut{n+1} only by the RHS value is defined as:
\begin{align} \label{eq:dom_obj_paral_cut}
   \widehat{\objparalcut{n+1}} : -x_1 + 10x_2 \leq \frac{61}{2} - 31\epsilon_n
\end{align}
\end{lemma}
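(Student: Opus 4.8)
The plan is to read the relevant facet straight off Lemma~\ref{lem:vertex_set} and then reduce both assertions to finitely many sign checks, all independent of $(a,d)$ (note that \intsupcut{n}, \goodcut, $\widehat{\objparalcut{n+1}}$ and the vertex list of Lemma~\ref{lem:vertex_set} do not involve the objective, so the argument is uniform over the whole family). By Lemma~\ref{lem:vertex_set}, once \intsupcut{n} has been applied the LP relaxation of \mip has vertex set \integerpoints together with the three points $v_1 = (-\tfrac12 + \tfrac{3\epsilon_n}{4},\, 3-\tfrac{3\epsilon_n}{2},\, \tfrac12 -\tfrac{\epsilon_n}{4})$, $v_2 = (-\tfrac12 + \tfrac{3\epsilon_n}{4},\, 3-\epsilon_n,\, \tfrac12 -\tfrac{\epsilon_n}{4})$, $v_3 = (-\tfrac12 + \tfrac{\epsilon_n}{2},\, 3-3\epsilon_n,\, \tfrac12 -\tfrac{\epsilon_n}{2})$. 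First I would check that each $v_j$ makes $-x_1 + x_3 = 1-\epsilon_n$ tight while every point of \integerpoints satisfies it strictly, so the face on which \intsupcut{n} is active is exactly $\mathrm{conv}\{v_1,v_2,v_3\}$; a one-line check that $v_2-v_1$ and $v_3-v_1$ are linearly independent shows this face is two-dimensional, hence a genuine new facet of the (still full-dimensional) polytope. This settles the first claim and pins down the three points that the two cuts must separate.

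Next I would dispatch \goodcut by substituting $v_1,v_2,v_3$ into $-10x_1 + 10x_2 + x_3 \le 0$. Each left-hand side collapses to $\tfrac{71}{2} - c_j\epsilon_n$ with $c_j \in \{\tfrac{91}{4}, \tfrac{71}{4}, \tfrac{71}{2}\}$, all at most $\tfrac{71}{2}$, so for every admissible $\epsilon_n \in (0, 0.1)$ the value is at least $\tfrac{71}{2}(1-\epsilon_n) > 0$: all three facet vertices are strictly cut off, hence the facet is removed. Since \goodcut is already known to be valid on \integerpoints, nothing further is needed for this part.

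For $\widehat{\objparalcut{n+1}}: -x_1 + 10x_2 \le \tfrac{61}{2} - 31\epsilon_n$ I would first confirm validity by evaluating the left-hand side at $(0,0,0)$, $(1,0,0)$, $(1,1,0)$, obtaining $0$, $-1$, $9$, all comfortably below $\tfrac{61}{2} - 31\epsilon_n$ when $\epsilon_n < 0.1$, so no feasible integer point is separated. Then I substitute $v_1,v_2,v_3$: the left-hand sides become $\tfrac{61}{2} - c'_j\epsilon_n$ with $c'_j \in \{\tfrac{63}{4}, \tfrac{43}{4}, \tfrac{61}{2}\}$, and since each $c'_j < 31$ and $\epsilon_n > 0$ we get $\tfrac{61}{2} - c'_j\epsilon_n > \tfrac{61}{2} - 31\epsilon_n$ at every vertex, so the whole facet is again cut off; as $\widehat{\objparalcut{n+1}}$ differs from $\objparalcut{n+1}$ only in its right-hand side, it is the advertised deeper variant.

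The content here is verification rather than a genuine idea, so the only real care point is the constant $31$ in \eqref{eq:dom_obj_paral_cut}: it must exceed the largest slack coefficient $\tfrac{61}{2}$ arising at $v_3$ so that separation holds over the full range of $\epsilon_n$ and independently of how the sequence $\{\epsilon_n\}$ climbs toward $0.1$, yet stay small enough that $\tfrac{61}{2} - 31\epsilon_n$ never drops to the integer-point value $9$. Making both halves of that trade-off explicit — rather than reusing $\objparalcut{n+1}$ with $\epsilon_{n+1}$, which need not separate the new vertex — is exactly what the later iteration-stitching argument needs.
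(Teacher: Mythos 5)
Your proposal is correct and follows essentially the same route as the paper's proof: identify the new facet's vertex set as $\intsupcutpoints{n} \setminus \integerpoints$, check that \goodcut and $\widehat{\objparalcut{n+1}}$ strictly separate all three of its vertices (your slack coefficients $63/4, 43/4, 61/2$ match the paper's threshold $\epsilon' > \tfrac{61\epsilon_n}{2}$, which $31\epsilon_n$ exceeds), and verify no point of \integerpoints is cut off when $\epsilon_n < 0.1$. The only difference is presentational — you verify the constant $31$ directly and add the explicit dimension check that the tight face is a genuine facet, whereas the paper first derives the minimal admissible $\epsilon'$ and then picks $31\epsilon_n$.
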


\begin{proof}
One can first verify that the vertex set of the facet is \intsupcutpoints{n} $\setminus$ \integerpoints. One can then find the smallest $\epsilon'$ s.t the following cut is valid for all $\mathbf{x} \in \intsupcutpoints{n} \setminus \integerpoints$:
\begin{align*}
    -x_1 + 10x_2 \leq \frac{61}{2} - \epsilon '
\end{align*}
The statement is valid for all $\epsilon' > \frac{61\epsilon_n}{2}$, and we arbitrarily select $\epsilon' = 31\epsilon_n$. One can also check that \goodcut dominates \intsupcut{n} by seeing that it separates all vertices of $\intsupcutpoints{n} \setminus \integerpoints$ for all $n \in \mathbb{N}$. Finally, we need to ensure that no integer solution is cut off. We can verify this by checking that every $\mathbf{x} \in \integerpoints$ satisfies \eqref{eq:dom_obj_paral_cut}. This statement holds whenever $\epsilon_n < 0.1$. Therefore $\epsilon' = 31\epsilon_n$ is valid, and we arrive at the cut $\widehat{\objparalcut{n+1}}$.
\end{proof}

\begin{lemma}
Having applied the cut \objparalcut{n} to $\mip, (a,d) \in \reals_{\geq 0} \times [0,1]$, a new facet is created. Applying \intsupcut{n+1} or \goodcut cuts off that facet. \label{lem:dom_int_cut}
\end{lemma}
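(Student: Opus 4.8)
The plan is to reproduce, in the ``dual'' configuration, the argument used for Lemma~\ref{lem:dom_obj_cut}: identify the facet of the LP relaxation that is created when \objparalcut{n} is added to \mip, and then exhibit two inequalities — \intsupcut{n+1} and \goodcut — that strictly separate every vertex of that facet while keeping all of \integerpoints feasible. The one point worth flagging in advance is an \emph{asymmetry} with Lemma~\ref{lem:dom_obj_cut}: there, the standard cut \objparalcut{n+1} was not deep enough and a modified right-hand side $\widehat{\objparalcut{n+1}}$ was needed; here the unmodified cut \intsupcut{n+1} already suffices, and the proof should make transparent why.

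First I would pin down the facet. By Lemma~\ref{lem:vertex_set} the LP relaxation of $\mip \cap \objparalcut{n}$ is $\mathrm{conv}(\objparalcutpoints{n})$, a full-dimensional polytope in $\reals^3$. Substituting the three points of $\objparalcutpoints{n} \setminus \integerpoints$ into $-x_1 + 10x_2$ shows each attains the value $\tfrac{61}{2} - \epsilon_n$, i.e.\ each lies on the supporting hyperplane of \objparalcut{n}, whereas none of $(0,0,0),(1,0,0),(1,1,0)$ does. A quick affine-independence check of those three points (e.g.\ comparing two difference vectors) then shows they span a $2$-face; since a proper face of a $3$-polytope has dimension at most $2$, this face is exactly the newly created facet, and its vertex set is $\objparalcutpoints{n} \setminus \integerpoints$.

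Next I would verify the two separations. For \intsupcut{n+1}, evaluating $-x_1 + x_3$ at the three facet vertices from Lemma~\ref{lem:vertex_set} gives values of the form $1 - c\,\epsilon_n$ with $c \in \{\tfrac{4}{63},\tfrac{4}{43},\tfrac{2}{61}\}$, each strictly below $1$; hence $-x_1 + x_3 = 1 - c\epsilon_n > 1 - \epsilon_n > 1 - \epsilon_{n+1}$ using $0 < \epsilon_n < \epsilon_{n+1}$, so \intsupcut{n+1} strictly separates every facet vertex. This is precisely where the asymmetry with Lemma~\ref{lem:dom_obj_cut} lives: there the corresponding slacks were multiples of $\epsilon_n$ with coefficient \emph{exceeding} $1$ (forcing $\epsilon' > \tfrac{61}{2}\epsilon_n$ and the deeper variant), whereas here every coefficient is $<1$ and monotonicity of $\{\epsilon_n\}$ alone closes the gap. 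For \goodcut, plugging the same three vertices into $-10x_1 + 10x_2 + x_3$ yields values of the form $\tfrac{71}{2} - c'\epsilon_n$ (for instance $\tfrac{71}{2} - \tfrac{71}{43}\epsilon_n$ and $\tfrac{71}{2} - \tfrac{71}{61}\epsilon_n$), all strictly positive because $\epsilon_n < 0.1$, so \goodcut also separates every facet vertex. Finally, both \intsupcut{n+1} and \goodcut belong to the family-wide valid cuts, so no point of \integerpoints is removed; this is in any case immediate by evaluating the two inequalities at $(0,0,0)$, $(1,0,0)$, $(1,1,0)$ and using $\epsilon_{n+1} < 0.1$.

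I do not expect a genuine obstacle: the lemma collapses to a handful of linear substitutions. The only claim needing slight care is that $\objparalcutpoints{n} \setminus \integerpoints$ really is the vertex set of a \emph{single} facet rather than a lower-dimensional face or a union of faces, which the affine-independence argument settles. As with the neighbouring lemmas, all of these computations can be discharged symbolically, and have been, in the accompanying Mathematica notebook~\cite{Mathematica}.
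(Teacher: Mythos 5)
Your proposal is correct and follows essentially the same route as the paper: identify the new facet as $\mathrm{conv}(\objparalcutpoints{n}\setminus\integerpoints)$, compute the separation threshold $\epsilon' > \tfrac{4\epsilon_n}{43}$, note that $\epsilon_{n+1} > \epsilon_n > \tfrac{4\epsilon_n}{43}$ makes the unmodified \intsupcut{n+1} suffice, check that \goodcut also separates all facet vertices, and verify validity on \integerpoints via $\epsilon_{n+1} < 0.1$. Your explicit affine-independence check and the remark on why no deepened variant is needed here (unlike Lemma~\ref{lem:dom_obj_cut}) simply spell out what the paper leaves implicit.
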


\begin{proof}
This follows the same structure as the proof of Lemma \ref{lem:dom_obj_cut}. We get that $\epsilon' > \frac{4\epsilon_n}{43}$, and that $\epsilon' = \epsilon_{n+1}$ is valid w.r.t. the integer constraints.
\end{proof}

Using our definition of integer support and objective parallelism in \eqref{eq:intsup_rule}-\eqref{eq:objparal_rule}, we derive the scores for each cut from the simple cut selection scoring rule \eqref{eq:simple_cut_rule}. We let \mipobjective denote the vector of coefficients from the objective of \mip, $(a,d) \in \positivereals \times [0,1]$. The integer support and objective parallelism values of each cut are as follows:
\begin{align}
    \isp{\goodcut} &= \frac{2}{3} \label{eq:intsup_gc} \\
    \isp{\intsupcut{n}} &= 1 \quad \forall n \in \mathbb{N}\\
    \isp{\objparalcut{n}} &= \frac{1}{2} \quad \forall n \in \mathbb{N} \\
    \obp{\goodcut}{\mipobjective} &= \frac{110+a+10d}{\sqrt{201}\sqrt{1+a^2 + (10+d)^2}} \label{eq:obj_paral_gc} \\
    \obp{\intsupcut{n}}{\mipobjective} &= \frac{1+a}{\sqrt{2}\sqrt{1+a^2 + (10+d)^2}} \quad \forall n \in \mathbb{N} \label{eq:obj_paral_isc} \\
    \obp{\objparalcut{n}}{\mipobjective} &= \frac{101+10d}{\sqrt{101}\sqrt{1+a^2 + (10+d)^2}} \quad \forall n \in \mathbb{N} \label{eq:obj_paral_opc}
\end{align}

Using our simplified cut scoring rule as defined in \eqref{eq:simple_cut_rule}, we derive the necessary conditions defining the $\lambda$ values, which assign \goodcut a score at least as large as the other cuts.

\begin{lemma} \label{lem:goodcut_lambdas}
\goodcut is selected and added to \mip, $(a,d) \in \reals_{\geq 0} \times [0,1]$, using scoring rule \eqref{eq:simple_cut_rule} if and only if a $\lambda$ is used that satisfies the following conditions:
{\normalfont
\begin{align}
    \lambda * \isp{\goodcut} + (1-\lambda) * \obp{\goodcut}{\mipobjective} \geq \lambda * \isp{\intsupcut{n}} + (1-\lambda) * \obp{\intsupcut{n}}{\mipobjective} \label{eq:gc_over_isc} \\
    \lambda * \isp{\goodcut} + (1-\lambda) * \obp{\goodcut}{\mipobjective} \geq \lambda * \isp{\objparalcut{n}} + (1-\lambda) * \obp{\objparalcut{n}}{\mipobjective} \label{eq:gc_over_opc}
\end{align}}
\end{lemma}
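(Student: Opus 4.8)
The plan is to unwind the definition of the cut selector and exploit the fact that both measures \texttt{isp} and \texttt{obp} are insensitive to the right-hand side of a cut. First I would recall that in every round the selector adds the single candidate cut of maximum \texttt{simple\_cut\_score}, and that by assumption \goodcut wins any tie; since for a fixed objective \mipobjective and fixed $\lambda$ the rule \eqref{eq:simple_cut_rule} is just the real number $\score{\coefficients}=\lambda\cdot\isp{\coefficients}+(1-\lambda)\cdot\obp{\coefficients}{\mipobjective}$, the whole question reduces to comparing the scores of the three presented candidates.

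Next I would observe that, although the candidates in a given round may be \goodcut together with deeper variants such as \intsupcut{n} and $\widehat{\objparalcut{n+1}}$, these variants differ from \intsupcut{n} and \objparalcut{n} only in their right-hand side. Because \isp{\coefficients} depends only on the zero/nonzero pattern of $\alpha_1,\dots,\alpha_n$ and \obp{\coefficients}{\mipobjective} only on the direction of that coefficient vector — neither involving $\alpha_0$ — every integer-support-type cut has the score of \intsupcut{n} and every objective-parallelism-type cut has the score of \objparalcut{n}, and moreover these scores are the $n$-independent quantities already computed in \eqref{eq:intsup_gc}--\eqref{eq:obj_paral_opc}. Hence the comparison for any round, and for any $n$, is literally the comparison of \score{\goodcut} with \score{\intsupcut{n}} and \score{\objparalcut{n}}.

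Combining the two observations, \goodcut is the cut selected and added to \mip precisely when $\score{\goodcut}\geq\score{\intsupcut{n}}$ and $\score{\goodcut}\geq\score{\objparalcut{n}}$: for the ``only if'' direction, a strictly smaller score would lose the max to one of the other two candidates; for the ``if'' direction, weak dominance plus the tie-breaking convention in favour of \goodcut forces its selection. Substituting the definition of \texttt{simple\_cut\_score} into these two inequalities yields exactly \eqref{eq:gc_over_isc} and \eqref{eq:gc_over_opc}. I do not expect a genuine obstacle here; the only points needing care are the tie-breaking convention — which is what upgrades the statement from a strict inequality to a true ``if and only if'' — and the remark that shifting the right-hand side leaves every score unchanged, so that the characterisation written for one fixed $n$ already governs every cut-selection round.
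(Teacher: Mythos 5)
Your proposal is correct and follows essentially the same route as the paper's proof: it relies on the scores being independent of the right-hand-side shift $\epsilon_n$, the one-cut-per-round maximum-score selection, and the tie-breaking convention in favour of \goodcut to turn the weak inequalities \eqref{eq:gc_over_isc}--\eqref{eq:gc_over_opc} into the stated equivalence. Your explicit remark that \isp and \obp ignore $\alpha_0$, so deeper variants score identically, is just a slightly more detailed version of the paper's observation.
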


\begin{proof}
We know that the integer support and objective parallelism do not depend on $\epsilon_n$ as seen in equations \eqref{eq:intsup_gc}-\eqref{eq:obj_paral_opc}. Our cut selector rule also selects exactly one cut per iteration, namely the largest scoring cut. Therefore, whenever $\lambda$ satisfies constraints \eqref{eq:gc_over_isc} and \eqref{eq:gc_over_opc}, \goodcut will be selected over both \objparalcut{n} and \intsupcut{n}, and applied to \mip. If $\lambda$ does not satisfy constraints \eqref{eq:gc_over_isc} and \eqref{eq:gc_over_opc}, then \goodcut is not the largest scoring cut and will not be applied to \mip. 
\end{proof}
The inequalities \eqref{eq:gc_over_isc}-\eqref{eq:gc_over_opc} define the region, \region, which exactly contains all tuples $(a,d,\lambda) \in \positivereals \times [0,1]^{2}$ that result in \goodcut being the best scoring cut. The region, \region is visualised in Figure \ref{fig:regionplot3d}. We define the function $\regionfunc{a}{d}$ for all $(a,d) \in \reals_{\geq 0} \times [0,1]$, which maps any pairing of $(a,d)$ to the set of $\lambda$ values contained in \region for the corresponding fixed $(a,d)$ values. 
\begin{align}
    \regionfuncnoargs : \reals_{\geq 0} \times [0,1] \xrightarrow{} \powerset([0,1]) 
\end{align}
Here \powerset refers to the power set. We are interested in \region as we believe that we can find a continuous function that contains all $(a,d,\lambda) \in \positivereals \times [0,1]^{2}$ pairings, which score all cuts equally. Using this function, we can find for a fixed $d \in [0,1]$, the values of $a \in \positivereals$ that would result in a $\lambda \in [0,1]$ value that scores all cuts equally. By perturbing our value of $a \in \positivereals$, we aim to generate an infinite amount of $\lambda \in [0,1]$ values that score \goodcut the largest. Then, by choosing different values of $d \in [0,1]$ originally, we aim to find such an infinite set of $\lambda$ values that can adaptively lie between any given finite discretisation of $[0,1]$.

\begin{figure}
    \centering
    \includegraphics[scale=0.7]{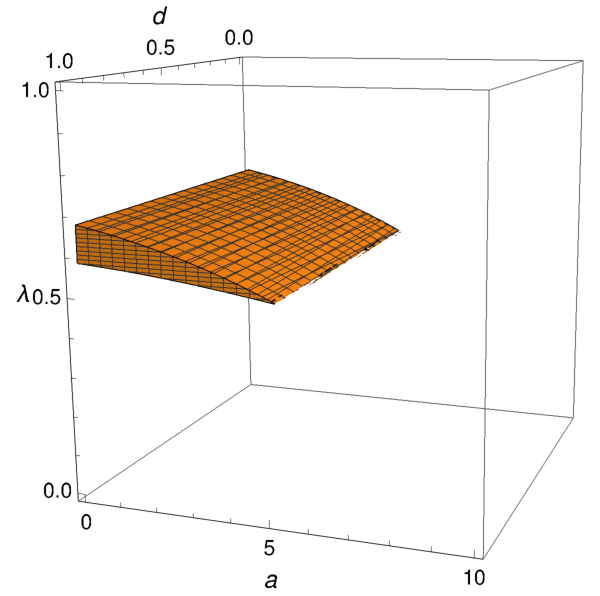}
    \caption{The region, \region, where \goodcut is scored at least as large as both \objparalcut{n} and \intsupcut{n} under cut selection rule \eqref{eq:simple_cut_rule}.}
    \label{fig:regionplot3d}
\end{figure}

\begin{lemma} \label{lem:maxa}
There exists a closed form symbolic solution for the maximum value of $a$ in terms of $d$ over \region. We denote this maximum value of $a$ as a function over $d$, namely $\maxa{d}$, $d \in [0,1]$.
\end{lemma}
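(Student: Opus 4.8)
The plan is to eliminate $\lambda$ from the description of \region supplied by Lemma \ref{lem:goodcut_lambdas}, reducing the two defining inequalities \eqref{eq:gc_over_isc}--\eqref{eq:gc_over_opc} to a single inequality that is affine in $a$; its threshold value will be $\maxa{d}$. First I would write $D := \sqrt{1+a^2+(10+d)^2} > 0$ and, using \eqref{eq:obj_paral_gc}--\eqref{eq:obj_paral_opc}, abbreviate $g := \obp{\goodcut}{\mipobjective}$, $s := \obp{\intsupcut{n}}{\mipobjective}$, $p := \obp{\objparalcut{n}}{\mipobjective}$; each is an affine function of $(a,d)$ divided by a fixed positive multiple of $D$, and none depends on $\epsilon_n$ or the round index $n$. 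Substituting the support values \eqref{eq:intsup_gc}--\eqref{eq:obj_paral_opc} into \eqref{eq:gc_over_isc}--\eqref{eq:gc_over_opc} and rearranging, the first inequality becomes a one-sided upper bound $\lambda \le \lambda_{ub}(a,d) := \tfrac{g-s}{\,g-s+1/3\,}$ (because $\isp{\goodcut} = \tfrac23 < 1 = \isp{\intsupcut{n}}$, so increasing $\lambda$ hurts \goodcut here), valid in the regime $g > s$; and the second becomes a one-sided lower bound $\lambda \ge \lambda_{lb}(a,d) := \tfrac{p-g}{\,p-g+1/6\,}$ (because $\isp{\goodcut} = \tfrac23 > \tfrac12 = \isp{\objparalcut{n}}$), valid in the regime $p > g$.

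Next I would project out $\lambda$. For fixed $(a,d)$ the slice $\regionfunc{a}{d}$ is non-empty iff $\lambda_{lb}(a,d) \le \lambda_{ub}(a,d)$, both endpoints lying strictly inside $(0,1)$ in the above regime. Since $t \mapsto t/(t+c)$ is strictly increasing on $[0,\infty)$ for every $c > 0$, cross-multiplying turns this into the equivalent polynomial inequality $3g \ge 2p + s$; clearing the strictly positive common factor $D$ then leaves
\begin{align*}
\frac{3(110+a+10d)}{\sqrt{201}}\ \ge\ \frac{2(101+10d)}{\sqrt{101}} + \frac{1+a}{\sqrt{2}},
\end{align*}
which is affine in $a$ with coefficient $\tfrac{3}{\sqrt{201}} - \tfrac{1}{\sqrt{2}} < 0$. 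Dividing by this negative number flips the inequality into $a \le \maxa{d}$, with
\begin{align*}
\maxa{d}\ =\ \frac{\ \frac{2(101+10d)}{\sqrt{101}} + \frac{1}{\sqrt{2}} - \frac{3(110+10d)}{\sqrt{201}}\ }{\ \frac{3}{\sqrt{201}} - \frac{1}{\sqrt{2}}\ },
\end{align*}
a closed-form symbolic expression in $d$ built only from $\sqrt{2}, \sqrt{101}, \sqrt{201}$ (and in fact affine in $d$, since $d$ enters only the numerator and only linearly). This is the required formula.

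It remains to discharge the side conditions, and this is where the real care lies. I would verify that on the box $d \in [0,1]$, $a \in [0, \maxa{d}]$ one genuinely has $g > s > 0$ and $g < p$, so that the regime assumptions used when rearranging \eqref{eq:gc_over_isc}--\eqref{eq:gc_over_opc} hold and no feasible $\lambda$ is clipped at an endpoint of $[0,1]$; each of $g > s$ and $g < p$ is itself affine in $a$ after multiplying through by $D$, so the check reduces to evaluating an affine function of $d$ at $d \in \{0,1\}$. I would likewise check $\maxa{d} > 0$ throughout $[0,1]$ by noting the numerator above is affine and decreasing in $d$ and already negative at $d = 0$, so numerator and denominator are both negative, and the slices $\regionfunc{a}{d}$ are non-empty for every $d$. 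The main obstacle is precisely this bookkeeping: one must confirm that the ``generic'' reduction $3g \ge 2p + s$ is not an artefact of a case split — i.e., that for $a$ just below $\maxa{d}$ we are still in the regime $g > s > 0$, $g < p$ (both one-sided $\lambda$-bounds active and interior to $[0,1]$) rather than in a degenerate regime, namely $g \le s$ (where the slice is empty) or $g \ge p$ (where \eqref{eq:gc_over_opc} is vacuous and would not bound $a$). Once those regimes are excluded for $a \le \maxa{d}$, the displayed formula equals $\sup\{a : \exists\, \lambda,\ (a,d,\lambda) \in \region\}$, and the supremum is attained (equality forcing $\lambda_{lb} = \lambda_{ub}$) because the defining inequalities are non-strict.
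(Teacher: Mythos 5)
Your derivation is essentially correct and arrives at the right closed form, but by a genuinely different route from the paper: the paper's proof simply hands the problem $\argmax_{a,\lambda}\{a \mid (a,d,\lambda)\in\region\}$ to Mathematica and prints the resulting expression in Appendix \ref{sec:appendix_mathematica}, whereas you eliminate $\lambda$ by hand. Your reduction is the right one: writing $g,s,p$ for the objective-parallelism values \eqref{eq:obj_paral_gc}--\eqref{eq:obj_paral_opc}, inequality \eqref{eq:gc_over_isc} gives $\lambda\le (g-s)/(g-s+\tfrac13)$ and \eqref{eq:gc_over_opc} gives $\lambda\ge (p-g)/(p-g+\tfrac16)$ in the regime $g>s$, $p>g$, and cross-multiplying the compatibility condition collapses it to $2(p-g)\le g-s$, i.e.\ $3g\ge 2p+s$, an affine inequality in $a$ with negative leading coefficient $3/\sqrt{201}-1/\sqrt{2}$; its threshold is your displayed $\maxa{d}$, which agrees with the paper's formula (both are affine in $d$ and evaluate to $\approx 4.98$ at $d=0$ and $\approx 5.24$ at $d=1$). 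The incidental appeal to monotonicity of $t\mapsto t/(t+c)$ is unnecessary (the two constants differ), but the cross-multiplication itself is valid. A benefit of your route over the paper's is that it produces the explicit $\lambda$-bounds and the coincidence of the two bounds exactly at $a=\maxa{d}$ as by-products, essentially pre-proving parts of Lemmas \ref{lem:interval_bounds}, \ref{lem:maxa_single_point_region} and \ref{lem:unique_maxa} that the paper again verifies computationally.

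One step of your wrap-up is stated too weakly, however. You propose to verify the regime $g>s$, $p>g$ only on $\{(a,d): 0\le a\le\maxa{d},\ d\in[0,1]\}$ and then conclude that $\maxa{d}$ equals $\sup\{a:\regionfunc{a}{d}\neq\emptyset\}$. That inference is incomplete: the supremum could still exceed $\maxa{d}$ if for some larger $a$ one had $g\ge p$ together with $g>s$, because then $\lambda=0$ satisfies both \eqref{eq:gc_over_isc} and \eqref{eq:gc_over_opc} and the slice is non-empty even though $3g\ge 2p+s$ was never imposed; ruling this out requires a check for $a>\maxa{d}$, not for $a\le\maxa{d}$. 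The fix is one line with the tools you already use: $D\cdot(g-s)$ and $D\cdot(g-p)$ are affine in $a$ with slopes $1/\sqrt{201}-1/\sqrt{2}<0$ and $1/\sqrt{201}>0$, and their zeros are affine functions of $d$ satisfying $a_{g=s}(d)<a_{g=p}(d)$ at $d=0$ and $d=1$ (roughly $11<32$ and $12<37$), hence on all of $[0,1]$; therefore $g\ge p$ forces $g<s$ and the dangerous case never occurs, so for every $a>\maxa{d}$ the slice is empty either because $3g<2p+s$ or because $g\le s$. With that addition your argument is complete and constitutes a self-contained proof of the lemma, which the paper itself does not supply.
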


\begin{proof}
We verify this by solving the following optimisation problem, where \maxa{d} is printed in Appendix \ref{sec:appendix_mathematica}:
\begin{align*}
    \underset{a, \lambda}{\text{argmax}}\{a \;\; | \;\; (a,d,\lambda) \in \region, \;\; a \geq 0, \;\; 0 \leq d \leq 1, \;\; 0 \leq \lambda \leq 1 \}
\end{align*}
\end{proof}

\begin{lemma}
Closed form symbolic solutions for the upper and lower bounds of $\lambda$ can be found. These bounds are continuous functions defined over $0 \leq d \leq 1$ and $0 \leq a \leq \maxa{d}$, and we refer to them as \lambdaub{a}{d} and \lambdalb{a}{d} respectively. \label{lem:interval_bounds}
\end{lemma}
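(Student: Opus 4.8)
The idea is simply to solve the two inequalities defining \region\ explicitly for $\lambda$, treating $(a,d)$ as parameters. Substituting the closed forms \eqref{eq:intsup_gc}--\eqref{eq:obj_paral_opc} into \eqref{eq:gc_over_isc} and \eqref{eq:gc_over_opc}, each inequality becomes affine in $\lambda$. Collecting the $\lambda$ terms, \eqref{eq:gc_over_isc} reads
\begin{align*}
    \lambda\Bigl(\tfrac{2}{3} - 1 - \obp{\goodcut}{\mipobjective} + \obp{\intsupcut{n}}{\mipobjective}\Bigr) \;\geq\; \obp{\intsupcut{n}}{\mipobjective} - \obp{\goodcut}{\mipobjective},
\end{align*}
and \eqref{eq:gc_over_opc} is the same with $\tfrac{2}{3}-\tfrac{1}{2}$ in place of $\tfrac{2}{3}-1$ and \objparalcut{n} in place of \intsupcut{n}. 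First I would pin down the sign of the coefficient of $\lambda$ in each inequality over the domain $\{0 \le d \le 1,\ 0 \le a \le \maxa{d}\}$. Using the explicit, bounded $\maxa{d}$ from Lemma \ref{lem:maxa}, this is a one-shot symbolic check. I expect the \intsupcut{n} coefficient to be strictly negative and the \objparalcut{n} coefficient strictly positive throughout, so that dividing reverses the first inequality (yielding an upper bound $\lambda \le \lambdaub{a}{d}$) and preserves the second (yielding a lower bound $\lambda \ge \lambdalb{a}{d}$), where \lambdaub{a}{d} and \lambdalb{a}{d} are the corresponding ratios of $\obp$ expressions — each a rational function of $a$, $d$, and the shared radical $\sqrt{1+a^2+(10+d)^2}$ together with the constant radicals $\sqrt{2},\sqrt{101},\sqrt{201}$.

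Next I would intersect with the box $\lambda\in[0,1]$ and argue it is inactive on the stated domain. Evaluating \eqref{eq:gc_over_isc} at $\lambda=1$ gives $\tfrac23\ge 1$, which is false, so $\lambdaub{a}{d}<1$ unconditionally (in fact the closed form simplifies to $\tfrac{B_g-B_i}{\,\tfrac13+B_g-B_i\,}<1$ whenever the $\lambda$-coefficient is negative). For the lower end, $\lambdalb{a}{d}\ge 0$ follows on $0\le a\le\maxa{d}$ because there the interval is nonempty ($\lambdalb{a}{d}\le\lambdaub{a}{d}$), so the box cannot clip the lower bound away. Hence $\regionfunc{a}{d}=[\lambdalb{a}{d},\lambdaub{a}{d}]$ exactly, with both endpoints given in closed form; these would be recorded in Appendix \ref{sec:appendix_mathematica}. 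If, contrary to expectation, a box bound turned out active somewhere, one simply replaces the endpoints by $\min\{1,\cdot\}$ and $\max\{0,\cdot\}$, which preserves continuity.

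Finally, continuity: each endpoint is a quotient whose numerator and denominator are continuous in $(a,d)$; the radicand $1+a^2+(10+d)^2\ge 1$ is bounded away from $0$, and the denominator of the solved-for $\lambda$ is precisely the $\lambda$-coefficient above, which the sign analysis shows is bounded away from $0$ on the domain — so the quotients are continuous, and a residual $\min/\max$ with a constant (if needed) preserves this. The step I expect to be the real obstacle is exactly that global sign analysis of the two $\lambda$-coefficients: a priori either could vanish or change sign as $(a,d)$ ranges over $\{0\le a\le\maxa{d}\}$, which would simultaneously destroy continuity and swap which inequality is the upper versus the lower bound. Establishing that neither happens is the crux, and I would do it by substituting the explicit $\maxa{d}$ and verifying the resulting one-variable inequality in $d\in[0,1]$ symbolically.
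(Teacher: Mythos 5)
Your proposal follows essentially the same route as the paper: rearrange the two inequalities \eqref{eq:gc_over_isc}--\eqref{eq:gc_over_opc}, which are affine in $\lambda$, solve for $\lambda$ to obtain \lambdaub{a}{d} and \lambdalb{a}{d} in closed form, and verify that the resulting denominators (equivalently, the $\lambda$-coefficients whose sign you analyse) have no zeros on $0 \leq d \leq 1$, $0 \leq a \leq \maxa{d}$, from which continuity follows. Your explicit sign analysis and the check that the box $\lambda \in [0,1]$ is inactive are just a more detailed spelling-out of the same symbolic verification the paper performs.
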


\begin{proof}
We know that the region respects inequalities \eqref{eq:gc_over_isc} and \eqref{eq:gc_over_opc} and that \maxa{d} is an upper bound on $a$ for all $d \in [0,1]$. Using this information, we can rearrange the inequalities to get \lambdaub{a}{d} and \lambdalb{a}{d}. The result for both \lambdaub{a}{d} and \lambdalb{a}{d} is a ratio of polynomials in terms of the parameters $a$ and $d$. As the zeros of the denominators lie outside of the domains $0 \leq d \leq 1$ and $0 \leq a \leq \maxa{d}$, we can conclude that both \lambdaub{a}{d} and \lambdalb{a}{d} are continuous and defined over our entire domain. These bounds define the interval, $[\lambdalb{a}{d}, \lambdaub{a}{d}]$, of $\lambda$ values for any fixed $a$ and $d$, which result in \goodcut being the largest scoring cut. Taken together with the bounds $0 \leq d \leq 1$ and $0 \leq a \leq \maxa{d}$ they make up \region.
\end{proof}

\begin{lemma} \label{lem:maxa_single_point_region}
The lower and upper bounds for $\lambda$ meet at $a = \maxa{d}$, $d \in [0,1]$. That is, $\lambdaub{\maxa{d}}{d} = \lambdalb{\maxa{d}}{d}$ for all $0 \leq d \leq 1$. This means that for all $d \in [0,1]$, $\lambda=\lambdaub{\maxa{d}}{d}$ (identically $\lambda=\lambdalb{\maxa{d}}{d}$) would score all cuts equally.
\end{lemma}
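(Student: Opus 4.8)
The plan is to exploit the one–dimensional, piecewise description of \region already assembled in Lemmas~\ref{lem:maxa} and~\ref{lem:interval_bounds}. For a fixed $d\in[0,1]$, the slice of \region is carved out by the two inequalities \eqref{eq:gc_over_isc} and \eqref{eq:gc_over_opc}, each of which is \emph{affine} in $\lambda$: the integer–support terms are constants and the objective–parallelism terms do not involve $\lambda$ at all. Dividing each by the (nonzero) coefficient of $\lambda$, one inequality becomes $\lambda\le\lambdaub{a}{d}$ and the other $\lambda\ge\lambdalb{a}{d}$, so that $\regionfunc{a}{d}=[\lambdalb{a}{d},\lambdaub{a}{d}]$ for $0\le a\le\maxa{d}$, as recorded in Lemma~\ref{lem:interval_bounds}. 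Since both bounds are rational in $(a,d)$ with denominators not vanishing on that domain, the first thing I would note is that the same closed forms remain well defined and continuous on a slightly larger strip $0\le a\le\maxa{d}+\eta$ for some $\eta>0$; the defining inequalities for points with $a>\maxa{d}$ are then still equivalent to $\lambdalb{a}{d}\le\lambda\le\lambdaub{a}{d}$ wherever the signs of the $\lambda$–coefficients are preserved, which they are for $\eta$ small by continuity.

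Next introduce the gap function $\delta(a,d):=\lambdaub{a}{d}-\lambdalb{a}{d}$, continuous on $0\le a\le\maxa{d}+\eta$ and nonnegative on $0\le a\le\maxa{d}$ (because the slice is nonempty there). The core of the argument is then: (i) by the defining property of \maxa{d} as the maximiser of $a$ over \region, for $a$ slightly above $\maxa{d}$ the slice is empty; and (ii) near $a=\maxa{d}$ the ambient box $[0,1]$ is slack, i.e. $0<\lambdalb{a}{d}$ and $\lambdaub{a}{d}<1$, so the emptiness in (i) can only be caused by $\delta(a,d)<0$, not by $[\lambdalb{a}{d},\lambdaub{a}{d}]\cap[0,1]=\varnothing$ with $\delta\ge0$. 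Point (ii) I would verify directly from the symbolic expressions printed in Appendix~\ref{sec:appendix_mathematica}. Given (i) and (ii), $\delta$ is continuous on $[\,\maxa{d}-\eta,\maxa{d}+\eta\,]$, nonnegative to the left of $\maxa{d}$ and strictly negative immediately to the right, so the intermediate value theorem forces $\delta(\maxa{d},d)=0$, i.e. $\lambdaub{\maxa{d}}{d}=\lambdalb{\maxa{d}}{d}$ for every $d\in[0,1]$. Writing $\lambda^\ast(d)$ for this common value, the fact that both \eqref{eq:gc_over_isc} and \eqref{eq:gc_over_opc} hold with equality at $(a,d,\lambda)=(\maxa{d},d,\lambda^\ast(d))$ is precisely the statement that the scores of \goodcut, \intsupcut{n} and \objparalcut{n} all coincide there.

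A second, and probably the route actually taken, is a direct symbolic verification matching the paper's Mathematica computations: substitute the closed form \maxa{d} from Lemma~\ref{lem:maxa} into the rational expressions $\lambdalb{\cdot}{d}$ and $\lambdaub{\cdot}{d}$ of Lemma~\ref{lem:interval_bounds}, clear denominators, and check that the numerator of $\lambdaub{\maxa{d}}{d}-\lambdalb{\maxa{d}}{d}$ is the zero polynomial in $d$ on $[0,1]$; evaluating $\lambdalb{\maxa{d}}{d}$ then exhibits $\lambda^\ast(d)$ explicitly. Either way, I expect the genuine difficulty to be exactly point (ii): one must rule out that the collapse of the interval at $a=\maxa{d}$ is an artefact of the box $[0,1]$ (for instance $\lambdaub{\maxa{d}}{d}=1$ with $\lambdalb{\maxa{d}}{d}<1$), and confirm that \maxa{d} is attained at a genuine crossing of the two nontrivial boundary curves rather than at $\lambda\in\{0,1\}$ or $a=0$. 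This is where the explicit symbolic forms are indispensable, so I would check it numerically over a fine grid of $d$ first and then symbolically.
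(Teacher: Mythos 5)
Your proposal is correct, and your ``second route'' is exactly what the paper does: the paper's proof consists of substituting the closed form of \maxa{d} from Lemma~\ref{lem:maxa} into the expressions for \lambdaub{a}{d} and \lambdalb{a}{d} from Lemma~\ref{lem:interval_bounds}, and verifying symbolically (Mathematica-aided) that the two coincide; the observation that at this common value both \eqref{eq:gc_over_isc} and \eqref{eq:gc_over_opc} are tight, so that \goodcut, \intsupcut{n} and \objparalcut{n} all receive the same score, completes the statement just as you describe. Your first, IVT-based route is a genuinely different framing -- using attainment of the maximum at $a=\maxa{d}$, emptiness of the slice just above it, and continuity of the gap $\lambdaub{a}{d}-\lambdalb{a}{d}$ on a slightly enlarged strip -- and it has the appeal of explaining \emph{why} the interval must collapse precisely at \maxa{d}. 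But, as you yourself flag, its load-bearing step (ii) (that the collapse is caused by the two bound curves crossing, not by the interval sliding out of the box $[0,1]$ or by a $\lambda$-coefficient changing sign, which would break tightness at the bound) can only be settled by inspecting the same symbolic expressions; so in practice it does not avoid the computation the paper performs, it just wraps it in a softer topological argument. Either route is acceptable, but the direct substitution is both what the paper does and the shorter path.
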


\begin{proof}
This can be checked by substituting $\maxa{d}$ into the equations of \lambdaub{\maxa{d}}{d} and \lambdalb{\maxa{d}}{d}, equating both sides and rearranging. The result is that $\lambdaub{\maxa{d}}{d} = \lambdalb{\maxa{d}}{d}$.
\end{proof}

\begin{figure}
    \centering
    \includegraphics[scale=0.7]{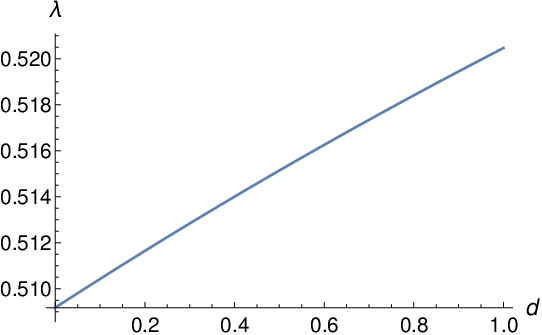}
    \caption{Plot of $\lambda$ values that scores all cuts equally for all $d \in [0,1]$. That is, $\lambdaub{\maxa{d}}{d}$, $d \in [0,1]$ (identically: $\lambdalb{\maxa{d}}{d}$)}
    \label{fig:labda_maxa_d}
\end{figure}

\begin{lemma}
$\lambdaub{\maxa{d}}{d}$ (identically: $\lambdalb{\maxa{d}}{d}$), where $0 \leq d \leq 1$, is a continuous function and has different valued end points. \label{lem:continuous}
\end{lemma}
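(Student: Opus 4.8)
The plan is to get continuity essentially for free from Lemmas~\ref{lem:maxa} and~\ref{lem:interval_bounds} by a composition argument, and to settle the statement about the endpoints by direct substitution into the closed forms already derived. First I would record that $\maxa{d}$ is continuous on $[0,1]$. One route is Berge's maximum theorem: in Lemma~\ref{lem:maxa} the objective $a$ is linear, the feasible set $\region$ is cut out by the two inequalities \eqref{eq:gc_over_isc}--\eqref{eq:gc_over_opc}, which depend continuously on $(a,d,\lambda)$, together with the box constraints $0\le d\le 1$, $0\le\lambda\le 1$, $a\ge 0$, so for each fixed $d$ the relevant slice is compact and the feasible correspondence $d\mapsto\{(a,\lambda):(a,d,\lambda)\in\region\}$ is continuous and compact-valued; hence the optimal value $\maxa{d}$ varies continuously in $d$. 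A shorter route is simply to inspect the closed-form expression for $\maxa{d}$ printed in Appendix~\ref{sec:appendix_mathematica} and verify that it is a single analytic expression whose denominator does not vanish on $[0,1]$.

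Next, combine this with Lemma~\ref{lem:interval_bounds}, which tells us that $\lambdaub{a}{d}$ is a ratio of polynomials in $(a,d)$ that is continuous (indeed the denominator has no zeros) on the domain $\{(a,d):0\le d\le 1,\ 0\le a\le\maxa{d}\}$. Since the graph $d\mapsto(\maxa{d},d)$ lands in exactly this domain, the map $d\mapsto\lambdaub{\maxa{d}}{d}$ is a composition of continuous functions and is therefore continuous on $[0,1]$. By Lemma~\ref{lem:maxa_single_point_region} we have $\lambdaub{\maxa{d}}{d}=\lambdalb{\maxa{d}}{d}$ for every $d\in[0,1]$, so the identical statement for $\lambdalb{\maxa{d}}{d}$ follows immediately, with no extra work.

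For the ``different valued end points'' part, I would substitute $d=0$ and $d=1$ into $\maxa{d}$ to obtain the two numbers $\maxa{0}$ and $\maxa{1}$, then substitute these into $\lambdaub{a}{d}$, simplify, and compare the resulting values; the inequality $\lambdaub{\maxa{0}}{0}\neq\lambdaub{\maxa{1}}{1}$ is then a finite symbolic check, which is carried out in the accompanying Mathematica notebook. Figure~\ref{fig:labda_maxa_d} already displays this curve and makes visible that it is strictly monotone, so in particular its two endpoints are distinct.

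The only part requiring genuine care is making the continuity of $\maxa{d}$ airtight rather than merely ``read off'' the printed formula: one must confirm that the argmax in Lemma~\ref{lem:maxa} is attained (compactness of the relevant slice of $\region$) and that there is no branch switching along $[0,1]$ that would introduce a jump discontinuity. This is precisely what the closed-form conclusion of Lemma~\ref{lem:maxa} provides --- a single expression valid across the whole interval --- so once that lemma is invoked, the remaining obstacles disappear and everything reduces to routine substitution and simplification.
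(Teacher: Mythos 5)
Your proposal is correct and follows essentially the same route as the paper: continuity is obtained by composing the continuity of \lambdaub{a}{d} from Lemma~\ref{lem:interval_bounds} with \maxa{d}, and the distinct endpoints are settled by evaluating \lambdaub{\maxa{0}}{0} and \lambdaub{\maxa{1}}{1} symbolically. The only difference is that you make explicit the continuity of \maxa{d} itself (via its closed form, or Berge's theorem), a step the paper leaves implicit.
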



\begin{proof}
We know from Lemma \ref{lem:interval_bounds} that \lambdaub{a}{d} is continuous, and can conclude that \lambdaub{\maxa{d}}{d} is continuous. The different valued endpoints can be derived by evaluating \lambdaub{\maxa{0}}{0} and \lambdaub{\maxa{1}}{1}, which have the relation \lambdaub{\maxa{1}}{1} $>$ \lambdaub{\maxa{0}}{0}.
\end{proof}

Figure \ref{fig:labda_maxa_d} visualises the function \lambdaub{\maxa{d}}{d} (identically \lambdalb{\maxa{d}}{d}) for $0 \leq d \leq 1$. For any $d \in [0,1]$, these functions alongside slight changes to \maxa{d}, will be used to generate intervals of $\lambda$ values, which score \goodcut the largest and lie between a finite discretisation of $[0,1]$.

\begin{lemma}
\lambdaub{a}{d} - \lambdalb{a}{d} $>$ 0 for all $0 \leq d \leq 1$ and $0 \leq a < \maxa{d}$. That is, $a=\maxa{d}$ is the only time at which \lambdaub{a}{d} = \lambdalb{a}{d} for $0 \leq a \leq \maxa{d}$. \label{lem:unique_maxa}
\end{lemma}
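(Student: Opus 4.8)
The plan is to work from the closed forms of $\lambdalb{a}{d}$ and $\lambdaub{a}{d}$ established in Lemma~\ref{lem:interval_bounds}, and to show that their difference factors so that its numerator is, up to a strictly positive factor, exactly the affine-in-$a$ function whose unique zero defines $\maxa{d}$. Set $s := \|\mipobjective\| = \sqrt{1 + a^2 + (10+d)^2} > 0$ and
\[
  P := s\bigl(\obp{\intsupcut{n}}{\mipobjective} - \obp{\goodcut}{\mipobjective}\bigr),\qquad Q := s\bigl(\obp{\objparalcut{n}}{\mipobjective} - \obp{\goodcut}{\mipobjective}\bigr),
\]
which by \eqref{eq:obj_paral_gc}--\eqref{eq:obj_paral_opc} are both \emph{affine} in $(a,d)$. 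Rearranging \eqref{eq:gc_over_isc} and \eqref{eq:gc_over_opc} exactly as in the proof of Lemma~\ref{lem:interval_bounds}, and using $\isp{\goodcut}=\frac{2}{3}$, $\isp{\intsupcut{n}}=1$, $\isp{\objparalcut{n}}=\frac{1}{2}$, gives the compact expressions $\lambdaub{a}{d}=\frac{-3P}{s-3P}$ and $\lambdalb{a}{d}=\frac{6Q}{s+6Q}$.

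Next I would form the difference over a common denominator; the $PQ$ cross-terms cancel, leaving
\[
  \lambdaub{a}{d} - \lambdalb{a}{d} \;=\; \frac{-3s\,(P + 2Q)}{(s - 3P)(s + 6Q)}.
\]
On the domain $0\le d\le 1$, $0\le a\le\maxa{d}$ a crude bound on the affine functions $P,Q$ (equivalently: evaluation at the corners of the region) gives $P<0$ and $Q>0$, hence $s-3P>0$ and $s+6Q>0$; combined with $-3s<0$ this shows the sign of $\lambdaub{a}{d}-\lambdalb{a}{d}$ equals the sign of $-(P+2Q)$. Now $P+2Q$ is affine in $a$ with strictly positive coefficient of $a$, namely $\frac{1}{\sqrt{2}}-\frac{3}{\sqrt{201}}>0$, so for each fixed $d\in[0,1]$ it has exactly one zero in $a$; by Lemma~\ref{lem:maxa_single_point_region} the difference $\lambdaub{a}{d}-\lambdalb{a}{d}$ vanishes at $a=\maxa{d}$, and since the denominator is nonzero there (Lemma~\ref{lem:interval_bounds}) this forces $P+2Q=0$ at $a=\maxa{d}$ and identifies $\maxa{d}$ as that unique zero -- in particular $\maxa{d}$ is affine in $d$. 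Hence $P+2Q<0$ for $0\le a<\maxa{d}$, so $\lambdaub{a}{d}-\lambdalb{a}{d}>0$ there, while $P+2Q=0$ exactly at $a=\maxa{d}$, so equality holds only there. This is the claim.

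The only real computation is the algebraic simplification producing the displayed identity and confirming the signs of the affine functions $P$, $Q$, and $P+2Q$ over the region $\{0\le d\le 1,\ 0\le a\le\maxa{d}\}$; since $\maxa{d}$ is affine this region is a trapezoid, so each sign check reduces to evaluation at its four corners, which is routine and can be verified in Mathematica alongside the surrounding lemmas. I do not anticipate a genuine obstacle -- the one thing to get right is the sign bookkeeping in the rearrangement of \eqref{eq:gc_over_isc}--\eqref{eq:gc_over_opc}, which is what makes the cross-term cancellation, and hence the reduction of the whole statement to the fact that an affine function has a single root, visible. If one prefers to avoid the explicit factorisation, an equivalent route is to differentiate: show that $a\mapsto\lambdaub{a}{d}$ is strictly decreasing and $a\mapsto\lambdalb{a}{d}$ strictly increasing on $[0,\maxa{d}]$ from the signs of their $a$-derivatives, so the difference is strictly decreasing and therefore strictly positive before its zero at $a=\maxa{d}$.
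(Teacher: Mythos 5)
Your proof is correct, but it takes a genuinely different route from the paper's. The paper's own proof is computational: it sets \eqref{eq:gc_over_isc}--\eqref{eq:gc_over_opc} to hard equalities, solves the resulting system symbolically (in Mathematica) over $0\le d\le 1$, $0\le a\le \maxa{d}$ to find $a=\maxa{d}$ as the unique solution, and then combines this with $\lambdaub{0}{d} > \lambdalb{0}{d}$ and the continuity of the bounds (Lemma \ref{lem:interval_bounds}) to conclude strict positivity of the difference for $a<\maxa{d}$. You instead exploit the structure of \eqref{eq:simple_cut_rule}: writing $P = s(\obp{\intsupcut{n}}{\mipobjective}-\obp{\goodcut}{\mipobjective})$ and $Q = s(\obp{\objparalcut{n}}{\mipobjective}-\obp{\goodcut}{\mipobjective})$, which are affine by \eqref{eq:obj_paral_gc}--\eqref{eq:obj_paral_opc}, you obtain $\lambdaub{a}{d}=\tfrac{-3P}{s-3P}$, $\lambdalb{a}{d}=\tfrac{6Q}{s+6Q}$, and the cross-term cancellation giving $\lambdaub{a}{d}-\lambdalb{a}{d}=\tfrac{-3s(P+2Q)}{(s-3P)(s+6Q)}$, so the lemma reduces to the sign of the single affine function $P+2Q$, whose leading coefficient in $a$ is $\tfrac{1}{\sqrt{2}}-\tfrac{3}{\sqrt{201}}>0$ and whose unique root is $\maxa{d}$ by Lemma \ref{lem:maxa_single_point_region} (I checked the algebra and the numerics against the closed forms in Appendix \ref{sec:appendix_mathematica}; they agree). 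Your route is CAS-free, hand-checkable, and yields extra structure for free (strict monotonicity of the difference in $a$, and affineness of $\maxa{d}$ in $d$), whereas the paper's buys brevity and fits its general policy of delegating the symbolic work to the accompanying notebook. Two small points of bookkeeping you should make explicit: the compact expressions for the bounds already require $s-3P>0$ and $s+6Q>0$ (otherwise the rearrangements of \eqref{eq:gc_over_isc}--\eqref{eq:gc_over_opc} flip direction), so those sign checks belong before, not after, the displayed identity; and your corner-evaluation argument quietly uses that the domain is a trapezoid, i.e.\ that $\maxa{d}$ is affine, which you only deduce afterwards --- either cite the (visibly affine) formula for $\maxa{d}$ printed in Appendix \ref{sec:appendix_mathematica}, or run the sign checks of $P$ and $Q$ on a crude enclosing box such as $0\le a\le 6$, $0\le d\le 1$. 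Neither issue is a genuine gap.
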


\begin{proof}
We can verify this by setting the constraints \eqref{eq:gc_over_isc}-\eqref{eq:gc_over_opc} to hard equalities, and then solving over the domain $0\leq d \leq 1$ and $0\leq a \leq \maxa{d}$. Solving such a system gives the unique solution $a=\maxa{d}$ for $0 \leq d \leq 1$. As $\lambdaub{0}{d} > \lambdalb{0}{d}$, for all $d \in [0,1]$, and both $\lambdaub{a}{d}$ and $\lambdalb{a}{d}$ are continuous functions from Lemma \ref{lem:interval_bounds}, we can conclude that \lambdaub{a}{d} - \lambdalb{a}{d} $>$ 0 for all $0 \leq d \leq 1$ and $0 \leq a < \maxa{d}$.
\end{proof}

\begin{lemma} \label{lem:interval_construction}
An interval $[\lambdalb{a'}{d}, \lambdaub{a'}{d}] \subseteq \regionfunc{a'}{d}$ can be constructed, where $\lambdaub{a'}{d} - \lambdalb{a'}{d} > 0$ for all $0 \leq d \leq 1$ and $0 \leq a' < \maxa{d}$.
\end{lemma}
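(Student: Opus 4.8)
The plan is to assemble the claim directly from the structural facts already proven about \region and its $\lambda$-slices, without redoing any symbolic work. First I would recall from Lemma \ref{lem:interval_bounds} that for every fixed pair $(a,d)$ with $0 \leq d \leq 1$ and $0 \leq a \leq \maxa{d}$, the set of $\lambda$ for which \goodcut is the largest scoring cut is exactly the closed interval $[\lambdalb{a}{d}, \lambdaub{a}{d}]$. By the definition of $\regionfuncnoargs$ as the map sending $(a,d)$ to the $\lambda$-fibre of \region, and since \region is precisely the set of triples that score \goodcut the largest (equivalently, that satisfy \eqref{eq:gc_over_isc}--\eqref{eq:gc_over_opc} by Lemma \ref{lem:goodcut_lambdas}), this says $\regionfunc{a}{d} = [\lambdalb{a}{d}, \lambdaub{a}{d}]$ on that domain. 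So for any admissible $a'$ the containment $[\lambdalb{a'}{d}, \lambdaub{a'}{d}] \subseteq \regionfunc{a'}{d}$ holds, in fact with equality, and the only remaining task is to guarantee the interval is nondegenerate.

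Next I would fix $d \in [0,1]$ and take any $a'$ with $0 \leq a' < \maxa{d}$. Such an $a'$ exists because $\maxa{d} > 0$ for every $d$ in the range: this follows from Lemma \ref{lem:maxa} together with the fact, used inside the proof of Lemma \ref{lem:unique_maxa}, that $\lambdaub{0}{d} > \lambdalb{0}{d}$, which by Lemma \ref{lem:maxa_single_point_region} rules out $\maxa{d} = 0$ (at $a = \maxa{d}$ the two bounds coincide, which would contradict strict inequality at $a = 0$). Then Lemma \ref{lem:unique_maxa} applies verbatim and yields $\lambdaub{a'}{d} - \lambdalb{a'}{d} > 0$ for all $d \in [0,1]$ and all $a'$ strictly below $\maxa{d}$. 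Combining the two observations, $[\lambdalb{a'}{d}, \lambdaub{a'}{d}]$ is a subinterval of $\regionfunc{a'}{d}$ of strictly positive length, which is the assertion.

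There is no real obstacle here; the lemma is a packaging step that names the object — a positive-length interval of ``good'' $\lambda$ values, parametrised continuously by $a'$ and $d$ — that the construction in the next lemma will perturb so as to land inside a prescribed gap of \lambdadisgaps. The only points worth spelling out are the two boundary cases: $a' = 0$ is permitted and is already covered by Lemma \ref{lem:unique_maxa}, so the interval is nondegenerate even there, whereas $a' = \maxa{d}$ must be excluded, since by Lemma \ref{lem:maxa_single_point_region} the interval collapses to the single point $\lambdaub{\maxa{d}}{d} = \lambdalb{\maxa{d}}{d}$ at the top of the $a$-range.
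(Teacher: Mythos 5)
Your proof is correct and takes essentially the same route as the paper's: both reduce the lemma to combining Lemma \ref{lem:interval_bounds} (the $\lambda$-fibre of \region at fixed $(a,d)$ is the interval $[\lambdalb{a}{d}, \lambdaub{a}{d}]$) with Lemma \ref{lem:unique_maxa} (strict positivity of $\lambdaub{a}{d} - \lambdalb{a}{d}$ for $a < \maxa{d}$). The only cosmetic difference is that the paper parametrises the admissible $a'$ as $\maxaprime{d}{\epsinterval} = \maxa{d} - \epsinterval$ to set up the perturbation used in the proof of Theorem \ref{thm:main}, whereas you work directly with $a' < \maxa{d}$ and add the (harmless, slightly more careful) remark that $\maxa{d} > 0$ so such $a'$ exist.
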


\begin{proof}
We define following function, \maxaprime{d}{\epsinterval}, representing \maxa{d} with a shift of \epsinterval:
\begin{align}
    \maxaprime{d}{\epsinterval} := \maxa{d} - \epsinterval, \, \text{where} \quad 0 \leq d \leq 1, \quad 0 < \hat{\epsilon} \leq \maxa{d}
\end{align}
We know from Lemma \ref{lem:unique_maxa} that $a=\maxa{d}$ is the only time at which $\lambdalb{a}{d} = \lambdaub{a}{d}$ for any $d \in [0,1]$. We also know that \lambdaub{a}{d} and \lambdalb{a}{d} are defined over all $0 \leq d \leq 1$ and $0 \leq a \leq \maxa{d}$. Therefore the following holds for any $d \in [0,1]$ and $\epsinterval \in (0, \maxa{d}]$:
\begin{align*}
    \lambdaub{\maxaprime{d}{\epsinterval}}{d} - \lambdalb{\maxaprime{d}{\epsinterval}}{d} > 0
\end{align*}
Additionally, by the definition of \region from the inequalities \eqref{eq:gc_over_isc} - \eqref{eq:gc_over_opc}, we know that the following interval is connected:
\begin{align*}
    \interval{\maxaprime{d}{\epsinterval}}{d} := [\lambdalb{\maxaprime{d}{\epsinterval}}{d}, \lambdaub{\maxaprime{d}{\epsinterval}}{d}], \, \text{where} \quad 0 \leq d \leq 1, \quad 0 < \hat{\epsilon} \leq \maxa{d}
\end{align*}
We therefore can construct a connected non-empty interval \interval{a'}{d} $\subseteq \regionfunc{a'}{d}$ for all $d \in [0,1]$, where $a' = \maxaprime{d}{\epsinterval}$ and $0 \leq \epsinterval < \maxa{d}$. 
\end{proof}

While we have shown the necessary methods to construct an interval of $\lambda$ values, \interval{a}{d}, that result in \goodcut being selected, we have yet to guarantee that at all stages of the solving process, the desired LP optimal solution is taken for all $0 \leq d \leq 1$ and $0 \leq a \leq \maxa{d}$. Specifically, we need to show that the originally optimal point is always $(\frac{-1}{2}, 3, \frac{1}{2})$, that after applying \goodcut the integer solution $(1,1,0)$ is optimal, and that after applying \intsupcut{n} (or \objparalcut{n}) a fractional solution from \intsupcutpoints{n} (or \objparalcutpoints{n}) for all $n \in \naturals$, is optimal. 

\begin{lemma} \label{lem:lp_opt}
The fractional solution $(\frac{-1}{2}, 3, \frac{1}{2})$ is LP optimal for \mip for all $0 \leq d \leq 1$ and $0 \leq a \leq \maxa{d}$.
\end{lemma}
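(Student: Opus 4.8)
The plan is to use the structure already established: the LP relaxation of $\mip$ has feasible region $\mathrm{conv}(\mathcal{X})$, which was shown above to be a bounded $3$-simplex with vertex set exactly $\mathcal{X} = \{(0,0,0),(1,0,0),(1,1,0),(-\tfrac12,3,\tfrac12)\}$. Since the feasible region is a bounded polytope, the minimum of the linear objective is attained at one of its four vertices, so it suffices to evaluate the objective $f(x_1,x_2,x_3) := x_1 - (10+d)x_2 - ax_3$ at each point of $\mathcal{X}$ and verify that $(-\tfrac12,3,\tfrac12)$ is a minimiser for every $(a,d)$ with $0 \le d \le 1$ and $0 \le a \le \maxa{d}$.

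Concretely, I would compute $f(0,0,0)=0$, $f(1,0,0)=1$, $f(1,1,0)=-9-d$, and $f(-\tfrac12,3,\tfrac12)=-\tfrac{61}{2}-3d-\tfrac{a}{2}$, and then check the three inequalities $f(-\tfrac12,3,\tfrac12)\le f(0,0,0)$, $f(-\tfrac12,3,\tfrac12)\le f(1,0,0)$, and $f(-\tfrac12,3,\tfrac12)\le f(1,1,0)$. These reduce respectively to $-\tfrac{61}{2}-3d-\tfrac{a}{2}\le 0$, $-\tfrac{63}{2}-3d-\tfrac{a}{2}\le 0$, and $-\tfrac{43}{2}-2d-\tfrac{a}{2}\le 0$, each of which is immediate since $a,d\ge 0$; in particular all three hold on the domain $0\le d\le 1$, $0\le a\le \maxa{d}$, establishing that $(-\tfrac12,3,\tfrac12)$ is LP optimal.

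There is essentially no hard step here: the computation is routine, and the only input from the earlier development is the reduction to checking vertices, which is licensed precisely by the fact (stated above and visualised in Figure \ref{fig:feasible_region}) that $\mathrm{conv}(\mathcal{X})$ is a $3$-simplex with exactly those four vertices, so that the LP relaxation is bounded and its optimum is attained at a vertex. I would also note that the hypothesis $a\le\maxa{d}$ is not needed for this particular lemma — the conclusion holds for every $a\in\positivereals$ — but it is retained here because it is the parameter range relevant to the remainder of the proof of Theorem \ref{thm:main}.
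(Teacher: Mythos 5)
Your proposal is correct and follows essentially the same route as the paper: reduce to the vertices of the simplex $\mathrm{conv}(\mathcal{X})$ and compare objective values there, with the arithmetic checking out ($0$, $1$, $-9-d$, and $-\tfrac{61}{2}-3d-\tfrac{a}{2}$). Note that your three inequalities in fact hold strictly (the constant terms $-\tfrac{61}{2}$, $-\tfrac{63}{2}$, $-\tfrac{43}{2}$ are negative), which matches the paper's slightly stronger claim that the objective at $(-\tfrac12,3,\tfrac12)$ is strictly less than at the other vertices, and your observation that $a\le\maxa{d}$ is not needed here is also accurate.
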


\begin{proof}
This can be done done by substituting all points from $\mathcal{X} \setminus (\frac{-1}{2}, 3, \frac{1}{2})$ into the objective, and then showing that the objective is strictly less when evaluated at $(\frac{-1}{2}, 3, \frac{1}{2})$. This shows that for all $0 \leq d \leq 1$ and $0 \leq a \leq \maxa{d}$ :
\begin{align*}
    \mip \big\rvert_{\mathbf{x}=(\frac{-1}{2}, 3, \frac{1}{2})} < \mip \big\rvert_{\mathbf{x}=\mathbf{x}'} \quad \forall \mathbf{x}' \in \mathcal{X} \setminus \{(\frac{-1}{2}, 3, \frac{1}{2})\}
\end{align*}
\end{proof}

\begin{lemma} \label{lem:int_opt}
The integer solution $(1,1,0)$ is LP optimal after applying \goodcut to \mip for all $0 \leq d \leq 1$ and $0 \leq a \leq \maxa{d}$.
\end{lemma}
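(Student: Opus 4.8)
The plan is to mirror the proof of Lemma \ref{lem:lp_opt}. By Lemma \ref{lem:vertex_set}, after adding \goodcut the LP relaxation of \mip has vertex set $\goodcutpoints = \integerpoints \cup \{(\frac{61}{91},\frac{60}{91},\frac{10}{91})\}$, so the LP optimum is attained at one of the four points $(0,0,0)$, $(1,0,0)$, $(1,1,0)$, $(\frac{61}{91},\frac{60}{91},\frac{10}{91})$. It therefore suffices to evaluate the objective $x_1 - (10+d)x_2 - ax_3$ at each vertex and show that $(1,1,0)$ attains the strict minimum for every $0\le d\le 1$ and $0\le a\le\maxa{d}$ (strictness, rather than just $\le$, is what guarantees the LP solver returns the integer point regardless of how ties among vertices are broken, which is what the pure cutting plane simulation needs).

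First I would record the four objective values: $0$ at $(0,0,0)$, $1$ at $(1,0,0)$, $-9-d$ at $(1,1,0)$, and $\frac{-539-60d-10a}{91}$ at the fractional vertex. The comparisons with the two integer vertices are immediate, since $-9-d<0<1$ for all $d\in[0,1]$. The only substantive comparison is $-9-d < \frac{-539-60d-10a}{91}$; clearing the (positive) denominator and rearranging reduces this to $10a < 280 + 31d$, i.e. $a < 28 + \tfrac{31}{10}d$.

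Thus the lemma collapses to verifying the single inequality $\maxa{d} < 28 + \tfrac{31}{10}d$ for all $d\in[0,1]$. Here I would invoke the closed-form expression for $\maxa{d}$ from Lemma \ref{lem:maxa}, reproduced in Appendix \ref{sec:appendix_mathematica}, substitute it in, and check that $28 + \tfrac{31}{10}d - \maxa{d} > 0$ on $[0,1]$, e.g. by confirming this rational function has no zero in $[0,1]$ and is positive at $d=0$ --- the same style of sign check on ratios of polynomials already used repeatedly in the preceding lemmas, and which we verify with Mathematica \cite{Mathematica}.

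The main obstacle is exactly that last step: carrying out the sign verification with the explicit and somewhat unwieldy formula for $\maxa{d}$. Everything else --- the reduction via the vertex set and the arithmetic at the four vertices --- is elementary.
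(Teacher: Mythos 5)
Your proposal is correct and follows essentially the same route as the paper: the paper's proof likewise reduces the claim to checking that the objective at $(1,1,0)$ is strictly smaller than at every other point of $\goodcutpoints$, exactly the vertex-by-vertex comparison you carry out. Your extra step of making the binding comparison explicit as $a < 28 + \tfrac{31}{10}d$ and checking it against $\maxa{d}$ (which numerically stays near $5$ on $[0,1]$, so the inequality holds with large margin) is just a more detailed rendering of the same argument.
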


\begin{proof}
This can be done in an identical fashion to Lemma \ref{lem:lp_opt}. That is, we show that for all $0 \leq d \leq 1$ and $0 \leq a \leq \maxa{d}$ :
\begin{align*}
    \mip \big\rvert_{\mathbf{x}=(1,1,0)} < \mip \big\rvert_{\mathbf{x}=\mathbf{x}'} \quad \forall \mathbf{x}' \in \goodcutpoints \setminus \{(1,1,0)\}
\end{align*}
\end{proof}

\begin{lemma} \label{lem:intsup_lp_optimal}
Having applied the cut \intsupcut{n} to \mip, a point from \intsupcutpoints{n} $\setminus \integerpoints$ is LP optimal for all $0 \leq d \leq 1$ and $0 \leq a \leq \maxa{d}$.
\end{lemma}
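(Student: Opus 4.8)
The plan is to follow the template already used for Lemmas~\ref{lem:lp_opt} and~\ref{lem:int_opt}. Adjoining \intsupcut{n} to \mip produces a bounded polytope whose vertex set is exactly \intsupcutpoints{n} by Lemma~\ref{lem:vertex_set}, and the objective $x_1 - (10+d)x_2 - ax_3$ is linear, so it suffices to evaluate the objective at the six vertices and show that the strict minimum is attained at one of the three fractional vertices in $\intsupcutpoints{n} \setminus \integerpoints$, uniformly over $0 \le d \le 1$, $0 \le a \le \maxa{d}$, and $0 < \epsilon_n < 0.1$. In fact I expect to be able to name the optimal vertex explicitly, which is convenient since the next round of cut selection will need the new LP solution.

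First I would rank the three fractional vertices against one another. The first and second of them (in the order listed in Lemma~\ref{lem:vertex_set}) share their $x_1$- and $x_3$-coordinates and differ only in $x_2$, with the second having the larger value (since $3 - \epsilon_n > 3 - \tfrac{3\epsilon_n}{2}$); as the coefficient $-(10+d)$ is negative for every $d \in [0,1]$, the second fractional vertex has the strictly smaller objective. Subtracting the objective at the third fractional vertex from that at the second leaves $\epsilon_n\bigl(\tfrac{1}{4} - 2(10+d) - \tfrac{a}{4}\bigr)$, which is strictly negative because $2(10+d) \ge 20$ and $a \ge 0$. Hence the middle fractional vertex $\bigl(-\tfrac{1}{2} + \tfrac{3\epsilon_n}{4},\, 3 - \epsilon_n,\, \tfrac{1}{2} - \tfrac{\epsilon_n}{4}\bigr)$ dominates the other two.

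Next I would compare that vertex with the three points of \integerpoints. Its objective value works out to $-\tfrac{61}{2} - 3d - \tfrac{a}{2} + \epsilon_n\bigl(\tfrac{43}{4} + d + \tfrac{a}{4}\bigr)$; bounding the $\epsilon_n$-term using $\epsilon_n < 0.1$ together with $d \le 1$ shows it remains strictly below $-9 - d$, which is the objective at $(1,1,0)$ and is itself smaller than the values $0$ and $1$ attained at $(0,0,0)$ and $(1,0,0)$. Chaining the two comparisons shows the middle fractional vertex is the unique LP optimum over the entire parameter domain, which is what the lemma asserts.

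Every inequality here is elementary, so I expect the only mild obstacle to be bookkeeping: carrying the parametrisation in $(a,d,\epsilon_n)$ consistently through all the vertex comparisons and confirming that the gap left by $\epsilon_n < 0.1$ is wide enough that no comparison degenerates at the boundary of the domain — which it comfortably is, since the fractional optimum sits roughly $20$ below $-9 - d$. As with the earlier lemmas, each step can be checked symbolically in the accompanying Mathematica notebook, and one notices along the way that the conclusion actually holds for every $a \ge 0$, not only for $a \le \maxa{d}$.
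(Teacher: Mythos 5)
Your proposal is correct and follows essentially the same route as the paper: using the vertex set from Lemma~\ref{lem:vertex_set}, evaluate the linear objective at the vertices and show a fractional vertex beats every point of \integerpoints, which the paper does symbolically (via its Mathematica notebook) while you carry out the comparisons by hand. The extra content you provide — identifying $\bigl(-\tfrac{1}{2}+\tfrac{3\epsilon_n}{4},\,3-\epsilon_n,\,\tfrac{1}{2}-\tfrac{\epsilon_n}{4}\bigr)$ as the unique optimum and observing the bound holds for all $a \geq 0$ — is accurate but not a different method.
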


\begin{proof}
This can be done by showing that for any choice of $a \in [0,\maxa{d}]$ and $d \in [0,1]$,
there is at least one point from $\intsupcutpoints{n} \setminus \integerpoints$ at which the objective is strictly less than at all integer points \integerpoints. Specifically, for all $0 \leq d \leq 1$ and $0 \leq a \leq \maxa{d}$:
\begin{align*}
    \exists \mathbf{x}' \in \intsupcutpoints{n} \setminus \integerpoints \quad s.t \quad \mip \big\rvert_{\mathbf{x}=\mathbf{x}'} < \mip \big\rvert_{\mathbf{x}=\mathbf{x}''} \quad \forall \mathbf{x}'' \in \integerpoints
\end{align*}
\end{proof}

\begin{lemma} \label{lem:objparal_lp_optimal}
Having applied the cut \objparalcut{n} to \mip, a point from \objparalcutpoints{n} $\setminus \integerpoints$ is LP optimal for all $0 \leq d \leq 1$ and $0 \leq a \leq \maxa{d}$.
\end{lemma}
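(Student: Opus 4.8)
The plan is to mirror the argument used for Lemma \ref{lem:intsup_lp_optimal}, since \objparalcut{n} plays the symmetric role to \intsupcut{n}. By Lemma \ref{lem:vertex_set}, once \objparalcut{n} is added the LP relaxation is the convex hull of $\objparalcutpoints{n} = \integerpoints \cup \{v_1,v_2,v_3\}$, where $v_1,v_2,v_3$ are the three fractional vertices listed there. Since the LP optimum of a bounded polytope is attained at a vertex, it suffices to show that $\min_k \mipobjective^{\intercal} v_k$ is strictly smaller than $\min_{\mathbf{x}\in\integerpoints}\mipobjective^{\intercal}\mathbf{x}$ for every $d\in[0,1]$ and $a\in[0,\maxa{d}]$; then no integer vertex can be LP optimal and the statement follows.

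First I would evaluate the objective $x_1-(10+d)x_2-ax_3$ at the three points of \integerpoints: these give $0$, $1$, and $-9-d$, so the integer minimum equals $-9-d$ and, crucially, is independent of $a$ because the third coordinate of every point in \integerpoints is $0$. Next I would evaluate the objective at one convenient fractional vertex, say $v_1=(-\tfrac12+\tfrac{\epsilon_n}{21},\,3-\tfrac{2\epsilon_n}{21},\,\tfrac12-\tfrac{\epsilon_n}{63})$, obtaining an expression of the form $-\tfrac{61}{2}-3d-\tfrac{a}{2}+\tfrac{\epsilon_n(21+2d)}{21}$, whose $\epsilon_n$-coefficient is bounded uniformly in $d$. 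Because the $-ax_3$ term at $v_1$ is non-positive and strictly decreasing in $a$, while the integer minimum is constant in $a$, the worst case is $a=0$; there the required strict inequality reduces to $\tfrac{\epsilon_n(21+2d)}{21}<\tfrac{43}{2}+2d$, which holds for all $d\in[0,1]$ since $\epsilon_n<0.1$. This yields the strict gap for every admissible $(a,d,\epsilon_n)$, and since the bound $\epsilon_n<0.1$ is uniform in $n$, we conclude that for every such $(a,d)$ some point of $\objparalcutpoints{n}\setminus\integerpoints$ is LP optimal, for all $n\in\naturals$.

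I do not expect a genuine obstacle here. Unlike the lemmas that invoke \maxa{d}, this statement does not actually require the precise closed form of \maxa{d}: monotonicity of the objective in $a$ along $v_1$ moves the worst case to $a=0$, so only the elementary inequality above must be checked, and it holds with a wide margin. The only mild care needed is (i) confirming that $v_1$ (rather than one of $v_2,v_3$) is a genuine vertex over the whole parameter range, which is already guaranteed by Lemma \ref{lem:vertex_set}, and (ii) keeping the $\epsilon_n$-dependence explicit so the bound is uniform in $n$; both are handled symbolically in the accompanying Mathematica notebook exactly as for Lemma \ref{lem:intsup_lp_optimal}.
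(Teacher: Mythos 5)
Your proposal is correct and takes essentially the same route as the paper, which simply mirrors Lemma \ref{lem:intsup_lp_optimal}: exhibit one fractional vertex of \objparalcutpoints{n} whose objective value is strictly below that of every point of \integerpoints for all $0 \leq d \leq 1$ and $0 \leq a \leq \maxa{d}$ (the paper verifies this symbolically, you make the evaluation explicit). One tiny slip: the value at $v_1$ is $-\tfrac{61}{2}-3d-a\bigl(\tfrac{1}{2}-\tfrac{\epsilon_n}{63}\bigr)+\tfrac{\epsilon_n(21+2d)}{21}$, i.e.\ your displayed expression drops the $+\tfrac{a\epsilon_n}{63}$ term, but since $\epsilon_n<0.1$ the $a$-coefficient stays negative, so your monotonicity-in-$a$ reduction to $a=0$ and the final inequality are unaffected.
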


\begin{proof}
This proof follows the same logic as that of Lemma \ref{lem:intsup_lp_optimal}.
\end{proof}

We can now prove Theorem \ref{thm:main} using the Lemmas \ref{lem:int_feas_points} - \ref{lem:objparal_lp_optimal} that we have built up throughout this paper.

\maintheorem*

\begin{proof}
From Lemmas \ref{lem:int_feas_points} - \ref{lem:dom_int_cut}, we know the exact vertex set of our feasible region at each stage of the solving process, as well as the exact set of cuts at each round. Furthermore, as at each round only the RHS value for each proposed cut changes, the scoring of the cuts at each new round remains constant, and we can therefore completely describe the three scenarios of how cuts would be added. Let \cutsadded be the set containing all cuts added during the solution process to an instance \mip, where $0 \leq d \leq 1$ and $0 \leq a \leq \maxa{d}$:

\begin{equation}
  \cutsadded := \left \{
  \begin{aligned}
    &\{\intsupcut{n} : \forall n \in \mathbb{N} \}, && \text{if}\ \lambda > \lambdaub{a}{d} \\
    &\{ \goodcut \}, && \text{if}\ \lambdalb{a}{d} \leq \lambda \leq \lambdaub{a}{d}\\
    &\{ \objparalcut{n} : \forall n \in \mathbb{N} \} , && \text{if}\ \lambda < \lambdalb{a}{d}
  \end{aligned} \right.
\end{equation}

From Lemma \ref{lem:goodcut_lambdas} we know the sufficient conditions for a $\lambda$ value that results in \goodcut being scored at least as well as the other cuts. Lemmas \ref{lem:maxa} - \ref{lem:unique_maxa} show how these sufficient conditions can be used to construct the region \region. Moreover, they show that \region is bounded, and that $a=\maxa{d}$, for all $d \in [0,1]$, is the only time at which the following occurs:
\begin{align*}
    \lambdalb{a}{d} = \lambdaub{a}{d} \quad \forall d \in [0,1]
\end{align*}
We therefore conclude that \region is connected. We know from Lemma \ref{lem:continuous} that both \lambdaub{\maxa{d}}{d} and \lambdalb{\maxa{d}}{d} are continuous, where $d \in [0,1]$, and that \lambdaub{\maxa{1}}{1} $>$ \lambdaub{\maxa{0}}{0}. From the intermediate value theorem, we then know the following:
\begin{align}
    \forall \lambda \in [\lambdaub{\maxa{0}}{0},\, \lambdaub{\maxa{1}}{1}], \quad \exists d'\ s.t \ \lambda = \lambdaub{\maxa{d'}}{d'} \label{eq:dprime_exists}
\end{align}
From Lemma \ref{lem:interval_construction} we have shown an explicit way to construct an interval $\interval{a}{d} \subseteq \regionfunc{a}{d}$ for all $(a,d) \in [0,\maxa{d}) \times [0,1]$. We can therefore construct the following intervals:
\begin{align}
    \interval{\maxaprime{d'}{\epsinterval}}{d'} = [\lambdalb{\maxaprime{d'}{\epsinterval}}{d'}, \lambdaub{\maxaprime{d'}{\epsinterval}}{d'}], \, \text{where} \quad 0 \leq d' \leq 1, \quad 0 < \hat{\epsilon} \leq \maxa{d} \label{eq:dprime_interval}
\end{align}
These intervals can be arbitrarily small as \epsinterval can be arbitrarily small. Moreover, as $d'$ values that satisfy \eqref{eq:dprime_exists} can be used, and \lambdalb{a}{d}, \lambdaub{a}{d}, and \maxaprime{d}{\epsinterval} are polynomials, we can generate infinitely many disjoint intervals. We can therefore conclude that for any finite discretisation of $\lambda$, \lambdadis, an interval can be created that contains no values from \lambdadisfull, but contains all values of $\lambda$ for which \mip solves to optimality.
\begin{align}
    \interval{\maxaprime{d'}{\epsinterval}}{d'} \subset \lambdadisgapsfull, \; \text{where} \quad 0 < \epsinterval \leq \maxa{d'}
\end{align}
Finally, Lemmas \ref{lem:lp_opt} - \ref{lem:objparal_lp_optimal} ensure that each stage of the solving process, all cuts are valid for any fractional feasible LP optimal solution for all \mip, where $0 \leq d \leq 1$ and $0 \leq a \leq \maxa{d}$. Moreover, the Lemmas guarantee that only after applying \goodcut is an integer optimal solution found. 

We therefore have shown how fixing a global value of $\lambda$ to a constant for use in the MILP solving process while disregarding all instance information can result in infinitely worse performance for infinitely many instances. 
\end{proof}

\begin{corollary}[thm:main]
There exists an infinite family of MILP instances together with an infinite amount of family-wide valid cuts, which do not solve to integer optimality for any $\lambda$ when using a pure cutting plane approach and applying a single cut per selection round.
\end{corollary}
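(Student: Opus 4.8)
The plan is to reuse the entire machinery built for Theorem~\ref{thm:main}, but instead of tuning $(a,d)$ so that the good-cut interval $[\lambdalb{a}{d},\lambdaub{a}{d}]$ lands inside a prescribed gap, I would push $a$ just past $\maxa{d}$ so that this interval becomes \emph{empty}. By Lemma~\ref{lem:maxa}, $\maxa{d}$ is by definition the largest $a$ for which $(a,d,\lambda)\in\region$ for some $\lambda$; hence for any $a>\maxa{d}$ and any $\lambda\in[0,1]$ the triple $(a,d,\lambda)$ violates at least one of \eqref{eq:gc_over_isc}--\eqref{eq:gc_over_opc} strictly, so $\max\{\score{\intsupcut{n}},\score{\objparalcut{n}}\}>\score{\goodcut}$ and \goodcut is never the top-scoring cut --- not even under its favourable tie-breaking rule.

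First I would fix the admissible range of $a$. Lemmas~\ref{lem:lp_opt}, \ref{lem:intsup_lp_optimal} and \ref{lem:objparal_lp_optimal} are proved by checking finitely many \emph{strict} polynomial inequalities in $a$ that hold throughout $[0,\maxa{d}]$, in particular at the endpoint $a=\maxa{d}$; by continuity each of them persists on $[0,\maxa{d}+\delta)$ for some $\delta>0$ (equivalently, one re-runs the same symbolic check on a slightly larger interval). Fix any $d\in[0,1]$ and any $a\in(\maxa{d},\maxa{d}+\delta)$. Then: the original LP optimum of \mip is the fractional vertex $(\tfrac{-1}{2},3,\tfrac12)$ (Lemma~\ref{lem:lp_opt}); after applying \intsupcut{n} (resp.\ \objparalcut{n}) the LP optimum is a fractional point of $\intsupcutpoints{n}\setminus\integerpoints$ (resp.\ $\objparalcutpoints{n}\setminus\integerpoints$) by Lemmas~\ref{lem:intsup_lp_optimal}, \ref{lem:objparal_lp_optimal}; and Lemmas~\ref{lem:dom_obj_cut}, \ref{lem:dom_int_cut}, whose statements do not involve $a$, guarantee that a deeper cut of either type (via $\widehat{\objparalcut{n+1}}$ or \intsupcut{n+1}) always removes the newly created facet.

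Next I would run the simulated pure cutting-plane process. At round $1$ the candidates are \goodcut, \intsupcut{1}, \objparalcut{1}; since $a>\maxa{d}$, for every $\lambda$ the winner is whichever of \intsupcut{1}, \objparalcut{1} has the larger score, and by the observation above it strictly beats \goodcut (if \goodcut were a maximiser, both \eqref{eq:gc_over_isc} and \eqref{eq:gc_over_opc} would hold, contradicting $a>\maxa{d}$). Crucially, all candidate scores depend only on the coefficient vectors (Equations~\eqref{eq:intsup_gc}--\eqref{eq:obj_paral_opc}) and not on the iteration-dependent right-hand sides, and the variants $\widehat{\objparalcut{n+1}}$, \intsupcut{n+1} carry the same coefficients as \objparalcut, \intsupcut respectively; so the ordering of the three scores is identical in every round. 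Hence, for a given $\lambda$, the same type of cut (integer-support or objective-parallel) is selected in every round, \goodcut is never selected, the LP optimum stays fractional forever (Lemmas~\ref{lem:intsup_lp_optimal}/\ref{lem:objparal_lp_optimal}), and the process never reaches the integer optimum $(1,1,0)$, which is feasible and optimal by Lemma~\ref{lem:int_feas_points}.

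Finally, to obtain the infinite family and the infinitely many family-wide valid cuts, I would let $a$ range over the non-degenerate interval $(\maxa{d},\maxa{d}+\delta)$ (or alternatively let $d$ range over $[0,1]$): distinct parameters give distinct objective vectors, hence infinitely many pairwise distinct instances \mip, each a genuine MILP whose integer optimum is $(1,1,0)$. The cuts $\goodcut$, $\{\intsupcut{n}\}_{n\in\naturals}$, $\{\objparalcut{n}\}_{n\in\naturals}$ are infinitely many and, since their validity (not separating $(0,0,0)$ or $(1,1,0)$) depends only on $\epsilon_n<0.1$ and not on $(a,d)$, are valid for the whole family. Combining the previous two paragraphs yields the corollary. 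The only genuine obstacle is the first step --- certifying that the LP-optimality lemmas survive past $a=\maxa{d}$ --- and that is a routine continuity (or re-computation) argument rather than a new idea; in fact the inequalities in Lemma~\ref{lem:lp_opt} hold for every $a\ge 0$, and those in Lemmas~\ref{lem:intsup_lp_optimal}--\ref{lem:objparal_lp_optimal} only become easier as $a$ grows (larger $a$ favours the fractional vertices with $x_3\approx\tfrac12$ over the integer points with $x_3=0$), so the required $\delta$ is comfortably positive.
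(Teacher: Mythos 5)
Your proposal is correct and follows essentially the same route as the paper: push $a$ just beyond $\maxa{d}$ (the paper uses $\maxa{d}+\reverseeps$ with $0<\reverseeps\leq 0.1$) so that no $\lambda\in[0,1]$ ever scores \goodcut highest, and then re-verify Lemmas \ref{lem:lp_opt}--\ref{lem:objparal_lp_optimal} on the enlarged range of $a$. Your additional remarks (score ordering is round-invariant, family-wide validity of the cuts depends only on $\epsilon_n<0.1$) just make explicit what the paper's proof leaves implicit.
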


\begin{proof}
To show this we take the following function, where $0 \leq d \leq 1$:
\begin{align*}
    \reversemaxaprime{d}{\reverseeps} := \maxa{d} + \reverseeps, \quad 0 < \reverseeps \leq 0.1
\end{align*}
Any such value of $a$ retrieved from this function will lie outside of \region for all $0 \leq d \leq 1$. There thus would exist no $\lambda$ value that results in finite termination, as $\goodcut$ is never scored at least as high as the other cuts. 

Similar to the proof of Theorem \ref{thm:main}, we need to ensure that the LP optimal point at all times during the solving process is appropriate, and that the same integer optimal point stays integer optimal for all $0 \leq d \leq 1$ and $\maxa{d} < a \leq \reversemaxaprime{d}{\reverseeps}$. We therefore redo the proofs of Lemmas \ref{lem:lp_opt} - \ref{lem:objparal_lp_optimal} but change the range of values of $a$. 
\end{proof}

\section{Functions of Appendix \ref{sec:proof}}
\label{sec:appendix_mathematica}

$\maxa{d} := \scriptstyle \frac{-2680 \sqrt{101} d+2020 \sqrt{201} d-6767 \sqrt{2}-27068 \sqrt{101}+22220 \sqrt{201}}{6767 \sqrt{2}-202 \sqrt{201}}$

$\lambdalb{a}{d}_{1} := \scriptstyle \sqrt{20301} \sqrt{\left(a^2+d (d+20)+101\right)}$

$\lambdalb{a}{d}_{2} :=  \scriptstyle 101 a^2+a \left(-20 \left(\sqrt{20301}-101\right) d-202 \left(\sqrt{20301}-110\right)\right)$

$\lambdalb{a}{d}_{3} :=  \scriptstyle 20 d \left(-10 \left(\sqrt{20301}-151\right) d-211 \sqrt{20301}+31411\right)-22220 \sqrt{20301}+3272501$

$\lambdalb{a}{d}_{4} := \scriptstyle -606 a^2+12 a \left(10 \left(\sqrt{20301}-101\right) d+101 \left(\sqrt{20301}-110\right)\right)$

$\lambdalb{a}{d}_{5} := \scriptstyle 120 d \left(10 \left(\sqrt{20301}-151\right) d+211 \sqrt{20301}-31411\right)+606 \left(220 \sqrt{20301}-32401\right)$

$\lambdaub{a}{d}_{6} := \scriptstyle 5555 a^2+24 a \left(10 \left(\sqrt{20301}-101\right) d+101 \left(\sqrt{20301}-110\right)\right)$

$\lambdalb{a}{d}_{7} := \scriptstyle d \left(\left(2400 \sqrt{20301}-355633\right) d+50640 \sqrt{20301}-7403300\right)+505 \left(528 \sqrt{20301}-76409\right)$

$\lambdalb{a}{d} := \frac{2\left(\lambdalb{a}{d}_{1} \sqrt{\lambdalb{a}{d}_{2} + \lambdalb{a}{d}_{3}} + \lambdalb{a}{d}_{4} + \lambdalb{a}{d}_{5} \right)}{\lambdalb{a}{d}_{6} + \lambdalb{a}{d}_{7}}$

$\lambdaub{a}{d}_{1} := \scriptstyle -\left(a^2+d (d+20)+101\right)$

$\lambdaub{a}{d}_{2} := \scriptstyle \left(2 \sqrt{402}-203\right) a^2+a \left(20 \left(\sqrt{402}-2\right) d+222 \sqrt{402}-842\right)+20 d \left(-10 d+\sqrt{402}-220\right)+220 \sqrt{402}-24401$

$\lambdaub{a}{d}_{3} := \scriptstyle \left(6 \sqrt{402}-609\right) a^2+6 a \left(10 \left(\sqrt{402}-2\right) d+111 \sqrt{402}-421\right)+60 d \left(-10 d+\sqrt{402}-220\right)+660 \sqrt{402}-73203 $

$\lambdaub{a}{d}_{4} := \scriptstyle \left(6 \sqrt{402}-475\right) a^2+6 a \left(10 \left(\sqrt{402}-2\right) d+111 \sqrt{402}-421\right)+2 d \left(-233 d+30 \sqrt{402}-5260\right)+660 \sqrt{402}-59669$

$\lambdaub{a}{d} := \frac{\sqrt{402}\sqrt{\lambdaub{a}{d}_{1} \lambdaub{a}{d}_{2}} + \lambdaub{a}{d}_{3}}{\lambdaub{a}{d}_{4}}$

\section{A Guide to a Forward Pass of the GCNN}
\label{sec:forward_pass}

This section should be used to provide an intuitive understanding of our policy network, which is parameterised as a GCNN. For a more complete introduction to graph neural networks that also provides helpful visualisations, we refer readers to \cite{gnn}. Throughout this section we will also refer to multi-layer perceptrons, which from now we simply refer to as (feed-forward) neural networks. A neural network is a function, which passes its input through a series of alternating linear transformations and non-linear activation functions. Note that while our design makes use of standard neural networks, other architecture types can be used. We refer readers to \cite{goodfellow2016deep} for a thorough overview.

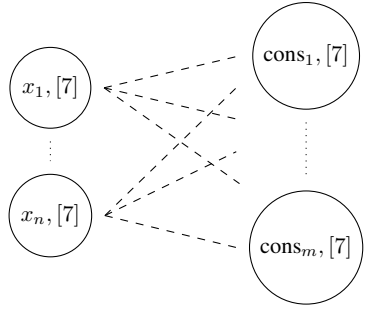
\begin{figure}[h]
\centering
\begin{tikzpicture}[scale=0.85,transform shape]
    \node[draw,circle] (x1) at (9,-2.5) {$x_{1}, [7]$};
    \node[draw,circle] (xn) at (9,-4.5) {$x_{n}, [7]$};
    
    \node[draw,circle] (c1) at (13,-2) {$\text{cons}_{1}, [7]$};
    \node[draw,circle] (cm) at (13,-5) {$\text{cons}_{m}, [7]$};
    
    \draw[dotted] ($(x1.south)+(0, -0.2)$) -- ($(xn.north) + (0, 0.2)$);
    \draw[dotted] ($(c1.south)+(0, -0.2)$) -- ($(cm.north) + (0, 0.2)$);
    
    \draw[dashed] ($(x1.east)+(0.2,0)$) -- ($(c1.west)+(-0.2,0)$);
    \draw[dashed] ($(x1.east)+(0.2,0)$) -- ($(c1.west)+(-0.2,-1)$);
    \draw[dashed] ($(x1.east)+(0.2,0)$) -- ($(c1.west)+(-0.2,-2)$);
    
    \draw[dashed] ($(xn.east)+(0.2,0)$) -- ($(cm.west)+(-0.2,0)$);
    \draw[dashed] ($(xn.east)+(0.2,0)$) -- ($(cm.west)+(-0.2,1.5)$);
    \draw[dashed] ($(xn.east)+(0.2,0)$) -- ($(cm.west)+(-0.2,2.5)$);
\end{tikzpicture}
\caption{A visualisation the initial state $s_{0}$. At each node the size of the corresponding feature vector is given, e.g. [7]. Note that the edges additionally have features, but do not appear for ease of visualisation.}
\label{fig:gcnn_1}
\end{figure}

Recall that we present our MILP instance via a constraint-variable bipartite graph, and that a variable and a constraint share an edge when the variable appears in the constraint with a non-zero coefficient. See Figure \ref{fig:gcnn_1} for an initial representation of $s_{0} \in \mathcal{G}$. Recall also that the goal of the GCNN is to parameterise our policy, $\pi_{\theta}(\cdot|s_{0} \in \mathcal{G})$, outputting the mean, $\mu \in \reals^{4}$, of a distribution, \normal{\mu}{\gamma I}, over the cut selector parameter space.

We will now begin the forward pass of the GCNN. Consider a node of the bipartite graph that represents the variable $x_{i}$ of the MILP. This node has an attached set of features, see Table \ref{tab:features} for a complete list, which form a vector. This feature vector gets transformed by a neural network. In our design this initially transforms our 7-dimensional feature vector to a 32-dimensional vector. This operation gets applied to all feature vectors representing variables, using the same neural network. The new bipartite graph is denoted $\mathbf{H}_{\mathbf{V}}^{1}$. We also do this procedure for all feature vectors corresponding to a constraint of the MILP, albeit with a different neural network, which results in $\mathbf{H}_{\mathbf{C}}^{1}$. The result is visualised in Figure \ref{fig:gcnn_2}. We note that there is no order to the two transformations.

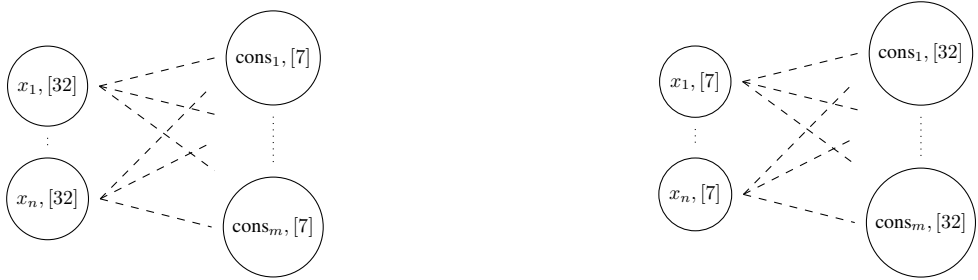
\begin{figure}[h]
\begin{subfigure}[b]{0.475\textwidth}
\centering
\begin{tikzpicture}[scale=0.75,transform shape]
    \node[draw,circle] (x1) at (9,-2.5) {$x_{1}, [32]$};
    \node[draw,circle] (xn) at (9,-4.5) {$x_{n}, [32]$};
    
    \node[draw,circle] (c1) at (13,-2) {$\text{cons}_{1}, [7]$};
    \node[draw,circle] (cm) at (13,-5) {$\text{cons}_{m}, [7]$};
    
    \draw[dotted] ($(x1.south)+(0, -0.2)$) -- ($(xn.north) + (0, 0.2)$);
    \draw[dotted] ($(c1.south)+(0, -0.2)$) -- ($(cm.north) + (0, 0.2)$);
    
    \draw[dashed] ($(x1.east)+(0.2,0)$) -- ($(c1.west)+(-0.2,0)$);
    \draw[dashed] ($(x1.east)+(0.2,0)$) -- ($(c1.west)+(-0.2,-1)$);
    \draw[dashed] ($(x1.east)+(0.2,0)$) -- ($(c1.west)+(-0.2,-2)$);
    
    \draw[dashed] ($(xn.east)+(0.2,0)$) -- ($(cm.west)+(-0.2,0)$);
    \draw[dashed] ($(xn.east)+(0.2,0)$) -- ($(cm.west)+(-0.2,1.5)$);
    \draw[dashed] ($(xn.east)+(0.2,0)$) -- ($(cm.west)+(-0.2,2.5)$);
\end{tikzpicture}
\end{subfigure}
\hfill
\begin{subfigure}[b]{0.475\textwidth}
\centering
\begin{tikzpicture}[scale=0.75,transform shape]
    \node[draw,circle] (x1) at (9,-2.5) {$x_{1}, [7]$};
    \node[draw,circle] (xn) at (9,-4.5) {$x_{n}, [7]$};
    
    \node[draw,circle] (c1) at (13,-2) {$\text{cons}_{1}, [32]$};
    \node[draw,circle] (cm) at (13,-5) {$\text{cons}_{m}, [32]$};
    
    \draw[dotted] ($(x1.south)+(0, -0.2)$) -- ($(xn.north) + (0, 0.2)$);
    \draw[dotted] ($(c1.south)+(0, -0.2)$) -- ($(cm.north) + (0, 0.2)$);
    
    \draw[dashed] ($(x1.east)+(0.2,0)$) -- ($(c1.west)+(-0.2,0)$);
    \draw[dashed] ($(x1.east)+(0.2,0)$) -- ($(c1.west)+(-0.2,-1)$);
    \draw[dashed] ($(x1.east)+(0.2,0)$) -- ($(c1.west)+(-0.2,-2)$);
    
    \draw[dashed] ($(xn.east)+(0.2,0)$) -- ($(cm.west)+(-0.2,0)$);
    \draw[dashed] ($(xn.east)+(0.2,0)$) -- ($(cm.west)+(-0.2,1.5)$);
    \draw[dashed] ($(xn.east)+(0.2,0)$) -- ($(cm.west)+(-0.2,2.5)$);
\end{tikzpicture}
\end{subfigure}
\caption{(Left) $\mathbf{H}_{\mathbf{V}}^{1}$. (Right) $\mathbf{H}_{\mathbf{C}}^{1}$}
\label{fig:gcnn_2}
\end{figure}

The key to graph neural networks, for example the GCNN, is using the same neural network to transform multiple feature vectors, e.g. all constraint feature vectors. This allows the GCNN to take arbitrarily sized bipartite graphs as input, and therefore work on any MILP instance.

Until this point, no information has been shared between any two variables or constraints. In our design information is gathered per node from its neighbours by summing the transformed feature vectors of all incident edges and adjacent nodes. Note that due to the bipartite nature of our graph information either flows from variables to constraints, or vice-versa. This gathering of information for either all variable nodes or all constraint nodes is called a \emph{half-convolution}, or alternatively message passing. To perform this half-convolution we require the transformed feature vectors to all have the same dimension, with 32 being our choice. Note that the feature vectors on the edges are also transformed during the half-convolution. For a feature vector representing a variable, the half-convolution is defined as:
\begin{align*}
    \mathbf{v}_{i}'' := \mathtt{nn'}(\sum_{j \in N(x_{i})}(\mathtt{nn}(\mathbf{v}_{i}' + \mathbf{e}'_{(i,j)} + \mathbf{c}_{j}')), \mathbf{v}_{i}')
\end{align*}
Here, $\mathbf{v}_{i}''$, $\mathbf{v}_{i}'$, $\mathbf{c}_{j}'$, and $\mathbf{e}_{(i,j)}'$ are transformed feature vectors of variable $x_{i}$, constraint $j$, and edge $(i,j)$. The functions $\mathtt{nn}$ and $\mathtt{nn'}$ are neural networks, with $\mathtt{nn}'$ taking a concatenated input, and $N(x_{i})$ is the neighbourhood of the variable node of $x_{i}$. For a feature vector representing a constraint, the half-convolution is defined in a mirrored manner. See Figure \ref{fig:gcnn_3} for $\mathbf{H}_{\mathbf{C}}^{2}$ and $\mathbf{H}_{\mathbf{V}}^{2}$.

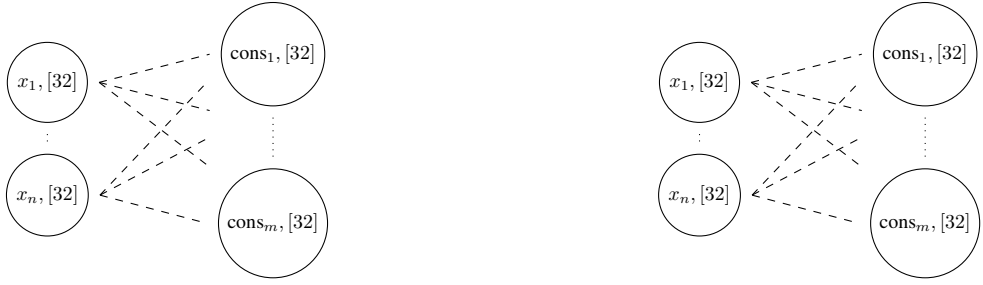
\begin{figure}[h]
\begin{subfigure}[b]{0.475\textwidth}
\centering
\begin{tikzpicture}[scale=0.75,transform shape]
    \node[draw,circle] (x1) at (9,-2.5) {$x_{1}, [32]$};
    \node[draw,circle] (xn) at (9,-4.5) {$x_{n}, [32]$};
    
    \node[draw,circle] (c1) at (13,-2) {$\text{cons}_{1}, [32]$};
    \node[draw,circle] (cm) at (13,-5) {$\text{cons}_{m}, [32]$};
    
    \draw[dotted] ($(x1.south)+(0, -0.2)$) -- ($(xn.north) + (0, 0.2)$);
    \draw[dotted] ($(c1.south)+(0, -0.2)$) -- ($(cm.north) + (0, 0.2)$);
    
    \draw[dashed] ($(x1.east)+(0.2,0)$) -- ($(c1.west)+(-0.2,0)$);
    \draw[dashed] ($(x1.east)+(0.2,0)$) -- ($(c1.west)+(-0.2,-1)$);
    \draw[dashed] ($(x1.east)+(0.2,0)$) -- ($(c1.west)+(-0.2,-2)$);
    
    \draw[dashed] ($(xn.east)+(0.2,0)$) -- ($(cm.west)+(-0.2,0)$);
    \draw[dashed] ($(xn.east)+(0.2,0)$) -- ($(cm.west)+(-0.2,1.5)$);
    \draw[dashed] ($(xn.east)+(0.2,0)$) -- ($(cm.west)+(-0.2,2.5)$);
\end{tikzpicture}
\end{subfigure}
\hfill
\begin{subfigure}[b]{0.475\textwidth}
\centering
\begin{tikzpicture}[scale=0.75,transform shape]
    \node[draw,circle] (x1) at (9,-2.5) {$x_{1}, [32]$};
    \node[draw,circle] (xn) at (9,-4.5) {$x_{n}, [32]$};
    
    \node[draw,circle] (c1) at (13,-2) {$\text{cons}_{1}, [32]$};
    \node[draw,circle] (cm) at (13,-5) {$\text{cons}_{m}, [32]$};
    
    \draw[dotted] ($(x1.south)+(0, -0.2)$) -- ($(xn.north) + (0, 0.2)$);
    \draw[dotted] ($(c1.south)+(0, -0.2)$) -- ($(cm.north) + (0, 0.2)$);
    
    \draw[dashed] ($(x1.east)+(0.2,0)$) -- ($(c1.west)+(-0.2,0)$);
    \draw[dashed] ($(x1.east)+(0.2,0)$) -- ($(c1.west)+(-0.2,-1)$);
    \draw[dashed] ($(x1.east)+(0.2,0)$) -- ($(c1.west)+(-0.2,-2)$);
    
    \draw[dashed] ($(xn.east)+(0.2,0)$) -- ($(cm.west)+(-0.2,0)$);
    \draw[dashed] ($(xn.east)+(0.2,0)$) -- ($(cm.west)+(-0.2,1.5)$);
    \draw[dashed] ($(xn.east)+(0.2,0)$) -- ($(cm.west)+(-0.2,2.5)$);
\end{tikzpicture}
\end{subfigure}
\caption{(Left) $\mathbf{H}_{\mathbf{C}}^{2}$. (Right) $\mathbf{H}_{\mathbf{V}}^{2}$}
\label{fig:gcnn_3}
\end{figure} 

As our design will ultimately extract $\mu$ from the transformed variable feature vectors, we first perform the half-convolution over all constraints, and then over all variables. This guarantees that at the representation $\mathbf{H}_{\mathbf{V}}^{2}$, all variables that feature together in a constraint have shared and received information. In a similar manner to the beginning of our forward pass, we now reduce all resulting variable feature vectors using a neural network to dimension 4, i.e. the amount of cut selector parameters. The result $\mathbf{H}_{\mathbf{V}}^{3}$ is shown in Figure \ref{fig:gcnn_4}. Finally, we average over all reduced variable feature vectors, resulting in the mean $\mu \in \reals^{4}$ of our policy. This is a forward pass of the GCNN.

\begin{figure}[h]
\centering
\begin{tikzpicture}[scale=0.75,transform shape]
    \node[draw,circle] (x1) at (9,-2.5) {$x_{1}, [4]$};
    \node[draw,circle] (xn) at (9,-4.5) {$x_{n}, [4]$};
    
    \node[draw,circle] (c1) at (13,-2) {$\text{cons}_{1}, [32]$};
    \node[draw,circle] (cm) at (13,-5) {$\text{cons}_{m}, [32]$};
    
    \draw[dotted] ($(x1.south)+(0, -0.2)$) -- ($(xn.north) + (0, 0.2)$);
    \draw[dotted] ($(c1.south)+(0, -0.2)$) -- ($(cm.north) + (0, 0.2)$);
    
    \draw[dashed] ($(x1.east)+(0.2,0)$) -- ($(c1.west)+(-0.2,0)$);
    \draw[dashed] ($(x1.east)+(0.2,0)$) -- ($(c1.west)+(-0.2,-1)$);
    \draw[dashed] ($(x1.east)+(0.2,0)$) -- ($(c1.west)+(-0.2,-2)$);
    
    \draw[dashed] ($(xn.east)+(0.2,0)$) -- ($(cm.west)+(-0.2,0)$);
    \draw[dashed] ($(xn.east)+(0.2,0)$) -- ($(cm.west)+(-0.2,1.5)$);
    \draw[dashed] ($(xn.east)+(0.2,0)$) -- ($(cm.west)+(-0.2,2.5)$);
\end{tikzpicture}
\caption{$\mathbf{H}_{\mathbf{V}}^{3}$.}
\label{fig:gcnn_4}
\end{figure}

We note that the decision to average the reduced variable feature vectors was inspired by the original design for branching, see \cite{gasse2019exact}. For instance, it would be possible to change the order of the half-convolutions and extract $\mu$ from the transformed constraint feature vectors. This is just one way to change the specific design, with other examples being the type of activation functions, layer structure, and the dimension of each embedding. Our design also makes heavy use of layer normalisation, see \cite{layernorm}, which followed from observations in \cite{gasse2019exact} on improved generalisation capabilities. For our complete design we refer readers to \url{https://github.com/Opt-Mucca/Adaptive-Cutsel-MILP}.

\section{Per Instance Results and Statistics of Section \ref{sec:experiments}}
\label{sec:appendix_experiments}

\begin{table}[h]
\centering
\begin{tabular}{lccccccc}
& \multicolumn{3}{c}{MIPLIB} && \multicolumn{3}{c}{NN-Verification} \\
\cline{2-4} \cline{6-8}
 & Min & Median & Max && Min & Median & Max \\
\hline
Time (s) & 0.001 & 0.037 & 0.944 && 0.032 & 0.087 & 1.378 \\
\hline
\end{tabular}
\caption{Inference time statistics of trained GCNN from Experiment \ref{subsec:adaptive_root} per instance-seed pair.}
\label{tab:inference_time}
\end{table}

\begin{table}[h]
    \centering
    \begin{tabular}{lccccccc}
    & \multicolumn{3}{c}{MIPLIB} && \multicolumn{3}{c}{NN-Verification} \\
    \cline{2-4} \cline{6-8}
    Variable Information & Range & Mean & Median && Range & Mean & Median \\
    \hline
    Num. vars & [154, 23618] & 3251.33 & 1909 && [987, 2101] & 1647.92 & 1653 \\
    Num. bin. vars & [0, 16360], & 1498.03 & 396 && [107, 312] & 183.84 & 183 \\
    Num. int. vars & [0, 5081] & 171.70 & 0 && [0, 0] & 0 & 0 \\
    Num. impl. int. vars & [0, 1] & 0.01 & 0 && [0, 0] & 0 & 0\\
    Num. cont. vars & [0, 23520] & 1581.59 & 393 && [869, 1856] & 1464.08 & 1473 \\
    \hline
    Constraint Information & Range & Mean & Median && Range & Mean & Median \\
    \hline
    Num. cons & [32, 17366] & 2261.62 & 865 && [923, 1933] & 1444.43 & 1452 \\
    Num. linear cons & [0, 13263] & 1181.14 & 306 && [813, 1629] & 1263.82 & 1276 \\
    Num. logicor cons & [0, 17323] & 225.29 & 0 && [0, 0] & 0 & 0 \\
    Num. knapsack cons & [0, 288] & 7.48 & 0 && [0, 0] & 0 & 0 \\
    Num. setppc cons & [0, 11538] & 284.04 & 0 && [0, 0] & 0 & 0 \\
    Num. varbound cons & [0, 5000] & 563.67 & 17 && [106, 304] & 180.61 & 180 \\
    \hline \\
    \end{tabular}
    \caption{Instance statistics for Experiments \ref{subsec:lb_experiment}, \ref{subsec:random_seed}, \ref{subsec:smac}, \ref{subsec:adaptive_root}, and \ref{subsec:tree_experiment}.}
    \label{tab:instance_stats}
\end{table}

\begin{table}
\scriptsize
\begin{minipage}{0.5\textwidth}
\begin{center}
\begin{tabularx}{\textwidth}{l@{\hspace{1.5\tabcolsep}}S@{\hspace{1.5\tabcolsep}}S@{\hspace{1.5\tabcolsep}}S@{\hspace{1.5\tabcolsep}}S@{\hspace{1.5\tabcolsep}}S@{\hspace{1.5\tabcolsep}}c}
Instance & \text{primal-dual} & $\lambda_{1}$ & $\lambda_{2}$ & $\lambda_{3}$ & $\lambda_{4}$ & \#BP \\
\hline
    22433 & 0.22 & 0.0 & 0.1 & 0.7 & 0.2 & 1 \\
    23588 & 0.02 & 0.2 & 0.5 & 0.1 & 0.2 & 2 \\
    50v-10 & 0.04 & 0.1 & 0.0 & 0.4 & 0.5 & 1 \\
    a1c1s1 & 0.25 & 0.6 & 0.1 & 0.3 & 0.0 & 1 \\
    a2c1s1 & 0.36 & 0.6 & 0.1 & 0.3 & 0.0 & 1 \\
    app3 & 0.28 & 0.6 & 0.1 & 0.2 & 0.1 & 1 \\
    b1c1s1 & 0.38 & 0.2 & 0.2 & 0.3 & 0.3 & 1 \\
    b2c1s1 & 0.33 & 0.1 & 0.2 & 0.3 & 0.4 & 1 \\
    beasleyC1 & 0.17 & 0.2 & 0.1 & 0.0 & 0.7 & 1 \\
    beasleyC2 & 0.04 & 0.0 & 0.5 & 0.0 & 0.5 & 1 \\
    berlin & 0.09 & 0.2 & 0.6 & 0.0 & 0.2 & 1 \\
    berlin\_5\_8\_0 & 0.2 & 0.0 & 0.4 & 0.3 & 0.3 & 1 \\
    bg512142 & 0.02 & 0.0 & 0.8 & 0.0 & 0.2 & 1 \\
    bienst1 & 0.29 & 0.2 & 0.0 & 0.6 & 0.2 & 1 \\
    bienst2 & 0.21 & 0.2 & 0.0 & 0.7 & 0.1 & 1 \\
    bppc8-09 & 0.03 & 0.7 & 0.3 & 0.0 & 0.0 & 1 \\
    brasil & 0.05 & 0.5 & 0.0 & 0.2 & 0.3 & 1 \\
    dg012142 & 0.0 & 0.1 & 0.0 & 0.0 & 0.9 & 1 \\
    dws008-01 & 0.03 & 0.2 & 0.7 & 0.1 & 0.0 & 1 \\
    eil33-2 & 0.01 & 0.1 & 0.5 & 0.4 & 0.0 & 1 \\
    exp-1-500-5-5 & 0.69 & 0.3 & 0.0 & 0.3 & 0.4 & 1 \\
    fhnw-schedule-paira100 & 0.01 & 0.0 & 0.2 & 0.7 & 0.1 & 1 \\
    g200x740 & 0.64 & 0.4 & 0.1 & 0.1 & 0.4 & 1 \\
    glass4 & 0.01 & 0.5 & 0.2 & 0.1 & 0.2 & 1 \\
    gmu-35-40 & 0.01 & 0.4 & 0.0 & 0.0 & 0.6 & 1 \\
    graphdraw-domain & 0.01 & 0.2 & 0.0 & 0.8 & 0.0 & 1 \\
    graphdraw-gemcutter & 0.01 & 0.6 & 0.1 & 0.3 & 0.0 & 1 \\
    h50x2450 & 0.54 & 0.0 & 0.0 & 0.0 & 1.0 & 1 \\
    hgms-det & 0.29 & 0.5 & 0.3 & 0.0 & 0.2 & 1 \\
    ic97\_potential & 0.01 & 0.8 & 0.0 & 0.1 & 0.1 & 1 \\
    ic97\_tension & 0.09 & 0.2 & 0.3 & 0.5 & 0.0 & 1 \\
    icir97\_tension & 0.14 & 0.2 & 0.1 & 0.0 & 0.7 & 1 \\
    k16x240b & 0.12 & 0.5 & 0.3 & 0.0 & 0.2 & 1 \\
    lotsize & 0.15 & 0.3 & 0.1 & 0.5 & 0.1 & 1 \\
    mik-250-20-75-2 & 0.11 & 0.3 & 0.4 & 0.0 & 0.3 & 1 \\
    mik-250-20-75-3 & 0.09 & 0.3 & 0.1 & 0.0 & 0.6 & 1 \\
    mik-250-20-75-5 & 0.16 & 0.2 & 0.1 & 0.7 & 0.0 & 1 \\
    milo-v12-6-r2-40-1 & 0.33 & 0.9 & 0.0 & 0.1 & 0.0 & 1 \\
    milo-v13-4-3d-4-0 & 0.01 & 0.7 & 0.2 & 0.0 & 0.1 & 1 \\
    misc07 & 0.06 & 0.8 & 0.0 & 0.1 & 0.1 & 1 \\
    mkc & 0.28 & 0.4 & 0.3 & 0.2 & 0.1 & 1 \\
    n3700 & 0.08 & 0.1 & 0.0 & 0.2 & 0.7 & 1 \\
    n3707 & 0.09 & 0.2 & 0.0 & 0.8 & 0.0 & 1
\end{tabularx}
\end{center}

\end{minipage} \hfill
\begin{minipage}{0.5\textwidth}
\begin{center}
\begin{tabularx}{\textwidth}{l@{\hspace{1.5\tabcolsep}}S@{\hspace{1.5\tabcolsep}}S@{\hspace{1.5\tabcolsep}}S@{\hspace{1.5\tabcolsep}}S@{\hspace{1.5\tabcolsep}}S@{\hspace{1.5\tabcolsep}}c}
Instance & \text{primal-dual} & $\lambda_{1}$ & $\lambda_{2}$ & $\lambda_{3}$ & $\lambda_{4}$ & \#BP \\
\hline
    n370b & 0.08 & 0.3 & 0.1 & 0.6 & 0.0 & 1 \\
    n5-3 & 0.2 & 0.1 & 0.4 & 0.4 & 0.1 & 1 \\
    n7-3 & 0.09 & 0.6 & 0.0 & 0.4 & 0.0 & 1 \\
    n9-3 & 0.16 & 0.1 & 0.2 & 0.3 & 0.4 & 1 \\
    neos-1423785 & 0.04 & 0.2 & 0.0 & 0.0 & 0.8 & 1 \\
    neos-1445738 & 0.0 & 0.2 & 0.4 & 0.2 & 0.2 & 1 \\
    neos-1456979 & 0.21 & 0.5 & 0.0 & 0.0 & 0.5 & 1 \\
    neos-3046601-motu & 0.01 & 0.2 & 0.4 & 0.1 & 0.3 & 1 \\
    neos-3046615-murg & 0.01 & 0.6 & 0.2 & 0.1 & 0.1 & 1 \\
    neos-3381206-awhea & 0.02 & 0.1 & 0.0 & 0.6 & 0.3 & 12 \\
    neos-3426085-ticino & 0.0 & 0.2 & 0.1 & 0.0 & 0.7 & 4 \\
    neos-3530905-gaula & 0.01 & 0.0 & 0.1 & 0.0 & 0.9 & 14 \\
    neos-3627168-kasai & 0.12 & 0.2 & 0.0 & 0.0 & 0.8 & 1 \\
    neos-4333464-siret & 0.0 & 0.0 & 0.4 & 0.3 & 0.3 & 1 \\
    neos-4387871-tavua & 0.02 & 0.2 & 0.6 & 0.0 & 0.2 & 1 \\
    neos-4650160-yukon & 0.02 & 0.1 & 0.1 & 0.0 & 0.8 & 1 \\
    neos-4736745-arroux & 0.07 & 0.3 & 0.1 & 0.1 & 0.5 & 1 \\
    neos-4954672-berkel & 0.25 & 0.9 & 0.0 & 0.0 & 0.1 & 1 \\
    neos-5076235-embley & 0.18 & 0.6 & 0.0 & 0.0 & 0.4 & 4 \\
    neos-5107597-kakapo & 0.01 & 0.0 & 0.4 & 0.0 & 0.6 & 1 \\
    neos-5260764-orauea & 0.02 & 0.5 & 0.1 & 0.4 & 0.0 & 1 \\
    neos-5261882-treska & 0.09 & 0.4 & 0.1 & 0.4 & 0.1 & 1 \\
    neos-631517 & 0.07 & 0.7 & 0.1 & 0.1 & 0.1 & 1 \\
    neos-691058 & 0.64 & 0.3 & 0.0 & 0.2 & 0.5 & 1 \\
    neos-860300 & 0.1 & 0.1 & 0.4 & 0.4 & 0.1 & 1 \\
    neos16 & 0.0 & 0.0 & 0.1 & 0.1 & 0.8 & 3 \\
    newdano & 0.05 & 0.1 & 0.1 & 0.5 & 0.3 & 1 \\
    nexp-150-20-1-5 & 0.28 & 0.1 & 0.0 & 0.1 & 0.8 & 3 \\
    nexp-150-20-8-5 & 0.86 & 0.3 & 0.0 & 0.3 & 0.4 & 1 \\
    p200x1188c & 0.01 & 0.0 & 0.1 & 0.8 & 0.1 & 1 \\
    p500x2988 & 0.11 & 0.6 & 0.2 & 0.1 & 0.1 & 1 \\
    pg & 0.46 & 0.4 & 0.1 & 0.2 & 0.3 & 1 \\
    pg5\_34 & 0.34 & 1.0 & 0.0 & 0.0 & 0.0 & 1 \\
    probportfolio & 0.01 & 0.2 & 0.2 & 0.1 & 0.5 & 1 \\
    prod2 & 0.02 & 0.3 & 0.1 & 0.1 & 0.5 & 1 \\
    r50x360 & 0.09 & 0.2 & 0.2 & 0.6 & 0.0 & 1 \\
    ran13x13 & 0.11 & 0.0 & 0.4 & 0.0 & 0.6 & 1 \\
    supportcase20 & 0.36 & 0.7 & 0.2 & 0.0 & 0.1 & 1 \\
    supportcase26 & 0.03 & 0.7 & 0.1 & 0.2 & 0.0 & 1 \\
    swath & 0.01 & 0.0 & 0.4 & 0.3 & 0.3 & 1 \\
    tanglegram6 & 0.0 & 0.1 & 0.1 & 0.2 & 0.6 & 1 \\
    timtab1CUTS & 0.04 & 0.0 & 0.2 & 0.6 & 0.2 & 1 \\
    tr12-30 & 0.73 & 0.6 & 0.1 & 0.2 & 0.1 & 1 \\
    usAbbrv-8-25\_70 & 0.72 & 0.2 & 0.3 & 0.3 & 0.2 & 1  
\end{tabularx}
\end{center}
\end{minipage}
\bigskip
\caption{Per instance results of Experiment \ref{subsec:lb_experiment} (Grid search). Primal-dual refers to the relative primal-dual difference improvement. $\{\lambda_{1}, \lambda_{2}, \lambda_{3}, \lambda_{4}\}$ are the multipliers for \texttt{dcd}, \texttt{eff}, \texttt{isp}, and \texttt{obp}. \#BP refers to the number of best parameter combinations. In the case of \#BP > 1, a single best choice $\{\lambda_{1}, \lambda_{2}, \lambda_{3}, \lambda_{4}\}$ is provided.}
\label{tab:per_instance_grid}
\end{table}
\begin{table}
\scriptsize
\begin{minipage}{0.5\textwidth}
\begin{center}
\begin{tabularx}{\textwidth}{l@{\hspace{0.5\tabcolsep}}S@{\hspace{0.5\tabcolsep}}S@{\hspace{0.5\tabcolsep}}S@{\hspace{0.5\tabcolsep}}S@{\hspace{0.5\tabcolsep}}S}
Instance & \text{primal-dual} & $\lambda_{1}$ & $\lambda_{2}$ & $\lambda_{3}$ & $\lambda_{4}$ \\
\hline
    22433 & 0.12 & 0.24 & 0.27 & 0.37 & 0.11 \\
    23588 & 0.01 & 0.25 & 0.28 & 0.36 & 0.12 \\
    50v-10 & -0.0 & 0.26 & 0.3 & 0.3 & 0.15 \\
    a1c1s1 & 0.18 & 0.26 & 0.3 & 0.29 & 0.15 \\
    a2c1s1 & 0.25 & 0.26 & 0.3 & 0.29 & 0.15 \\
    app3 & 0.09 & 0.26 & 0.3 & 0.29 & 0.15 \\
    b1c1s1 & 0.28 & 0.26 & 0.3 & 0.29 & 0.15 \\
    b2c1s1 & 0.22 & 0.26 & 0.3 & 0.29 & 0.15 \\
    beasleyC1 & 0.02 & 0.25 & 0.3 & 0.29 & 0.15 \\
    beasleyC2 & -0.03 & 0.25 & 0.3 & 0.29 & 0.15 \\
    berlin & -0.31 & 0.25 & 0.3 & 0.29 & 0.15 \\
    berlin\_5\_8\_0 & 0.0 & 0.25 & 0.28 & 0.33 & 0.13 \\
    bg512142 & 0.01 & 0.26 & 0.3 & 0.3 & 0.15 \\
    bienst1 & -0.05 & 0.26 & 0.3 & 0.29 & 0.15 \\
    bienst2 & 0.01 & 0.26 & 0.3 & 0.29 & 0.15 \\
    bppc8-09 & -0.03 & 0.25 & 0.29 & 0.32 & 0.13 \\
    brasil & -1.0 & 0.25 & 0.3 & 0.29 & 0.15 \\
    dg012142 & -0.0 & 0.26 & 0.3 & 0.29 & 0.15 \\
    dws008-01 & -0.01 & 0.26 & 0.3 & 0.3 & 0.14 \\
    eil33-2 & 0.0 & 0.26 & 0.3 & 0.3 & 0.15 \\
    exp-1-500-5-5 & 0.35 & 0.25 & 0.31 & 0.29 & 0.15 \\
    fhnw-schedule-paira100 & 0.01 & 0.27 & 0.27 & 0.3 & 0.16 \\
    g200x740 & 0.6 & 0.25 & 0.3 & 0.3 & 0.15 \\
    glass4 & 0.0 & 0.25 & 0.28 & 0.32 & 0.14 \\
    gmu-35-40 & -0.0 & 0.25 & 0.29 & 0.32 & 0.14 \\
    graphdraw-domain & 0.0 & 0.24 & 0.29 & 0.36 & 0.12 \\
    graphdraw-gemcutter & -0.0 & 0.24 & 0.29 & 0.35 & 0.12 \\
    h50x2450 & 0.32 & 0.25 & 0.3 & 0.3 & 0.15 \\
    hgms-det & 0.09 & 0.25 & 0.29 & 0.31 & 0.14 \\
    ic97\_potential & -0.0 & 0.26 & 0.29 & 0.3 & 0.15 \\
    ic97\_tension & 0.08 & 0.26 & 0.3 & 0.3 & 0.15 \\
    icir97\_tension & 0.05 & 0.25 & 0.28 & 0.31 & 0.16 \\
    k16x240b & 0.04 & 0.25 & 0.3 & 0.3 & 0.15 \\
    lotsize & 0.06 & 0.26 & 0.3 & 0.3 & 0.15 \\
    mik-250-20-75-2 & -0.08 & 0.25 & 0.29 & 0.3 & 0.16 \\
    mik-250-20-75-3 & -0.01 & 0.25 & 0.29 & 0.3 & 0.16 \\
    mik-250-20-75-5 & 0.05 & 0.25 & 0.29 & 0.3 & 0.16 \\
    milo-v12-6-r2-40-1 & 0.23 & 0.26 & 0.3 & 0.3 & 0.15 \\
    milo-v13-4-3d-4-0 & 0.01 & 0.25 & 0.31 & 0.3 & 0.14 \\
    misc07 & -0.0 & 0.26 & 0.28 & 0.33 & 0.13 \\
    mkc & 0.04 & 0.26 & 0.29 & 0.3 & 0.15 \\
    n3700 & 0.07 & 0.26 & 0.29 & 0.3 & 0.15 \\
    n3707 & 0.08 & 0.26 & 0.29 & 0.3 & 0.15
\end{tabularx}
\end{center}

\end{minipage} \hfill
\begin{minipage}{0.5\textwidth}
\begin{center}
\begin{tabularx}{\textwidth}{l@{\hspace{0.5\tabcolsep}}S@{\hspace{0.5\tabcolsep}}S@{\hspace{0.5\tabcolsep}}S@{\hspace{0.5\tabcolsep}}S@{\hspace{0.5\tabcolsep}}S}
Instance & \text{primal-dual} & $\lambda_{1}$ & $\lambda_{2}$ & $\lambda_{3}$ & $\lambda_{4}$\\
\hline
    n370b & 0.08 & 0.26 & 0.29 & 0.3 & 0.15 \\
    n5-3 & 0.0 & 0.25 & 0.32 & 0.28 & 0.15 \\
    n7-3 & 0.05 & 0.25 & 0.32 & 0.28 & 0.15 \\
    n9-3 & 0.13 & 0.25 & 0.32 & 0.28 & 0.15 \\
    neos-1423785 & -0.0 & 0.26 & 0.3 & 0.29 & 0.15 \\
    neos-1445738 & -0.0 & 0.25 & 0.32 & 0.28 & 0.15 \\
    neos-1456979 & 0.04 & 0.25 & 0.29 & 0.32 & 0.14 \\
    neos-3046601-motu & -0.0 & 0.26 & 0.28 & 0.31 & 0.15 \\
    neos-3046615-murg & -0.0 & 0.25 & 0.28 & 0.31 & 0.15 \\
    neos-3381206-awhea & 0.0 & 0.24 & 0.28 & 0.31 & 0.17 \\
    neos-3426085-ticino & -0.01 & 0.24 & 0.28 & 0.32 & 0.16 \\
    neos-3530905-gaula & -0.01 & 0.24 & 0.28 & 0.31 & 0.17 \\
    neos-3627168-kasai & 0.05 & 0.25 & 0.31 & 0.3 & 0.15 \\
    neos-4333464-siret & -0.03 & 0.24 & 0.3 & 0.32 & 0.13 \\
    neos-4387871-tavua & -0.02 & 0.25 & 0.3 & 0.32 & 0.14 \\
    neos-4650160-yukon & 0.01 & 0.25 & 0.3 & 0.3 & 0.15 \\
    neos-4736745-arroux & -0.06 & 0.24 & 0.28 & 0.32 & 0.16 \\
    neos-4954672-berkel & 0.14 & 0.26 & 0.3 & 0.29 & 0.15 \\
    neos-5076235-embley & -0.07 & 0.25 & 0.31 & 0.29 & 0.15 \\
    neos-5107597-kakapo & -0.0 & 0.27 & 0.27 & 0.31 & 0.16 \\
    neos-5260764-orauea & 0.01 & 0.26 & 0.29 & 0.31 & 0.14 \\
    neos-5261882-treska & 0.06 & 0.25 & 0.29 & 0.32 & 0.14 \\
    neos-631517 & 0.05 & 0.25 & 0.28 & 0.33 & 0.14 \\
    neos-691058 & 0.34 & 0.25 & 0.29 & 0.31 & 0.14 \\
    neos-860300 & -0.05 & 0.26 & 0.28 & 0.33 & 0.13 \\
    neos16 & 0.0 & 0.25 & 0.27 & 0.33 & 0.15 \\
    newdano & 0.02 & 0.26 & 0.3 & 0.3 & 0.15 \\
    nexp-150-20-1-5 & 0.22 & 0.25 & 0.3 & 0.3 & 0.15 \\
    nexp-150-20-8-5 & 0.67 & 0.25 & 0.29 & 0.32 & 0.14 \\
    p200x1188c & -0.0 & 0.25 & 0.3 & 0.29 & 0.15 \\
    p500x2988 & -0.02 & 0.25 & 0.3 & 0.29 & 0.15 \\
    pg & 0.16 & 0.26 & 0.3 & 0.3 & 0.14 \\
    pg5\_34 & 0.12 & 0.25 & 0.32 & 0.28 & 0.15 \\
    probportfolio & 0.0 & 0.27 & 0.27 & 0.29 & 0.16 \\
    prod2 & 0.0 & 0.26 & 0.3 & 0.3 & 0.14 \\
    r50x360 & 0.05 & 0.25 & 0.3 & 0.3 & 0.15 \\
    ran13x13 & -0.01 & 0.26 & 0.29 & 0.3 & 0.15 \\
    supportcase20 & 0.19 & 0.26 & 0.3 & 0.29 & 0.15 \\
    supportcase26 & 0.01 & 0.26 & 0.27 & 0.3 & 0.16 \\
    swath & -0.0 & 0.26 & 0.29 & 0.31 & 0.14 \\
    tanglegram6 & -0.0 & 0.25 & 0.28 & 0.35 & 0.11 \\
    timtab1CUTS & -0.0 & 0.25 & 0.3 & 0.3 & 0.15 \\
    tr12-30 & 0.62 & 0.26 & 0.3 & 0.3 & 0.15 \\
    usAbbrv-8-25\_70 & 0.47 & 0.25 & 0.28 & 0.33 & 0.14 
\end{tabularx}
\end{center}
\end{minipage}
\bigskip
\caption{Per instance results of Experiment \ref{subsec:random_seed} (Random seed). Primal-dual refers to the relative primal-dual difference improvement. $\{\lambda_{1}, \lambda_{2}, \lambda_{3}, \lambda_{4}\}$ are the multipliers for \texttt{dcd}, \texttt{eff}, \texttt{isp}, and \texttt{obp}.}
\label{tab:per_instance_random}
\end{table}
\begin{table}
\scriptsize
\begin{minipage}{0.5\textwidth}
\begin{center}
\begin{tabularx}{\textwidth}{l@{\hspace{0.5\tabcolsep}}S@{\hspace{0.5\tabcolsep}}S@{\hspace{0.5\tabcolsep}}S@{\hspace{0.5\tabcolsep}}S@{\hspace{0.5\tabcolsep}}S}
Instance & \text{primal-dual} & $\lambda_{1}$ & $\lambda_{2}$ & $\lambda_{3}$ & $\lambda_{4}$\\
\hline
    22433 & 0.03 & 0.6 & 0.12 & 0.18 & 0.1 \\
    23588 & -0.0 & 0.6 & 0.12 & 0.18 & 0.1 \\
    50v-10 & -0.01 & 0.6 & 0.12 & 0.18 & 0.1 \\
    a1c1s1 & 0.2 & 0.6 & 0.12 & 0.18 & 0.1 \\
    a2c1s1 & 0.28 & 0.6 & 0.12 & 0.18 & 0.1 \\
    app3 & 0.15 & 0.6 & 0.12 & 0.18 & 0.1 \\
    b1c1s1 & 0.28 & 0.6 & 0.12 & 0.18 & 0.1 \\
    b2c1s1 & 0.27 & 0.6 & 0.12 & 0.18 & 0.1 \\
    beasleyC1 & 0.06 & 0.6 & 0.12 & 0.18 & 0.1 \\
    beasleyC2 & -0.04 & 0.6 & 0.12 & 0.18 & 0.1 \\
    berlin & 0.03 & 0.6 & 0.12 & 0.18 & 0.1 \\
    berlin\_5\_8\_0 & 0.0 & 0.6 & 0.12 & 0.18 & 0.1 \\
    bg512142 & 0.0 & 0.6 & 0.12 & 0.18 & 0.1 \\
    bienst1 & 0.11 & 0.6 & 0.12 & 0.18 & 0.1 \\
    bienst2 & 0.03 & 0.6 & 0.12 & 0.18 & 0.1 \\
    bppc8-09 & -0.01 & 0.6 & 0.12 & 0.18 & 0.1 \\
    brasil & 0.02 & 0.6 & 0.12 & 0.18 & 0.1 \\
    dg012142 & 0.0 & 0.6 & 0.12 & 0.18 & 0.1 \\
    dws008-01 & -0.02 & 0.6 & 0.12 & 0.18 & 0.1 \\
    eil33-2 & 0.0 & 0.6 & 0.12 & 0.18 & 0.1 \\
    exp-1-500-5-5 & 0.58 & 0.6 & 0.12 & 0.18 & 0.1 \\
    fhnw-schedule-paira100 & 0.0 & 0.6 & 0.12 & 0.18 & 0.1 \\
    g200x740 & 0.58 & 0.6 & 0.12 & 0.18 & 0.1 \\
    glass4 & -0.01 & 0.6 & 0.12 & 0.18 & 0.1 \\
    gmu-35-40 & 0.0 & 0.6 & 0.12 & 0.18 & 0.1 \\
    graphdraw-domain & 0.0 & 0.6 & 0.12 & 0.18 & 0.1 \\
    graphdraw-gemcutter & 0.0 & 0.6 & 0.12 & 0.18 & 0.1 \\
    h50x2450 & 0.32 & 0.6 & 0.12 & 0.18 & 0.1 \\
    hgms-det & 0.06 & 0.6 & 0.12 & 0.18 & 0.1 \\
    ic97\_potential & -0.0 & 0.6 & 0.12 & 0.18 & 0.1 \\
    ic97\_tension & 0.08 & 0.6 & 0.12 & 0.18 & 0.1 \\
    icir97\_tension & 0.04 & 0.6 & 0.12 & 0.18 & 0.1 \\
    k16x240b & 0.08 & 0.6 & 0.12 & 0.18 & 0.1 \\
    lotsize & 0.09 & 0.6 & 0.12 & 0.18 & 0.1 \\
    mik-250-20-75-2 & 0.05 & 0.6 & 0.12 & 0.18 & 0.1 \\
    mik-250-20-75-3 & -0.03 & 0.6 & 0.12 & 0.18 & 0.1 \\
    mik-250-20-75-5 & 0.13 & 0.6 & 0.12 & 0.18 & 0.1 \\
    milo-v12-6-r2-40-1 & 0.28 & 0.6 & 0.12 & 0.18 & 0.1 \\
    milo-v13-4-3d-4-0 & 0.01 & 0.6 & 0.12 & 0.18 & 0.1 \\
    misc07 & 0.0 & 0.6 & 0.12 & 0.18 & 0.1 \\
    mkc & 0.24 & 0.6 & 0.12 & 0.18 & 0.1 \\
    n3700 & 0.07 & 0.6 & 0.12 & 0.18 & 0.1 \\
    n3707 & 0.07 & 0.6 & 0.12 & 0.18 & 0.1 
\end{tabularx}
\end{center}

\end{minipage} \hfill
\begin{minipage}{0.5\textwidth}
\begin{center}
\begin{tabularx}{\textwidth}{l@{\hspace{0.5\tabcolsep}}S@{\hspace{0.5\tabcolsep}}S@{\hspace{0.5\tabcolsep}}S@{\hspace{0.5\tabcolsep}}S@{\hspace{0.5\tabcolsep}}S}
Instance & \text{primal-dual} & $\lambda_{1}$ & $\lambda_{2}$ & $\lambda_{3}$ & $\lambda_{4}$\\
    \hline
    n370b & 0.07 & 0.6 & 0.12 & 0.18 & 0.1 \\
    n5-3 & 0.14 & 0.6 & 0.12 & 0.18 & 0.1 \\
    n7-3 & 0.05 & 0.6 & 0.12 & 0.18 & 0.1 \\
    n9-3 & 0.11 & 0.6 & 0.12 & 0.18 & 0.1 \\
    neos-1423785 & -0.01 & 0.6 & 0.12 & 0.18 & 0.1 \\
    neos-1445738 & -0.0 & 0.6 & 0.12 & 0.18 & 0.1 \\
    neos-1456979 & 0.1 & 0.6 & 0.12 & 0.18 & 0.1 \\
    neos-3046601-motu & -0.0 & 0.6 & 0.12 & 0.18 & 0.1 \\
    neos-3046615-murg & -0.0 & 0.6 & 0.12 & 0.18 & 0.1 \\
    neos-3381206-awhea & 0.0 & 0.6 & 0.12 & 0.18 & 0.1 \\
    neos-3426085-ticino & -0.01 & 0.6 & 0.12 & 0.18 & 0.1 \\
    neos-3530905-gaula & -0.01 & 0.6 & 0.12 & 0.18 & 0.1 \\
    neos-3627168-kasai & 0.06 & 0.6 & 0.12 & 0.18 & 0.1 \\
    neos-4333464-siret & -0.12 & 0.6 & 0.12 & 0.18 & 0.1 \\
    neos-4387871-tavua & 0.0 & 0.6 & 0.12 & 0.18 & 0.1 \\
    neos-4650160-yukon & 0.0 & 0.6 & 0.12 & 0.18 & 0.1 \\
    neos-4736745-arroux & -0.04 & 0.6 & 0.12 & 0.18 & 0.1 \\
    neos-4954672-berkel & 0.19 & 0.6 & 0.12 & 0.18 & 0.1 \\
    neos-5076235-embley & 0.13 & 0.6 & 0.12 & 0.18 & 0.1 \\
    neos-5107597-kakapo & -0.0 & 0.6 & 0.12 & 0.18 & 0.1 \\
    neos-5260764-orauea & 0.01 & 0.6 & 0.12 & 0.18 & 0.1 \\
    neos-5261882-treska & 0.05 & 0.6 & 0.12 & 0.18 & 0.1 \\
    neos-631517 & 0.06 & 0.6 & 0.12 & 0.18 & 0.1 \\
    neos-691058 & 0.41 & 0.6 & 0.12 & 0.18 & 0.1 \\
    neos-860300 & -0.02 & 0.6 & 0.12 & 0.18 & 0.1 \\
    neos16 & 0.0 & 0.6 & 0.12 & 0.18 & 0.1 \\
    newdano & -0.03 & 0.6 & 0.12 & 0.18 & 0.1 \\
    nexp-150-20-1-5 & 0.27 & 0.6 & 0.12 & 0.18 & 0.1 \\
    nexp-150-20-8-5 & 0.75 & 0.6 & 0.12 & 0.18 & 0.1 \\
    p200x1188c & 0.0 & 0.6 & 0.12 & 0.18 & 0.1 \\
    p500x2988 & -0.0 & 0.6 & 0.12 & 0.18 & 0.1 \\
    pg & 0.17 & 0.6 & 0.12 & 0.18 & 0.1 \\
    pg5\_34 & 0.16 & 0.6 & 0.12 & 0.18 & 0.1 \\
    probportfolio & 0.01 & 0.6 & 0.12 & 0.18 & 0.1 \\
    prod2 & -0.02 & 0.6 & 0.12 & 0.18 & 0.1 \\
    r50x360 & 0.06 & 0.6 & 0.12 & 0.18 & 0.1 \\
    ran13x13 & 0.01 & 0.6 & 0.12 & 0.18 & 0.1 \\
    supportcase20 & 0.26 & 0.6 & 0.12 & 0.18 & 0.1 \\
    supportcase26 & 0.02 & 0.6 & 0.12 & 0.18 & 0.1 \\
    swath & -0.0 & 0.6 & 0.12 & 0.18 & 0.1 \\
    tanglegram6 & 0.0 & 0.6 & 0.12 & 0.18 & 0.1 \\
    timtab1CUTS & -0.0 & 0.6 & 0.12 & 0.18 & 0.1 \\
    tr12-30 & 0.73 & 0.6 & 0.12 & 0.18 & 0.1 \\
    usAbbrv-8-25\_70 & 0.72 & 0.6 & 0.12 & 0.18 & 0.1  
\end{tabularx}
\end{center}
\end{minipage}
\bigskip
\caption{Per instance results of Experiment \ref{subsec:smac} (Standard Learning Method - SMAC). Primal-dual refers to the relative primal-dual difference improvement. $\{\lambda_{1}, \lambda_{2}, \lambda_{3}, \lambda_{4}\}$ are the multipliers for \texttt{dcd}, \texttt{eff}, \texttt{isp}, and \texttt{obp}.}
\label{tab:per_instance_smac}
\end{table}
\begin{table}
\scriptsize
\begin{minipage}{0.5\textwidth}
\begin{center}
\begin{tabularx}{\textwidth}{l@{\hspace{0.5\tabcolsep}}S@{\hspace{0.5\tabcolsep}}S@{\hspace{0.5\tabcolsep}}S@{\hspace{0.5\tabcolsep}}S@{\hspace{0.5\tabcolsep}}S}
Instance & \text{primal-dual} & $\lambda_{1}$ & $\lambda_{2}$ & $\lambda_{3}$ & $\lambda_{4}$\\
\hline
    22433 & 0.12 & 0.3 & 0.21 & 0.27 & 0.21 \\
    23588 & 0.01 & 0.31 & 0.21 & 0.27 & 0.22 \\
    50v-10 & 0.01 & 0.39 & 0.16 & 0.23 & 0.22 \\
    a1c1s1 & 0.22 & 0.37 & 0.12 & 0.27 & 0.24 \\
    a2c1s1 & 0.34 & 0.37 & 0.12 & 0.27 & 0.24 \\
    app3 & 0.18 & 0.37 & 0.13 & 0.26 & 0.24 \\
    b1c1s1 & 0.31 & 0.37 & 0.12 & 0.27 & 0.24 \\
    b2c1s1 & 0.29 & 0.37 & 0.12 & 0.27 & 0.24 \\
    beasleyC1 & 0.03 & 0.6 & 0.08 & 0.24 & 0.07 \\
    beasleyC2 & -0.08 & 0.59 & 0.09 & 0.25 & 0.07 \\
    berlin & 0.03 & 0.58 & 0.1 & 0.24 & 0.08 \\
    berlin\_5\_8\_0 & 0.0 & 0.32 & 0.16 & 0.3 & 0.22 \\
    bg512142 & -0.0 & 0.36 & 0.13 & 0.27 & 0.23 \\
    bienst1 & -0.03 & 0.36 & 0.13 & 0.26 & 0.25 \\
    bienst2 & 0.04 & 0.37 & 0.13 & 0.26 & 0.24 \\
    bppc8-09 & -0.01 & 0.39 & 0.19 & 0.21 & 0.21 \\
    brasil & 0.02 & 0.58 & 0.1 & 0.24 & 0.08 \\
    dg012142 & 0.0 & 0.36 & 0.13 & 0.27 & 0.23 \\
    dws008-01 & -0.01 & 0.38 & 0.17 & 0.23 & 0.22 \\
    eil33-2 & 0.0 & 0.61 & 0.39 & 0.0 & 0.0 \\
    exp-1-500-5-5 & 0.62 & 0.39 & 0.12 & 0.27 & 0.21 \\
    fhnw-schedule-paira100 & 0.01 & 0.35 & 0.16 & 0.26 & 0.24 \\
    g200x740 & 0.56 & 0.55 & 0.11 & 0.24 & 0.1 \\
    glass4 & -0.01 & 0.37 & 0.2 & 0.23 & 0.2 \\
    gmu-35-40 & 0.01 & 0.39 & 0.22 & 0.19 & 0.19 \\
    graphdraw-domain & 0.0 & 0.32 & 0.19 & 0.28 & 0.21 \\
    graphdraw-gemcutter & 0.01 & 0.32 & 0.19 & 0.28 & 0.21 \\
    h50x2450 & 0.4 & 0.43 & 0.15 & 0.23 & 0.19 \\
    hgms-det & 0.18 & 0.36 & 0.18 & 0.24 & 0.23 \\
    ic97\_potential & -0.0 & 0.37 & 0.14 & 0.26 & 0.23 \\
    ic97\_tension & 0.08 & 0.34 & 0.16 & 0.26 & 0.24 \\
    icir97\_tension & 0.08 & 0.3 & 0.19 & 0.26 & 0.25 \\
    k16x240b & 0.03 & 0.52 & 0.11 & 0.25 & 0.11 \\
    lotsize & 0.09 & 0.36 & 0.15 & 0.25 & 0.23 \\
    mik-250-20-75-2 & -0.02 & 0.33 & 0.22 & 0.2 & 0.25 \\
    mik-250-20-75-3 & 0.02 & 0.33 & 0.22 & 0.2 & 0.25 \\
    mik-250-20-75-5 & 0.01 & 0.33 & 0.22 & 0.2 & 0.25 \\
    milo-v12-6-r2-40-1 & 0.23 & 0.37 & 0.14 & 0.26 & 0.23 \\
    milo-v13-4-3d-4-0 & -0.0 & 0.39 & 0.12 & 0.27 & 0.22 \\
    misc07 & 0.02 & 0.57 & 0.32 & 0.1 & 0.01 \\
    mkc & 0.18 & 0.39 & 0.21 & 0.19 & 0.2 \\
    n3700 & 0.07 & 0.41 & 0.16 & 0.23 & 0.21 \\
    n3707 & 0.08 & 0.41 & 0.16 & 0.23 & 0.2
\end{tabularx}
\end{center}

\end{minipage} \hfill
\begin{minipage}{0.5\textwidth}
\begin{center}
\begin{tabularx}{\textwidth}{l@{\hspace{0.5\tabcolsep}}S@{\hspace{0.5\tabcolsep}}S@{\hspace{0.5\tabcolsep}}S@{\hspace{0.5\tabcolsep}}S@{\hspace{0.5\tabcolsep}}S}
Instance & \text{primal-dual} & $\lambda_{1}$ & $\lambda_{2}$ & $\lambda_{3}$ & $\lambda_{4}$\\
    \hline
    n370b & 0.08 & 0.41 & 0.16 & 0.23 & 0.2 \\
    n5-3 & 0.07 & 0.43 & 0.13 & 0.24 & 0.19 \\
    n7-3 & 0.08 & 0.44 & 0.13 & 0.25 & 0.19 \\
    n9-3 & 0.07 & 0.44 & 0.13 & 0.24 & 0.19 \\
    neos-1423785 & -0.01 & 0.37 & 0.12 & 0.27 & 0.24 \\
    neos-1445738 & -0.0 & 0.46 & 0.13 & 0.23 & 0.18 \\
    neos-1456979 & 0.1 & 0.36 & 0.2 & 0.23 & 0.22 \\
    neos-3046601-motu & -0.0 & 0.44 & 0.24 & 0.16 & 0.16 \\
    neos-3046615-murg & -0.0 & 0.44 & 0.24 & 0.16 & 0.16 \\
    neos-3381206-awhea & 0.0 & 0.28 & 0.23 & 0.23 & 0.25 \\
    neos-3426085-ticino & 0.0 & 0.28 & 0.23 & 0.25 & 0.25 \\
    neos-3530905-gaula & 0.01 & 0.28 & 0.24 & 0.24 & 0.25 \\
    neos-3627168-kasai & 0.06 & 0.43 & 0.12 & 0.26 & 0.18 \\
    neos-4333464-siret & -0.08 & 0.47 & 0.18 & 0.24 & 0.11 \\
    neos-4387871-tavua & -0.0 & 0.43 & 0.17 & 0.24 & 0.15 \\
    neos-4650160-yukon & 0.0 & 0.39 & 0.15 & 0.25 & 0.21 \\
    neos-4736745-arroux & -0.02 & 0.27 & 0.22 & 0.26 & 0.25 \\
    neos-4954672-berkel & 0.18 & 0.36 & 0.14 & 0.25 & 0.25 \\
    neos-5076235-embley & 0.01 & 0.39 & 0.13 & 0.25 & 0.23 \\
    neos-5107597-kakapo & -0.0 & 0.36 & 0.15 & 0.26 & 0.23 \\
    neos-5260764-orauea & 0.01 & 0.61 & 0.28 & 0.08 & 0.03 \\
    neos-5261882-treska & 0.06 & 0.38 & 0.2 & 0.21 & 0.2 \\
    neos-631517 & 0.05 & 0.35 & 0.2 & 0.24 & 0.21 \\
    neos-691058 & 0.4 & 0.37 & 0.16 & 0.25 & 0.22 \\
    neos-860300 & -0.04 & 0.59 & 0.31 & 0.1 & 0.0 \\
    neos16 & 0.0 & 0.3 & 0.21 & 0.27 & 0.22 \\
    newdano & -0.0 & 0.37 & 0.13 & 0.26 & 0.24 \\
    nexp-150-20-1-5 & 0.27 & 0.59 & 0.09 & 0.26 & 0.05 \\
    nexp-150-20-8-5 & 0.56 & 0.4 & 0.2 & 0.21 & 0.19 \\
    p200x1188c & 0.0 & 0.57 & 0.11 & 0.24 & 0.08 \\
    p500x2988 & 0.03 & 0.56 & 0.11 & 0.24 & 0.09 \\
    pg & 0.23 & 0.35 & 0.14 & 0.25 & 0.25 \\
    pg5\_34 & 0.15 & 0.42 & 0.12 & 0.26 & 0.2 \\
    probportfolio & 0.01 & 0.43 & 0.24 & 0.15 & 0.18 \\
    prod2 & 0.0 & 0.64 & 0.28 & 0.08 & 0.0 \\
    r50x360 & 0.03 & 0.54 & 0.11 & 0.24 & 0.11 \\
    ran13x13 & 0.04 & 0.45 & 0.14 & 0.23 & 0.18 \\
    supportcase20 & 0.36 & 0.53 & 0.11 & 0.24 & 0.12 \\
    supportcase26 & 0.01 & 0.43 & 0.16 & 0.22 & 0.19 \\
    swath & -0.0 & 0.63 & 0.29 & 0.08 & 0.0 \\
    tanglegram6 & -0.0 & 0.51 & 0.3 & 0.19 & 0.0 \\
    timtab1CUTS & 0.0 & 0.36 & 0.15 & 0.27 & 0.23 \\
    tr12-30 & 0.69 & 0.36 & 0.15 & 0.26 & 0.24 \\
    usAbbrv-8-25\_70 & 0.72 & 0.31 & 0.17 & 0.3 & 0.22 
\end{tabularx}
\end{center}
\end{minipage}
\bigskip
\caption{Per instance results of Experiment \ref{subsec:adaptive_root} (Learning Adaptive Parameters). Primal-dual refers to the relative primal-dual difference improvement. $\{\lambda_{1}, \lambda_{2}, \lambda_{3}, \lambda_{4}\}$ are the multipliers for \texttt{dcd}, \texttt{eff}, \texttt{isp}, and \texttt{obp}.}
\label{tab:per_instance_adaptive_root}
\end{table}

\end{document}